\documentclass[a4paper]{article}

\usepackage{stackengine}
\stackMath

\usepackage{enumitem}
\usepackage{rotating}
\newcommand{\invertediota}{\begin{sideways}\begin{sideways}$\iota$\end{sideways}\end{sideways}}

\usepackage{pdflscape}
\usepackage{mathabx}
\usepackage{philex}
\usepackage{latexsym}

\usepackage{amsthm}
\usepackage{bussproofs}
\usepackage{url}
\usepackage{pxfonts}
\usepackage{microtype}

\usepackage{natbib}

\newtheorem{theorem}{Theorem}
\newtheorem{lemma}{Lemma}

\newtheorem{corollary}{Corollary}
\newtheorem{conjecture}{Conjecture}

\begin{document}
\title{Normalisation for Positive Free Logics without and with Definite Descriptions}

\author{Nils K\"urbis}

\date{}
\maketitle

\begin{center}
Published in the \emph{Review of Symbolic Logic}
\url{https://doi.org/10.1017/S1755020326101166}\bigskip
\end{center}

\begin{abstract}
\noindent This paper proves normalisation theorems for intuitionist and classical positive free logic, without and with the $\invertediota$ operator for definite descriptions `the $F$'. Positive free logic also opens a number of options for rules for $\invertediota$. In total, six different formalisations of theories of definite descriptions will be discussed, three proposed by Lambert, and three alternatives. The latter are motivated by considerations relating to proof-theoretic harmony between introduction and elimination rules. The philosophical importance of the various systems and results is indicated. The paper builds on Kürbis \citeyearpar{kurbisiotanegfreelogic}, but is largely self-contained. The proofs for the present systems are easier than those for negative free logic. 
\end{abstract}


\section{Introduction}
In classical and intuitionist logic, names are assumed to refer and the domain of quantification must contain at least one object. Universally free logic abandons both assumptions. The quantifiers, however, carry existential commitment. Positive free logic arises from a restriction of the quantifiers of classical or intuitionist logic to a predicate interpreted as `exists' or alternatively to instantiating terms that refer. $Ft$ can only be concluded from $\forall xFx$ if $t$ exists or if the name `$t$' refers. Similarly, $\exists xFx$ can only be concluded from $Ft$ if $t$ exists or if the name `$t$' refers. Positive free logic is distinct from negative free logic in that it permits that sentences containing terms that do not refer may nonetheless be true or assertible. This is reflected in a slightly different theory of identity, in fact just the standard theory, and the absence of a rule that lays down that atomic formulas can only be true or assertible if all terms in them refer. Positive free logic arises more directly from classical or intuitionist logic than negative free logic: the only difference is the restriction of the quantifiers. Indeed, the first system of free logic, formalised by Ja\'skowski \citeyearpar{jaskowskirules}, was a positive free logic.\footnote{For overviews, see \citep{bencivengahandbook},  \citep{morschersimonsfreelogic}, \citep{lehmannmorefreelogic}. Scott introduced intuitionist free logic \citep{scottidentityexistence}.} 

Kürbis \citeyearpar{kurbisiotanegfreelogic} proved a normalisation theorem for systems of natural deduction for negative free logic without and with definite descriptions. The present paper does the same for positive free logic. The hard work has been done in the previous paper. The present paper follows its steps and adapts its proofs to suit positive free logic. With fewer rules and a simpler theory of identity than negative free logic, the proofs are simpler. The maximal segments specific to negative free logic either do not arise in positive free logic or such segments are not maximal. The rules for the $\invertediota$ operator for definite descriptions require slight modifications, and it has one rule less. The rest, the rules for the quantifiers and connectives, stay the same. These differences require slight modifications of the proofs and raise philosophical issues on which I'll briefly comment in the following pages. 

A few words on the motivation of positive free logic are in order. Hintikka, Lambert and many free logicians prefer positive over negative free logic. It has also established itself as a logic for quantified modal logic.\footnote{See, e.g., \cite[293ff]{hughescresswell} and \cite[240ff]{garsonmodallogic}. \cite{mendelsohnfitting} use what is effectively a positive free logic that excludes the empty domain of quantification.} Hintikka and Lambert developed free logic with an eye on formalising theories of definite descriptions that are alternatives to Russell's in order to avoid some of what they perceived to be shortcomings of his theory. (See \citep{russellondenoting}, \citep[Introduction, Ch. 3, and Part I, Sec. B, $\ast 14$]{russellwhitehead}, \cite[ch. XVI]{russellintromathphil}, \citep{hintikkaexential}, \citep{hintikkatowardsdefdesc}, \citep{lambertnotesEII}, \citep{lambertnotesEIII}.) A definite description is an expression of the form `the $F$'. According to Russell, a sentence `The $F$ is $G$' is to be analysed as `There is exactly one $F$ and it is $G$'. Hence `the $F$' only appears to be a singular term, an expression purporting to refer to a unique object, as in fact it disappears upon analysis. Outside the context of a complete sentence, says Russell, `the $F$' means nothing \citep[70]{russellwhitehead}. An immediate consequence of Russell's theory is that `The $F$ is $G$' is false, if there is no or more than one $F$. 

Hintikka and Lambert held that definite descriptions should be treated as genuine singular terms and the existence assumptions made by Russell's theory avoided. A definite description `the $F$' is \emph{proper} if there is exactly one $F$, \emph{improper} if there is none or more than one. Hintikka and Lambert held that for the proper definite descriptions, Russell is right: if `the $F$' is proper, `The $F$ is $G$' means that there is exactly one $F$ and it is $G$. They disagreed with Russell over the improper ones: on these, logic should remain almost completely silent. (\cite[83]{hintikkatowardsdefdesc}, \cite[417]{bencivengahandbook}, \cite[69f]{lambertfoundations}) It is not for logic to decide whether `The $F$ is $G$' is true or false if `the $F$' is improper: it may well be that conventions of use determine some such sentences to be true, while Russell's analysis declares them all to be false. 

Negative free logic treats definite descriptions as genuine singular terms, but it is still rather Russellian: when $G$ is an atomic predicate, `The $F$ is $G$' is equivalent to its Russellian analysis. I am not going to assess which of the two theories of definite descriptions is preferable in this paper, but its philosophical discussions will contain material on which such an assessment could draw. 

Systems of natural deduction consist of introduction rules for the derivation of formulas containing the logical expressions and elimination rules for the derivation of consequences of such formulas. A normalisation theorem establishes that any formula that is the conclusion of an introduction rule and major premise of an elimination rule for its main operator may be removed from deductions: these are unnecessary detours, as Gentzen put it \cite[177]{gentzenuntersuchungen}. Normalisation has philosophical significance. According to Dummett, if rules of inference satisfy certain criteria, amongst which normalisation, then they define the meaning of the expression they govern \cite[Ch. 11-13]{dummettLBM}. Normalisation is a necessary condition for rules to count as \emph{harmonious}. (Cf. also Prawitz's Inversion Principle \citep{prawitznaturaldeduction}.) The view is mostly applied to sentence forming operators and quantifiers, with the exception of Tennant (\cite[Sec. 7.10]{tennantnatural}, \citep{tennantabstraction}), who applies it also to term forming operators, amongst which $\invertediota$. The present paper, as the previous one, is a contribution to broadening the proof-theoretic account of the meanings of logical expressions. It will be seen that considerations of harmony point to an imbalance in the rules that formalise Lambert's theory of definite descriptions and suggest alternative rules. In section 8 of this paper alternative theories that are motivated by such considerations are investigated.\footnote{Proof theoretic semantics originates from a fertile remark of Gentzen's \citep[\S 5.13, p.189]{gentzenuntersuchungen}, and was developed in detail by Dummett (in particular in \citep{dummettLBM}), building on Prawitz's normalisation results \citep{prawitznaturaldeduction}. For a brief overview, see \citep{kurbisPTSNM}, for a more detailed survey, see \citep{schroederheistersemantics}.}

\section{Positive Free Logic without and with $\invertediota$}\label{positivefreelogic}
The language is standard, and as in \citep[Sec. 2.1]{kurbisiotanegfreelogic}, and so is the definition of deduction and $\vdash_S$, which essentially follows Troelstra and Schwichtenberg \citeyearpar[Sec. 2.1.1]{troelstraschwichtenberg}. Recall that $A_t^x$ is the formula that results when $x$ is replaced in $A$ by $t$ and that $\exists !$ is the existence predicate. The systems modify rules given by Tennant for negative free logic to suit positive free logic (\cite[Sec. 7.10]{tennantnatural}, \citep{tennantabstraction}). The system \textbf{IPF} of intuitionist positive free logic has the following rules: 

\begin{center}
\AxiomC{$A$} 
\AxiomC{$B$}
\LeftLabel{$(\land  I)$ \ }
\BinaryInfC{$A\land  B$}
\DisplayProof\qquad\qquad 
\AxiomC{$A\land  B$}
\LeftLabel{$(\land  E)$ \ }
\UnaryInfC{$A$}
\DisplayProof\qquad 
\AxiomC{$A\land  B$} 
\UnaryInfC{$B$}
\DisplayProof

\bigskip

\AxiomC{$[A]^i$}
\noLine
\UnaryInfC{$\Pi$}
\noLine
\UnaryInfC{$B$}
\RightLabel{$_i$}
\LeftLabel{$(\rightarrow I)$ \ }
\UnaryInfC{$A\rightarrow B$}
\DisplayProof \qquad \qquad
\AxiomC{$A\rightarrow B$}
\AxiomC{$A$}
\LeftLabel{$(\rightarrow E)$ \ }
\BinaryInfC{$B$}
\DisplayProof\qquad\qquad
\AxiomC{$\bot$}
\LeftLabel{$(\bot E)$ \ }
\UnaryInfC{$B$}
\DisplayProof

\bigskip

\AxiomC{$A$}
\LeftLabel{$(\lor I)$ \ }
\UnaryInfC{$A\lor B$}
\DisplayProof\qquad
\AxiomC{$B$}
\UnaryInfC{$A\lor B$}
\DisplayProof\qquad\qquad
\AxiomC{$A\lor B$}
\AxiomC{$[A]^i$}
\noLine
\UnaryInfC{$\Pi$}
\noLine
\UnaryInfC{$C$}
\AxiomC{$[B]^{j}$}
\noLine
\UnaryInfC{$\Sigma$}
\noLine
\UnaryInfC{$C$}
\RightLabel{$_{i, j}$}
\LeftLabel{$(\lor E)$ \ }
\TrinaryInfC{$C$}
\DisplayProof

\bigskip

\AxiomC{$[\exists !a]^i$}
\noLine
\UnaryInfC{$\Pi$}
\noLine
\UnaryInfC{$A_a^x$}
\LeftLabel{$(\forall I)$ \ }
\RightLabel{$_i$}
\UnaryInfC{$\forall x A$}
\DisplayProof\qquad\qquad
\AxiomC{$\forall xA$}
\AxiomC{$\exists !t$}
\LeftLabel{$(\forall E)$ \ }
\BinaryInfC{$A_t^x$}
\DisplayProof

\bigskip

\AxiomC{$A_t^x$}
\AxiomC{$\exists !t$}
\LeftLabel{$(\exists I)$ \ }
\BinaryInfC{$\exists x A$}
\DisplayProof\qquad\qquad
\AxiomC{$\exists xA$}
\AxiomC{$[A_a^x]^i \ [\exists !a]^j$}
\noLine
\UnaryInfC{$\Pi$}
\noLine
\UnaryInfC{$C$}
\RightLabel{$_{i, j}$}
\LeftLabel{$(\exists E)$ \ }
\BinaryInfC{$C$}
\DisplayProof
\end{center} 

\noindent where in $(\forall I)$ and $(\exists E)$ $a$ does not occur in $\forall xA$, $\exists xA$, $C$, nor in any undischarged assumptions of $\Pi$ except for those in the assumption classes of $A_a^x$ or of $\exists ! a$. 

\begin{center}
\AxiomC{$(=I) \quad \overline{\ t=t\ }$}
\DisplayProof\qquad\qquad
\AxiomC{$t_1=t_2$}
\AxiomC{$A_{t_1}^x$}
\LeftLabel{$(= E)$ \ } 
\BinaryInfC{$A_{t_2}^x$}
\DisplayProof
\end{center}

\noindent Classical positive free logic $\mathbf{CPF}$ has the rules for $\rightarrow$, $\forall$ and $=$ of $\mathbf{IPF}$ and $(\bot E)$ is replaced by: 

\begin{prooftree}
\AxiomC{$[\neg A]^i$}
\noLine
\UnaryInfC{$\Pi$}
\noLine
\UnaryInfC{$\bot$}
\RightLabel{$_i$}
\LeftLabel{$(\bot E_C) \ $}
\UnaryInfC{$A$}
\end{prooftree}

\noindent Vacuous discharged being permitted, $(\bot E)$ is a special case of $(\bot E_C)$, and such applications of the latter are treated as if they were applications of the former. 

Comparing $\mathbf{IPF}$ and $\mathbf{CPF}$ to $\mathbf{INF}$ and $\mathbf{CNF}$ of \citep{kurbisiotanegfreelogic}, note the absence of the rule $(AD)$ of atomic denotation\footnote{$(AD)$: From $Rt_1\ldots t_n\vdash \exists !t_i$, where $R$ is an $n$-place predicate letter (other than $\exists !$) or identity and $1\leq i\leq n$.} and of the premise $\exists !t$ in $(=I)$\footnote{$(=I^n)$: $\exists !t\vdash t=t$.}. This means sequences of application of these rules that gave rise to segments that count as maximal in negative free logic cannot arise (maximal $=$-, $\exists!$- and the first two cases of maximal $(=E)$-segments \citep[Sec. 3.3]{kurbisiotanegfreelogic}). The absence of $(AD)$ also means that sequences of applications of $(=E)$ with minor premises formed from $\exists !$ and a term cannot be removed from deductions and hence are not detours and do not count as maximal. This has some philosophical interest, on which I comment at the end of this section. 

The $\invertediota$ operator is governed by the following rules:\footnote{They modify Tennant's rules and are equivalent to those given by \cite[Ch. 18]{garsonmodallogic}.}

\begin{prooftree} 
\AxiomC{$\exists ! t$}
\AxiomC{$[a=t]^i \ [\exists ! a]^k$}
\noLine
\UnaryInfC{$\Xi$}
\noLine
\UnaryInfC{$F_a^x$}
\AxiomC{$[F_a^x]^j \ [\exists !a]^k$}
\noLine
\UnaryInfC{$\Pi$}
\noLine
\UnaryInfC{$a=t$}
\LeftLabel{$(\invertediota I)$ \ }
\RightLabel{$_{i, j, k}$}
\TrinaryInfC{$\invertediota xF=t$}
\end{prooftree}

\noindent where $a$ is neither free in $F$ nor in $t$ and does not occur in any undischarged assumptions in $\Xi$ and $\Pi$ except those in the assumption classes displayed.

\begin{center} 
\AxiomC{$\invertediota xF=t$}
\AxiomC{$u=t$}
\AxiomC{$\exists !u$}
\LeftLabel{$(\invertediota E_1)$ \ } 
\TrinaryInfC{$F_u^x$}
\DisplayProof

\bigskip

\AxiomC{$\invertediota xF=t$}
\AxiomC{$F_u^x$}
\AxiomC{$\exists ! t$}
\AxiomC{$\exists !u$}
\LeftLabel{$(\invertediota E_2)$ \ }
\QuaternaryInfC{$u=t$}
\DisplayProof
\end{center} 

\noindent Compared to the rules for $\invertediota$ of \citep{kurbisiotanegfreelogic}, note first the existence formulas that may be above the middle premise of $(\invertediota I)$ and that are added as premises in $(\invertediota E_1)$ and $(\invertediota E_2)$. This is all due to the absence of $(AD)$: in negative free logic, these formulas may be derived from identities that occur in the rules. Secondly, $\invertediota$ does not have the third elimination rule that in negative free logic is a special case of $(AD)$. This, too, has some philosophical interest, on which I'll comment at the end of this section. 

$\mathbf{IPF}^{\invertediota}$ is $\mathbf{IPF}$ with $\invertediota$ and its rules added. Similarly for $\mathbf{CPF}^{\invertediota}$. 

Lambert formalises his minimal theory of definite descriptions by adding the following axiom to \textbf{CPF} (\citep[30]{lambertEThe}, \cite[90f]{lambertfoundations}): 

\lbp{LA}{$LA$}{$\forall z (\invertediota xA=z\leftrightarrow \forall y(A_y^x\leftrightarrow y=z))$}

\noindent Call the systems that arise from $\mathbf{IPF}$ and $\mathbf{CPF}$ by adding \rf{LA} $\mathbf{IPF}^{LA}$ and $\mathbf{CPF}^{LA}$. 

\rf{LA} is equivalent to the rules for $\invertediota$ above, showing that they are indeed the intended ones for positive free logic: 

\begin{lemma}\label{LAandrules}
\rf{LA} and the rules for $\invertediota$ are interderivable in $\mathbf{IPF}$. 
\end{lemma}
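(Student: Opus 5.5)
The proof is by direct derivation in both directions, taking $A\leftrightarrow B$ as $(A\rightarrow B)\land(B\rightarrow A)$. It is simplest to work with the rules as stated for an arbitrary formula $F$ and take $A$ in \rf{LA} to be that formula.

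\emph{From the rules to \rf{LA}.} Let $b$ be fresh and assume $\exists !b$; this assumption will be discharged at the end by $(\forall I)$ on $z$.

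For the left-to-right conditional, assume $\invertediota xA=b$ and, for a further fresh $c$, assume $\exists !c$.
\begin{itemize}
\item From an assumption $A_c^x$, $(\invertediota E_2)$ with premises $\invertediota xA=b$, $A_c^x$, $\exists !b$, $\exists !c$ gives $c=b$.
\item From an assumption $c=b$, $(\invertediota E_1)$ with premises $\invertediota xA=b$, $c=b$, $\exists !c$ gives $A_c^x$.
\end{itemize}
Two applications of $(\rightarrow I)$ and $(\land I)$ give $A_c^x\leftrightarrow c=b$. Then $(\forall I)$ discharges $\exists !c$ and yields $\forall y(A_y^x\leftrightarrow y=b)$, and $(\rightarrow I)$ discharges $\invertediota xA=b$.

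For the right-to-left conditional, assume $\forall y(A_y^x\leftrightarrow y=b)$ and apply $(\invertediota I)$ with major premise $\exists !b$.
\begin{itemize}
\item Subderivation $\Xi$: from $[\exists !a]$ and $(\forall E)$ obtain $A_a^x\leftrightarrow a=b$. With $[a=b]$, $(\land E)$ and $(\rightarrow E)$ give $A_a^x$.
\item Subderivation $\Pi$: symmetrically, from $[A_a^x]$ and $[\exists !a]$ obtain $a=b$.
\end{itemize}
The eigenvariable condition on $a$ holds because $a$ is fresh. This gives $\invertediota xA=b$, and $(\rightarrow I)$ then discharges the universal assumption. Combining both conditionals, $(\forall I)$ discharges $\exists !b$ and gives \rf{LA}.

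\emph{From \rf{LA} to the rules.}

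For $(\invertediota I)$: from $\exists !t$ and \rf{LA}, $(\forall E)$ gives $\invertediota xF=t\leftrightarrow\forall y(F_y^x\leftrightarrow y=t)$. It remains to derive $\forall y(F_y^x\leftrightarrow y=t)$. Under the assumption $\exists !a$, the given $\Xi$ (discharging $a=t$) and $\Pi$ (discharging $F_a^x$) yield, by $(\rightarrow I)$ and $(\land I)$, $F_a^x\leftrightarrow a=t$. One application of $(\forall I)$ then discharges the $\exists !a$ assumptions of both $\Xi$ and $\Pi$ together. This is legitimate because the side condition of $(\invertediota I)$ is exactly the eigenvariable condition of $(\forall I)$. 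Finally $(\rightarrow E)$ gives $\invertediota xF=t$.

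For $(\invertediota E_2)$: with $\exists !t$ available, instantiate \rf{LA} at $t$ and apply $(\rightarrow E)$ to $\invertediota xF=t$ to get $\forall y(F_y^x\leftrightarrow y=t)$. Then $(\forall E)$ with $\exists !u$ gives $F_u^x\leftrightarrow u=t$, and from $F_u^x$ we obtain $u=t$.

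For $(\invertediota E_1)$ there is a complication, since no premise $\exists !t$ is given. I expect this to be the only real obstacle, and I would handle it with the identity rules:
\begin{itemize}
\item First derive $t=u$ from $u=t$, using $(=E)$ with $A$ as $x=u$ and minor premise $u=u$ from $(=I)$.
\item Then apply $(=E)$ to $t=u$ and $\exists !u$, which gives... wait, this rewrites in the wrong direction; instead apply $(=E)$ to $u=t$ and $\exists !u$ directly, which gives $\exists !t$.
\end{itemize}
So the symmetry step is not even needed. Now proceed as for $(\invertediota E_2)$: instantiate \rf{LA} at $t$, obtain $F_u^x\leftrightarrow u=t$ using $\exists !u$, and conclude $F_u^x$ from $u=t$.

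Everything else is routine bookkeeping of discharges and freshness conditions.
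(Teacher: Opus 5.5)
Your proof is correct and takes essentially the paper's route: the same direct derivations in both directions via $(\forall E)$/$(\forall I)$ and the biconditional rules. Your derivation of $\exists !t$ from $u=t$ and $\exists !u$ by $(=E)$ in the case of $(\invertediota E_1)$ spells out exactly what the paper passes over as ``equally straightforward''; in a write-up, just delete the aborted symmetry step.
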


\begin{proof} It is useful to appeal to rules for the biconditional: 

\begin{center}
\AxiomC{$[B]^i$}
\noLine
\UnaryInfC{$\Pi$}
\noLine
\UnaryInfC{$A$}
\AxiomC{$[A]^j$}
\noLine
\UnaryInfC{$\Pi$}
\noLine
\UnaryInfC{$B$}
\RightLabel{$_{i, j}$}
\LeftLabel{$(\leftrightarrow I)$ \ }
\BinaryInfC{$A\leftrightarrow B$}
\DisplayProof\quad
\AxiomC{$A\leftrightarrow B$}
\AxiomC{$B$}
\LeftLabel{$(\leftrightarrow E_1)$ \ }
\BinaryInfC{$A$}
\DisplayProof\quad
\AxiomC{$A\leftrightarrow B$}
\AxiomC{$A$}
\LeftLabel{$(\leftrightarrow E_2)$ \ }
\BinaryInfC{$B$}
\DisplayProof
\end{center}

\noindent (a) We have:\bigskip

\noindent{\small
\AxiomC{$\forall y(A_y^x\leftrightarrow y=b)$}
\AxiomC{$\exists !a$}
\LeftLabel{$_{(\forall E)}$}
\BinaryInfC{$A_a^x\leftrightarrow a=b$}
\AxiomC{$a=b$}
\LeftLabel{$_{(\leftrightarrow E_1)}$}
\BinaryInfC{$A_a^x$}
\DisplayProof\quad
\AxiomC{$\forall y(A_y^x\leftrightarrow y=b)$}
\AxiomC{$\exists !a$}
\LeftLabel{$_{(\forall E)}$}
\BinaryInfC{$A_a^x\leftrightarrow a=b$}
\AxiomC{$A_a^x$}
\LeftLabel{$_{(\leftrightarrow E_2)}$}
\BinaryInfC{$a=b$}
\DisplayProof
}
\bigskip

\noindent So by $(\invertediota I)$: $(\ast)$ \ $\exists !b, \forall y(A_y^x\leftrightarrow y=b)\vdash \invertediota xA=b$. Furthermore:\bigskip

{\footnotesize
\AxiomC{$\invertediota xA=b$}
\AxiomC{$[a=b]^1$}
\AxiomC{$[\exists !a]^3$}
\LeftLabel{$_{(\invertediota E_1)}$}
\TrinaryInfC{$A_a^x$}
\AxiomC{$\invertediota xA=b$}
\AxiomC{$[A_a^x]^2$}
\AxiomC{$\exists ! b$}
\AxiomC{$[\exists !a]^3$}
\LeftLabel{$_{(\invertediota E_2)}$}
\QuaternaryInfC{$a=b$}
\RightLabel{$_{1, 2}$}
\LeftLabel{$_{(\leftrightarrow I)}$}
\BinaryInfC{$A_a^x\leftrightarrow a=b$}
\LeftLabel{$_{(\forall I)}$}
\RightLabel{$_3$}
\UnaryInfC{$\forall y(A_y^x\leftrightarrow y=b)$}
\DisplayProof
}

\bigskip

\noindent So by $(\leftrightarrow I)$ from $(\ast)$: $\exists !b\vdash \invertediota xA=b\leftrightarrow\forall y(A_y^x\leftrightarrow y=b)$. $(LA)$ follows by $(\forall I)$. 

\noindent (b) Given deductions $\Xi$ of $A_a^x$ from $a=t$ and $\exists !a$ and $\Sigma$ of $a=t$ from $A_a^x$ and $\exists !a$ satisfying the condition of $(\invertediota I)$, derive $\forall y(A_y^x\leftrightarrow y=t)$ by $(\leftrightarrow I)$ and $(\forall I)$, discharging the assumptions listed. Apply $(\forall E)$ with $\exists !t$ to derive $\invertediota xA=t\leftrightarrow \forall y(A_y^x\leftrightarrow y=t)$ from $(LA)$, apply $(\leftrightarrow E_1)$ to derive $\invertediota xA=t$. The derivability of $(\invertediota E_1)$ and $(\invertediota E_2)$ is equally straightforward. 
\end{proof}

\begin{theorem}
$\Gamma\vdash_{\mathbf{IPF}^{LA}} A$ iff $\Gamma\vdash_{\mathbf{IPF}^{\invertediota}} A$, and $\Gamma\vdash_{\mathbf{CPF}^{LA}} A$ iff $\Gamma\vdash_{\mathbf{CPF}^{\invertediota}} A$. 
\end{theorem}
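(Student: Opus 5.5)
The plan is to obtain the theorem directly from Lemma~\ref{LAandrules} by transforming deductions in each direction, using induction on the construction of deductions. The two systems share all rules for the connectives, quantifiers and identity. They differ only in that one has the axiom \rf{LA} and the other has the rules $(\invertediota I)$, $(\invertediota E_1)$, $(\invertediota E_2)$. So it is enough to show that each distinguishing ingredient can be replaced by a deduction in the other system, keeping the same undischarged assumptions. The same argument covers the classical case, since $(\bot E_C)$ belongs to both $\mathbf{CPF}^{LA}$ and $\mathbf{CPF}^{\invertediota}$. Lemma~\ref{LAandrules}, proved in $\mathbf{IPF}$, also holds in $\mathbf{CPF}$, because every rule of $\mathbf{IPF}$ used there is available in $\mathbf{CPF}$ or derivable in it.

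Left to right: take a deduction of $A$ from $\Gamma$ in $\mathbf{IPF}^{LA}$. Replace each occurrence of \rf{LA} as an axiom by the closed deduction of \rf{LA} from part (a) of the proof of Lemma~\ref{LAandrules}. That deduction uses only the $\invertediota$ rules, $\mathbf{IPF}$ rules, and the biconditional rules. I take $\leftrightarrow$ to be defined as a conjunction of two conditionals, so its rules are derivable from those for $\land$ and $\rightarrow$. The result is a deduction in $\mathbf{IPF}^{\invertediota}$ with no new open assumptions.

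Right to left: proceed by induction on the deduction in $\mathbf{IPF}^{\invertediota}$, replacing every application of an $\invertediota$ rule by its derivation from \rf{LA} given in part (b). For $(\invertediota I)$, the subdeductions $\Xi$ and $\Pi$, already translated by the induction hypothesis, are combined by $(\leftrightarrow I)$ and $(\forall I)$ to give $\forall y(F_y^x\leftrightarrow y=t)$. Then $(\forall E)$ with the premise $\exists!t$ instantiates \rf{LA} at $t$, and $(\leftrightarrow E_1)$ yields $\invertediota xF=t$. For $(\invertediota E_1)$, instantiate \rf{LA} at $t$; this needs $\exists!t$. Then use $(\leftrightarrow E_2)$ to get $\forall y(F_y^x\leftrightarrow y=t)$, $(\forall E)$ with $\exists!u$, and $(\leftrightarrow E_1)$ with $u=t$. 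The case $(\invertediota E_2)$ is analogous and uses its premise $\exists!t$ directly.

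I expect the main obstacle to be $(\invertediota E_1)$, which has no premise $\exists!t$, yet the instantiation of \rf{LA} needs one. My first attempt is to derive $\exists!t$ from the premises $u=t$ and $\exists!u$ by $(=E)$, taking $A$ to be $\exists!x$. This is available, since $(=E)$ is unrestricted in positive free logic.

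The remaining check concerns the eigenvariable conditions in $(\invertediota I)$ versus $(\forall I)$. The parameter $a$ must not occur in open assumptions other than $a=t$, $F_a^x$ and $\exists!a$, and must not be free in $F$ or $t$. These conditions are exactly what $(\forall I)$ requires after $(\leftrightarrow I)$ discharges $a=t$ and $F_a^x$. Where clashes arise, the parameters are renamed in the usual way.
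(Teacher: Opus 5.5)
Your proposal is correct and follows the paper's approach. The paper says only that the theorem is immediate from Lemma~\ref{LAandrules}. You spell out the routine translation: each occurrence of \rf{LA} is replaced by its closed derivation, and each $\invertediota$ rule by its derivation from \rf{LA}. You also make explicit the detail the paper leaves as ``equally straightforward'', namely that for $(\invertediota E_1)$ the $\exists!t$ needed to instantiate \rf{LA} comes from $u=t$ and $\exists!u$ by $(=E)$.
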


\begin{proof}
Immediate from Lemma \ref{LAandrules}. 
\end{proof}

\noindent $\mathbf{CPF}^{\invertediota}$ is equivalent to Lambert's minimal free theory of definite descriptions $\mathbf{MFD}$. It contains, so to speak, only what is generally agreed upon: if a unique $A$ exists, $\invertediota xA$ refers to it.\footnote{With a caveat, a differing view, possibly more suitable to Meinongians, is proposed in \citep{kurbisiotasequentII}: only uniqueness, not existence, is demanded. Then the right to left hand side of \rf{LA} fails: that there is a unique existing $A$ would not suffice for $\invertediota xA$ to refer to it, as there may also be non-existing $A$s. Kürbis follows Russell, however, in not treating definite descriptions as genuine singular terms. This is the caveat.} If this is not the case, $\mathbf{MFD}$ says nothing specifically to do with definite descriptions about $\invertediota xA$, although it does, of course, say things about $\invertediota xA$ that follow from the laws of positive free logic without $\invertediota$. According to Lambert, $\mathbf{MFD}$ is too minimal as `an important principle evidently not deducible from [\rf{LA}] is the principle of \emph{Cancellation}' \citep[30]{lambertEThe}: 

\lbp{C}{$CA$}{$\invertediota x (x=t)=t$}

\noindent Lambert's system $\mathbf{FD1}$ results from adding \rf{C} to $\mathbf{MFD}$ (\citep[44]{lambertfreedef}, \citep[90f]{lambertfoundations}). 

The strongest of Lambert's systems, $\mathbf{FD2}$, is formalised by adding the following axiom to $\mathbf{CPF}$ (\citep[87]{lambertnotesEIV}, \citep[237]{fraassenlambert}):\footnote{\label{FD2equiv}Lambert also uses the version $\invertediota xA=t\leftrightarrow \forall x (x=t\leftrightarrow (A\land \forall y(A_y^x\rightarrow y=x)))$. They are equivalent as $\exists !a\vdash (A_a^x\land \forall y(A_y^x\rightarrow y=a))\leftrightarrow\forall y(A_y^x\leftrightarrow y=a)$.}

\lbp{FD2}{$FD2$}{$\invertediota xA=t\leftrightarrow \forall x (x=t\leftrightarrow \forall y(A_y^x\leftrightarrow y=x))$}

\noindent \rf{FD2} implies both \rf{LA} and \rf{C}, but is not implied by them. A consequence of \rf{FD2} is 

\lbp{CO}{$CO$}{$(\neg \exists ! t_1\land \neg \exists !t_2)\rightarrow t_1=t_2$}

\noindent Semantically speaking, $\mathbf{FD2}$ has the consequence that, if improper definite descriptions are declared to refer to something, then they all to refer to the same thing.\footnote{In the standard dual domain semantics for positive free logic, \rf{FD2} has the consequence that there is only one object in the outer domain to which all terms refer that do not refer in the inner domain.} Similarly for constants. This is Carnap's \emph{chosen object theory} of `non-denoting' terms, a version of which was also endorsed by Frege in some of his writings. (See \citep[41]{fregesinnbedeutung} and \citep[\S 7, \S 8]{carnapmeaningnecessity}.).\footnote{See \cite{kurbisdefdescr} for overview.} 

After establishing normalisation results for $\mathbf{IPF}$, $\mathbf{CPF}$, $\mathbf{IPF}^{\invertediota}$ and $\mathbf{CPF}^{\invertediota}$, I'll consider $\mathbf{FD1}$ and $\mathbf{FD2}$ and their intuitionist versions. I will then also consider three alternatives. They raise philosophical questions, on which I shall comment briefly in the next and later sections.

\section{Philosophical Comments}
The differences in the rules of positive and negative free logic merit philosophical reflection, but I'll only hint at some thoughts here.\bigskip

\noindent 1. Sequences of applications of $(=E)$ the minor premises of which are existence formulas (maximal $(=E)$-segments of \citep{kurbisiotanegfreelogic}) cannot be removed from deductions in positive free logic. Whereas, due to the normalisation theorem, in negative free logic, only the last two terms in such a sequence must denote for the conclusion of the deduction to be derivable from (some of) its open assumptions, in positive free logic, this is the case for all terms in the sequence. This is a little surprising. From the semantic perspective usually taken by free logicians, positive free logic permits truth without reference even in the case of atomic formulas, in particular identities. Negative free logic does not. In general, negative free logic requires more existence assumptions for truth than positive free logic. Here we have a situation where it is the other way round. When the terms are $\invertediota$ terms, they may even involve predicates that do not occur in the conclusion nor in undischarged assumptions above the last major premise of $(=E)$ in the sequence: thus in positive free logic, the proof requires more conceptual resources that the corresponding proof in negative free logic.\bigskip

\noindent 2. The lacking third elimination rule of $\invertediota$ introduces a certain misbalance that is problematic from the perspective of proof-theoretic semantics and the requirement of harmony. Harmony consists in a balance between introduction and elimination rules for a logical expression. Roughly, the grounds for deriving a formula with a logical expression required by its introduction rules can be retrieved from the formula by its elimination rules. In the rules for $\invertediota$ above, one such ground, the premise $\exists !t$ of $(\invertediota I)$, cannot be retrieved from $\invertediota xF=t$ by an elimination rule. According to proof-theoretic semantics, harmony is necessary for rules governing an expression to define its meaning. It may be that the misbalance can be justified independently, for instance from the nature of reference, or it may be concluded that $\invertediota$ is not an expression the meaning of which is defined by its rules of inference. I'm not going to investigate whether whether such an independent justification can be given. Instead, assuming that the meaning of $\invertediota$ may be defined by the rules governing it, I shall consider two options of remedying the misbalance now and there'll be further reflections on this topic later in this paper.\footnote{\label{perfectbalance}A perfect balance between $(\invertediota I)$ and $(\invertediota E_2)$ suggests permitting the discharge of $\exists !t$ in the right subdeduction. However, $\exists !t$ is derivable from $\exists !a$ and $a=t$, so it is unnecessary to permit its discharge, as it may be derived.}

One option would be to add the third elimination rule for $\invertediota$ of negative free logic to positive free logic, i.e. $\invertediota xF=t\vdash \exists !t$. However, not only would this be against the spirit of positive free logic, it would also be inconsistent. The motivation for positive free logic is that, for which reasons ever, formulas containing constants may be true or assertible even if some terms occurring in them do not refer. This is supposed to extend to singular terms in general. With the additional rule, for any definite description $\invertediota xF$, it follows that $\exists !\invertediota xF$, as from $(=I)$ $\invertediota xF=\invertediota xF$. And so, replacing $t$ and $u$ by $\invertediota xF$ in $(\invertediota E_1)$, $F(\invertediota xF)$, for any $F$. Now replace $F$ by an inconsistent predicate, say $Ax\land\neg Ax$. Then $A(\invertediota x(Ax\land \neg Ax))\land \neg A(\invertediota x(Ax\land \neg Ax))$. Contradiction.

The second option results in an alternative theory of definite descriptions: do not require the premise $\exists !t$ in $(\invertediota I)$ and instead adopt the rule: 

\begin{prooftree} 
\AxiomC{$[a=t]^i \ [\exists ! a]^k$}
\noLine
\UnaryInfC{$\Xi$}
\noLine
\UnaryInfC{$F_a^x$}
\AxiomC{$[F_a^x]^j \ [\exists !a]^k$}
\noLine
\UnaryInfC{$\Pi$}
\noLine
\UnaryInfC{$a=t$}
\LeftLabel{$(\invertediota I')$ \ }
\RightLabel{$_{i, j, k}$}
\BinaryInfC{$\invertediota xF=t$}
\end{prooftree}

\noindent where $a$ is neither free in $F$ nor in $t$ and does not occur in any undischarged assumptions in $\Xi$ and $\Pi$ except those in the assumption classes displayed.\bigskip

\noindent The resulting theory is no longer equivalent to Lambert's. \rf{LA} is derivable from $(\invertediota I')$, $(\invertediota E_1)$ and $(\invertediota E_2)$, but $(\invertediota I')$ is not derivable from \rf{LA}. The new set of rules is equivalent to two axioms: 

\lbp{LA1}{$HA^{rl}$}{For any term $t$: $\forall y(A_y^x\leftrightarrow t=y)\rightarrow\invertediota xA=t$}

\lbp{LA2}{$LA^{lr}$}{$\forall z (\invertediota xA=z\rightarrow \forall y(A_y^x\leftrightarrow z=y))$}

\noindent The label \rf{LA1} was chosen because this axiom is the right to left direction of an axiom I'm naming after Hintikka in a later section. 

\rf{LA1} says that if $t$ is the unique existing $A$, then it is the $A$. An easy consequence is $\forall x(A\leftrightarrow x=t)\vdash \exists xA\leftrightarrow \exists !t$, hence $\forall x(A\leftrightarrow x=t), \exists !t\vdash A_t^x$. Furthermore, letting $A$ be $x=t$, $\vdash\invertediota x (x=t)=t$, which is the second axiom of Lambert's preferred free theory of definite descriptions, on which more below. 

Call the theory of definite descriptions axiomatised by \rf{LA1} and \rf{LA2} $\mathbf{FDA}$. In section 8 of this paper it is observed that a normalisation theorem for the system resulting by replacing $(\invertediota I)$ by $(\invertediota I')$ can be proved.\bigskip

\noindent 3. Questions regarding proof-theoretic semantics and harmony also motivate the discussion of systems of natural deduction equivalent to Lambert's $\mathbf{FD1}$ and $\mathbf{FD2}$ and two further alternative theories of definite descriptions later in this paper. One will be based on the axiom of Hintikka's already mentioned, the other on an axiom that contained a typo in a paper of Lambert's. While harmony is problematic in the former two systems, the rules of the latter two do appear to be in harmony. This raises a number of philosophical questions.

\section{Simplifying Lemmas and Preliminaries}
I'll repeat definitions, lemmas, theorems and corollaries, the latter without proof, from \citep{kurbisiotanegfreelogic} for convenience, distinguishing them by an \textbf{N} from those of the present paper. 

Definitions 1 and 2 (of $\vdash_S$) of \citep{kurbisiotanegfreelogic} stay the same. Hintikka's Law  $\exists !t\leftrightarrow \exists x \ x=t$ is derivable in positive free logic. As their proofs do not appeal to $(AD)$ or $(=I^n)$, Lemmas 1 and 2 of \citep{kurbisiotanegfreelogic} carry over to $\mathbf{IPF}$, $\mathbf{IPF}^{\invertediota}$ and $\mathbf{CPF}$, and as before, in $\mathbf{CPF}^{\invertediota}$, $(\bot E_C)$ can be restricted only to atomic, but not prime, conclusions.\medskip 

\noindent \textbf{Definition 1N}. \emph{Prime formulas} are those formed from parameters and constants by predicate letters, $\exists!$ or =. \emph{Atomic formulas} are those formed from terms by these expressions. 

\noindent \textbf{Lemma 1N}. $(=E)$ may be restricted to atomic conclusions. 

\noindent \textbf{Lemma 2N}. $(\bot E)$ may be restricted to prime conclusions. 

\medskip 

\noindent Applications of $(=E)$, $(\bot E)$ and $(\bot E_C)$ are from now of assumed to be restricted as stated. 

Definitions 3, 4, 5 and 6 carry over unchanged, and those of Definition 5 are all the maximal segments that need to be considered. Definitions 12 and 13 stay the same, too.\medskip

\noindent \textbf{Definition 3N}. The \emph{major premise} of an elimination rule is the formula that displays the connective or $\invertediota$ in the general statement of the rule, here always the leftmost premise. All others are \emph{minor premises}. 

\noindent \textbf{Definition 4N}. A \emph{segment} is a sequence of formulas $C_1\ldots C_n$ such that $C_1$ is not the conclusion of $(\lor E)$ or $(\exists E)$, $C_n$ is not a minor premise of $(\lor E)$ or $(\exists E)$, and if $n>1$ then for all $i<n$, $C_i$ is a minor premise of $(\lor E)$ or $(\exists E)$, $C_{i+1}$ the conclusion. $n$ is the \emph{length} of the segment. 

\noindent \textbf{Definition 5N}. A segment is \emph{maximal} if its last formula is the major premise of an elimination rule, and if its length is $1$, it is the conclusion of an introduction rule. 

\noindent \textbf{Definition 6N}. (a) If $A$ is a prime formula $Rt_1\ldots t_n$, then $d(A)=0$ if $R$ is a predicate letter, and $d(A)=1$ if $R$ is $\exists !$ or $=$. (b) $d(t)=0$, if $t$ is an atomic term, $d(F)+1$ if $t$ is $\invertediota xF$. (c) If $A$ is an atomic formula $Rt_1\ldots t_n$, then $d(A)=d(t_1)+\ldots d(t_n)$ if $R$ is a predicate letter, and $d(A)=d(t_1)+\ldots d(t_n)+1$ if $R$ is $\exists!$ or $=$. (d) If $A$ is: 

(i) $\bot$, then $d(A)=1$; 

(ii) $\neg B$, then $d(A)=d(B)+1$; 

(iii) $B\land C$, $B\supset C$ or $B\lor C$, then $d(A)=d(B)+d(C)+1$; 

(iv) $\forall xB$ or $\exists xB$, then $d(A)=d(B)+2$

\noindent \textbf{Definition 12N}. A deduction is in \emph{normal form} if it contains no maximal segments.

\noindent \textbf{Definition 13N}. The \emph{rank} of a deduction is the pair $\langle d, l\rangle$, where $d$ is the highest degree of a maximal I/E-segment or $0$ if there is none, and $l$ is the sum of the lengths of I/E maximal segments of highest degree. $\langle d, l\rangle < \langle d', l'\rangle$ iff either (i) $d<d'$ or (ii) $d=d'$ and $l<l'$. 
\medskip

\noindent Maximal I/E-segments are those arising from introduction and elimination rules, which are all the maximal segments in positive free logic. 

Definitions 7 to 16, Lemma 3 and Corollaries 1 to 6 to Theorem 1, the normalisation theorem for $\mathbf{INF}$, of \citep{kurbisiotanegfreelogic} were concerned with maximal segments specific to negative free logic, the form of normal deduction and the subformula propery. For brevity I won't consider the subformula property in the present paper, but, due to the standard theory of identity and the absence of $(AD)$, it is rather simpler in positive than in negative free logic. As before it would be possible reformulate the systems in Ja\'skowski-style to improve on the subformula property \citep[Sec. 5]{kurbisiotanegfreelogic}. Doing so for $\mathbf{CPF}$ would result in a system equivalent to Ja\'skowski's \citeyearpar{jaskowskirules}. I won't consider this option here.

\section{Normalisation for Positive Free Logic without Definite Descriptions}
Normalisation for deductions in $\mathbf{IPF}$ follows straightforwardly from normalisation for those in $\mathbf{INF}$ \citep[Sec. 4.1]{kurbisiotanegfreelogic}. The reduction procedures for the propositional connectives and the quantifiers are the same, the former as given by Prawitz, the latter repeated here for convenience. Replace the inferences on the left by those on the right: 

\begin{center}
\AxiomC{$[\exists !a]^i$}
\noLine
\UnaryInfC{$\Pi$}
\noLine
\UnaryInfC{$A_a^x$}
\UnaryInfC{$\forall xA$}
\AxiomC{$\Sigma$}
\noLine
\UnaryInfC{$\exists !t$}
\RightLabel{$_i$}
\BinaryInfC{$A_t^x$}
\DisplayProof\qquad$\leadsto$\qquad
\AxiomC{$\Sigma$}
\noLine
\UnaryInfC{$[\exists !t]$}
\noLine
\UnaryInfC{$\Pi_t^a$}
\noLine
\UnaryInfC{$A_t^x$}
\DisplayProof

\bigskip

\AxiomC{$\Xi$}
\noLine
\UnaryInfC{$A_t^x$}
\AxiomC{$\Sigma$}
\noLine
\UnaryInfC{$\exists !t$}
\BinaryInfC{$\exists x A$}
\AxiomC{$[A_a^x]^i\ [\exists !a]^j$}
\noLine
\UnaryInfC{$\Pi$}
\noLine
\UnaryInfC{$C$}
\RightLabel{$_{i, j}$}
\BinaryInfC{$C$}
\DisplayProof\qquad$\leadsto$\qquad
\def\defaultHypSeparation{\hskip .01in}
\AxiomC{$\Xi$}
\noLine
\UnaryInfC{$[A_t^x]$}
\AxiomC{$\Sigma$}
\noLine
\UnaryInfC{$[\exists !t]$}
\noLine
\BinaryInfC{$\Pi_t^a$}
\noLine
\UnaryInfC{$C$}
\DisplayProof
\end{center}

\noindent Applications of $(=E)$ with major premise of form $t=t$ remain vacuous and are simply deleted from deductions. 

\begin{theorem}\label{normalIPF}
Deductions in $\mathbf{IPF}$ can be brought into normal form. 
\end{theorem}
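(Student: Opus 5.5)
The plan is the standard Prawitz-style induction on the rank $\langle d, l\rangle$ of Definition 13N. I assume the restrictions of Lemmas 1N and 2N throughout: $(=E)$ has atomic conclusions and $(\bot E)$ has prime conclusions. Under these restrictions the only maximal segments are the I/E-segments of Definition 5N. Segments built from $(=E)$, or from existence formulas, are not maximal here, because $(AD)$ and $(=I^n)$ are absent. Applications of $(=E)$ with major premise $t=t$ are simply deleted, and this does not increase the rank.

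First, given a deduction $\Pi$ of rank $\langle d, l\rangle$ with $d>0$, I pick a maximal segment $\sigma$ of degree $d$. I choose it so that no maximal segment of degree $d$ stands above its last formula, and none stands above or in a minor premise of the elimination rule that has $\sigma$ as major premise. This is the usual choice of a rightmost-topmost segment.

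Next, I split into two cases.

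\emph{Length $>1$.} The last formula of $\sigma$ is the conclusion of $(\lor E)$ or $(\exists E)$. I apply the permutative conversion, moving the elimination rule up into the minor premises. This shortens $\sigma$ by one. The copied minor deductions contain no maximal segments of degree $d$, by the choice of $\sigma$, so $l$ decreases.

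\emph{Length $1$.} I apply the detour conversion. For the propositional connectives I use Prawitz's reductions. For the quantifiers I use the two reductions displayed above. The one point needing care is substitution: replacing $\Pi$ by $\Pi_t^a$ must preserve correctness, including the eigenvariable conditions and the restrictions on $(=E)$ and $(\bot E)$. It must also not raise degrees. Replacing a parameter by a term $t$ of $\mathbf{IPF}$ keeps formula degrees the same, since without $\invertediota$ all terms are atomic and so have degree $0$. The new maximal segments that can arise are of three kinds. Some are at $A_t^x$ or at $\exists!t$ where $\Sigma$ is plugged in. Others are at the conclusions $C$, $A_t^x$ where they meet the rest of the deduction. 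All of these have degree strictly less than $d$: $d(\exists xA)=d(A)+2>d(A_t^x)$ and $d(\exists!t)=1<d(\exists xA)$, and similarly for $\forall$. Hence the rank decreases.

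I expect the main obstacle to be showing that these conversions do not create new maximal segments of degree $d$. In particular, the existence premise $\exists!t$ of $(\forall E)$ and $(\exists I)$ is substituted for the assumptions $\exists!a$, and if $\Sigma$ ends in an introduction this could seem to create a new detour. It cannot, because $\exists!$ has no introduction or elimination rules in $\mathbf{IPF}$ apart from $(=E)$, and $(=E)$ segments are not maximal here. So $\exists!t$ is never part of a maximal segment. The same reasoning covers identities introduced by $(=I)$.

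Finally, since the ordering on ranks is a well-ordering, induction yields a deduction without maximal segments. This is a normal form in the sense of Definition 12N.
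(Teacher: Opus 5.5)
Your route is the paper's own. Its proof just appeals to the standard induction on $\langle d,l\rangle$, or alternatively to Theorem 1 of the earlier paper, reading $\mathbf{IPF}$-deductions as $\mathbf{INF}$-deductions. Spelled out as you do it, though, one step fails: the claim that a permutative conversion lowers $l$, and likewise the claim that deleting an $(=E)$ with major premise $t=t$ cannot raise the rank.

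The conversion also lengthens, and copies into each branch, the segment beginning at the conclusion of the rule $R$ whose major premise is the last formula of $\sigma$. In the standard argument this is harmless only because that conclusion has lower degree than $\sigma$. When $R$ is $(\lor E)$ or $(\exists E)$, it is harmless because that segment runs through a minor premise of $R$ and is excluded by your choice of $\sigma$. For the logical rules this holds, so your argument is fine while $d\geq 2$, though you should state this step.

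Under Definitions 5N and 6N as stated, however, identities have degree $1$, and $(=E)$ is an elimination whose major premise can end a maximal segment. Indeed, an $(=I)$-conclusion that is the major premise of $(=E)$ is a maximal formula, contrary to your remark about identities introduced by $(=I)$; that is why you delete such inferences at all. The conclusion of $(=E)$, or of $(\bot E)$, can again be an identity of degree $1$.

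Concretely, let both minor premises of an $(\lor E)$ be $s=r$. Let its conclusion be the major premise of an $(=E)$ with minor premise $s=q$ and conclusion $r=q$, and let $r=q$ be the major premise of a second $(=E)$; all other premises are assumptions. The only maximal segments are the two of length $2$ ending in $s=r$, so the rank is $\langle 1,4\rangle$. The only available conversion yields two maximal segments of length $2$ ending in $r=q$, again $\langle 1,4\rangle$. So no choice of $\sigma$ lowers the rank. Similarly, deleting an $(=E)$ whose major premise comes from $(=I)$ splices the segment ending in its minor premise onto the segment starting at its conclusion, which can increase $l$.

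So degree $1$ needs its own argument. One option proceeds in three steps once $d=1$. First, remove the logical degree-$1$ maximal segments (e.g.\ $P\land Q$ with $P,Q$ predicate formulas) as before; their eliminations have conclusions of degree $0$ or fall under your minor-premise clause. Second, delete every $(=E)$ whose major premise has the form $t=t$. None can reappear, since no terms are substituted any more and the remaining conversions copy formulas unchanged. Third, push $(=E)$ and $(\bot E)$ upward, measuring progress by $\sum_R N(R)\cdot 2^{c(R)}$. Here $N(R)$ counts the $(\lor E)$ and $(\exists E)$ inferences whose conclusions lie on segments ending in the major premise of $R$. $c(R)$ counts the applications of $(=E)$ further down the chain of segments through the conclusion of $R$. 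Choose $R$ with $N(R)>0$ such that every such rule in its minor deduction has $N=0$, so copying that deduction costs nothing. Each conversion then hands one $(\lor E)$ or $(\exists E)$ from $R$ to the next $(=E)$ below, and the measure strictly decreases.

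Alternatively, follow Prawitz and treat $(=E)$ as an atomic rule whose major premise never ends a maximal segment. Then your argument works as written, but the normal form obtained is weaker than Definition 5N literally demands.
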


\begin{proof}
The theorem follows by a standard induction: if suitably chosen, applying a reduction procedure reduces the rank of a deduction. The theorem also follows from Theorem 1 of \citep{kurbisiotanegfreelogic}, because a deduction in $\mathbf{IPF}$ is a deduction of $\mathbf{INF}$ in which $(AD)$ is not applied and any formula $t=t$ is either an assumption or derived by an elimination rule.
\end{proof}

\noindent Corresponding to Theorem 5 of \citep[Sec. 7]{kurbisiotanegfreelogic} is the following, which is also proved by a standard, but slightly simpler, induction over the rank of deductions: 

\begin{theorem}\label{normalCPF}
Deductions in $\mathbf{CPF}$ can be brought into normal form. 
\end{theorem}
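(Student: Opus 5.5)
We proceed as for Theorem \ref{normalIPF}, by induction over the rank $\langle d, l\rangle$ of Definition 13N, after first restricting classical reductio. $\mathbf{CPF}$ has only $\rightarrow$, $\forall$, $=$ and $\bot$ as primitives, with $\neg A$ defined as $A\rightarrow\bot$. There are therefore no maximal segments of length greater than $1$: no $(\lor E)$ or $(\exists E)$ is available, so every maximal segment is a single formula that is both the conclusion of an introduction rule and the major premise of an elimination rule.

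The first step is to show, following Prawitz and the corresponding lemma of \citep[Sec. 7]{kurbisiotanegfreelogic}, that $(\bot E_C)$ can be restricted to prime conclusions. Lemma 2N does this for $(\bot E)$, and the remark in Section 4 states that the classical case goes through in the system without $\invertediota$. The proof is by induction on the degree of the conclusion $A$ of an application of $(\bot E_C)$.
\begin{itemize}
\item If $A$ is $B\rightarrow C$, replace the application with discharged $\neg(B\rightarrow C)$ by one concluding $C$ from discharged $\neg C$. Inside it, derive $\neg(B\rightarrow C)$ from $\neg C$ and $B$ by assuming $B\rightarrow C$, applying $(\rightarrow E)$ with $B$ and then applying $\neg C$. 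Then apply $(\rightarrow I)$ discharging $B$.
\item If $A$ is $\forall xB$, assume $\exists !a$ for a fresh $a$ and conclude $B_a^x$ by $(\bot E_C)$ from discharged $\neg B_a^x$. For this, derive $\neg\forall xB$ from $\neg B_a^x$ and $\exists !a$ via $(\forall E)$, and close with $(\forall I)$ discharging $\exists !a$.
\end{itemize}
The existence premise is the only new feature. It fits because $(\forall E)$ requires $\exists !a$ and $(\forall I)$ discharges exactly such an assumption, so the eigenvariable condition is met. The new applications of $(\bot E_C)$ have conclusions of lower degree, so the induction goes through. Because $(=E)$ is already restricted to atomic conclusions by Lemma 1N, no complications arise there either.

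Next we carry out the main induction. Choose a maximal formula of highest degree such that no maximal formula of highest degree stands above it or above the minor premise of its elimination. Apply the Prawitz reduction for $\rightarrow$ or the $\forall$ reduction displayed above. In the $\forall$ case, the deduction $\Sigma$ of $\exists !t$ is substituted for the discharged assumptions $\exists !a$ in $\Pi_t^a$. New maximal formulas can arise only where $\Sigma$ or the minor deduction is plugged in, and these have lower degree. This is where the degree clause $d(\forall xB)=d(B)+2$ matters: the substituted term $t$ may raise degrees in $B_t^x$, which the definition is designed to handle. By the choice of the maximal formula, duplicating $\Sigma$ does not multiply maximal formulas of highest degree, so the rank strictly decreases.

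The main obstacle is the interaction between $(\bot E_C)$ and the remaining detours. We must check that a $(\bot E_C)$ whose conclusion is major premise of an elimination does not create an unremovable detour. The restriction to prime conclusions disposes of this, since prime formulas are never major premises of $(\rightarrow E)$ or $(\forall E)$. They can be major premises of $(=E)$, but $(=E)$ has no introduction counterpart generating maximality except $t=t$ from $(=I)$, and those vacuous applications are simply deleted. With this settled, the rest is the standard, and slightly simpler, argument referred to in the statement.
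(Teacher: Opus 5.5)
Your proposal is correct and takes essentially the paper's route: the paper notes that Lemma 2N carries over to $\mathbf{CPF}$, so $(\bot E_C)$ may be restricted to prime conclusions, and then proves the theorem by the standard rank induction (as for Theorem \ref{normalIPF}), with only $\rightarrow$- and $\forall$-detours and the vacuous $t=t$ cases of $(=E)$ to remove. Two small corrections: your restriction lemma also needs the trivial case of conclusion $\bot$ (replace the discharged $\neg\bot$ by a proof of $\bot\rightarrow\bot$ and drop the $(\bot E_C)$). Also, your remark about the clause $d(\forall xB)=d(B)+2$ is misplaced: in $\mathbf{CPF}$ every term is atomic, so $d(B_t^x)=d(B)$, and in the $\invertediota$ systems complex instantiating terms are dealt with by Lemma \ref{quantifiersrestricted}, not by the degree definition.
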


\section{Normalisation for Positive Free Logic with Definite Descriptions}
With definite descriptions, arbitrarily complex terms may be used as the instantiating terms in $(\exists I)$ and $(\forall E)$. This is a problem for normalisation, as in the reduction procedures for the quantifiers parameters are replaced by the instantiating terms, which increases the complexity of formulas if the term is complex. As in the case of negative free logic, the problem is solved by adding Indrzejczak's rule $(=I^n)$ to positive free logic \cite[Sec. 4]{andrzejcutfreefreelogic}: 

\begin{prooftree}
\AxiomC{$\exists !t$}
\AxiomC{$[a=t]^i$}
\noLine
\UnaryInfC{$\Pi$}
\noLine
\UnaryInfC{$C$}
\LeftLabel{$(=I^{nG})$}
\RightLabel{$_i$}
\BinaryInfC{$C$}
\end{prooftree}

\noindent where the parameter $a$ occurs neither in $t$, nor in $C$, nor in any open assumptions of $\Pi$ other than those of the assumption class of $a=t$.\bigskip

\noindent As Hintikka's Law is valid, the rule is derivable. $(=I)$ is not derivable from $(=I^{nG})$\footnote{I shan't prove this here, but the claim follows from the normalisation theorems: to derive $(=I)$ would require a deduction of $t=t$ from no assumptions, that is, a proof, and any proof in normal form ends in an application of an introduction rule. Hence any proof of $t=t$ ends in $(=I)$ and therefore cannot consists of anything else.}, so the systems $\mathbf{IPF}{'}^{\invertediota}$ and $\mathbf{CPF}{'}^{\invertediota}$ are  $\mathbf{IPF}^{\invertediota}$+$(=I^{nG})$ and $\mathbf{CPF}^{\invertediota}$+$(=I^{nG})$ respectively. 

The proof of Lemma 7 of \citep{kurbisiotanegfreelogic} appeals to $(AD)$, so requires a slightly different proof: 

\begin{lemma}\label{quantifiersrestricted}
Given $(=I^{nG})$, $(\forall E)$ and $(\exists I)$ may be restricted to atomic terms. 
\end{lemma}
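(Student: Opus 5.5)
The plan is to show that any application of $(\forall E)$ or $(\exists I)$ whose instantiating term $t$ is complex (an $\invertediota$-term) can be replaced by a derivation of the same conclusion from the same premises. In that derivation the quantifier rule is applied only with a fresh parameter $a$, and $a$ is then traded for $t$ by $(=I^{nG})$ and $(=E)$. Since $(=E)$ may be restricted to atomic conclusions (Lemma 1N), we cannot simply substitute $t$ for $a$ in an arbitrary formula. So the replacement will be by a derivation that reproduces the conclusion, not by one-step substitution. Unlike Lemma 7 of \citep{kurbisiotanegfreelogic}, the existence of $t$ is available directly as the minor premise $\exists !t$, so no appeal to $(AD)$ is needed.

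For $(\forall E)$ with premises $\forall xA$ and $\exists !t$, I first take a parameter $a$ new to the deduction and assume $a=t$. Next I derive $t=a$ from $a=t$: apply $(=E)$ to $a=t$ and $a=a$, the latter by $(=I)$, with $x=a$ as the atomic formula schema. From the assumption $a=t$ and $\exists !t$, I get $\exists !a$ by $(=E)$ with atomic conclusion $\exists !a$; this uses $t=a$, or directly $a=t$ read in the appropriate direction. Then $(\forall E)$ applied to $\forall xA$ and $\exists !a$ yields $A_a^x$, with atomic instantiating term.

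The real work is to get from $A_a^x$ and $t=a$ to $A_t^x$, and this is the step I expect to be the main obstacle. I will prove an auxiliary claim by induction on the complexity of $A$: for any formula $B$ and terms $s,u$, the conclusion $B_u^x$ is derivable from $s=u$ and $B_s^x$, using $(=E)$ only with atomic conclusions, together with the other rules of the system. In the atomic case a single $(=E)$ suffices. In the compound cases the formula is decomposed by its elimination rules, the induction hypothesis is applied to the components, and the formula is reassembled by introduction rules. For $\rightarrow$ the hypothesis is used in both directions, which is why symmetry of identity is derived first. For the quantifiers, $\exists !b$ for the fresh eigenvariable $b$ is discharged as usual. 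Crucially, these quantifier steps instantiate only with parameters, so the auxiliary claim itself never uses complex-term instances. The eigenvariable and substitution conditions need care: $s$ and $u$ must not contain the bound variable, and $a$ must be chosen new. This is essentially the content of Lemma 1N, so I may simply cite its proof as providing the needed derivation.

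Having $A_t^x$ depending on $a=t$, with $a$ not occurring in $A_t^x$, in $t$, or in the other open assumptions, I apply $(=I^{nG})$ with major premise $\exists !t$, discharging $a=t$, to conclude $A_t^x$. For $(\exists I)$ with premises $A_t^x$ and $\exists !t$, the argument is dual. Under the assumption $a=t$, transport $A_t^x$ to $A_a^x$ by the auxiliary claim, derive $\exists !a$ as before, and apply $(\exists I)$ with atomic term $a$ to get $\exists xA$. Then discharge $a=t$ by $(=I^{nG})$ using $\exists !t$. Applying these transformations to every offending inference, from the top of the deduction downwards, yields a deduction of the same conclusion from the same assumptions in which $(\forall E)$ and $(\exists I)$ use only atomic terms.
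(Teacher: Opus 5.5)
Your proposal is correct and follows the paper's proof. In both, you assume $a=t$ for a fresh $a$, obtain $\exists!a$ from $\exists!t$ by $(=E)$, apply $(\forall E)$ or $(\exists I)$ with $a$, transport between $a$ and $t$ by $(=E)$, and discharge $a=t$ by $(=I^{nG})$ with premise $\exists!t$. The only difference is a harmless refinement: the paper applies $(=E)$ directly to the possibly compound $A_a^x$ and uses symmetry tacitly to get $\exists!a$, whereas you route the transport through the Lemma 1N derivation and derive symmetry explicitly, which makes compatibility with the atomic restriction on $(=E)$ explicit.
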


\begin{proof}
Replace an application of $(\forall E)$ in which $t$ is complex by the following: 

\begin{prooftree}
\AxiomC{$\exists !t$}
\AxiomC{$[a=t]^i$}
\AxiomC{$\forall xA$}
\AxiomC{$[a=t]^i$}
\AxiomC{$\exists !t$}
\LeftLabel{$_{(=E)}$}
\BinaryInfC{$\exists !a$}
\LeftLabel{$_{(\forall E)}$}
\BinaryInfC{$A_a^x$}
\LeftLabel{$_{(=E)}$}
\BinaryInfC{$A_t^x$}
\LeftLabel{$_{(=I^{nG})}$}
\RightLabel{$_i$}
\BinaryInfC{$A_t^x$}
\end{prooftree}

\noindent Similarly for $(\exists I)$. 
\end{proof}

\noindent Note that the deduction contains what in negative free logic counts as a maximal segment, i.e. the application of $(=E)$: there is no way around this in positive free logic, because we cannot deduce the minor premise $\exists !a$ of $(\forall E)$ from $a=t$ alone, and no other assumptions are given from which it could be derived. 

\bigskip

\noindent \textsc{Comment}. As in the case of negative free logic, we may ask a philosophical question regarding the addition of $(=I^{nG})$. As noted in the previous paper in response to a referee's comment, this rule hides a maximal formula. In positive free logic, this may look even worse, as it is an additional rule, not one that permits economies by replacing another rule. Further discussion must be left for another occasion. 

\bigskip

\noindent As the rules for $\invertediota$ are slightly different from those for negative free logic, the reduction procedures are also slightly different. They are as follows:\bigskip 

\noindent (1) $(\invertediota I)$ followed by $(\invertediota E_1)$: 
{\footnotesize
\begin{center} 
\AxiomC{$\Sigma_1$}
\noLine
\UnaryInfC{$\exists ! t$}
\AxiomC{$[a=t]^i \ [\exists !a]^k$}
\noLine
\UnaryInfC{$\Xi$}
\noLine
\UnaryInfC{$F_a^x$}
\AxiomC{$[F_a^x]^j \ [\exists !a]^k$}
\noLine
\UnaryInfC{$\Pi$}
\noLine
\UnaryInfC{$a=t$}
\RightLabel{$_{i, j, k}$}
\TrinaryInfC{$\invertediota xF=t$}
\AxiomC{$\Sigma_2$}
\noLine
\UnaryInfC{$u=t$}
\AxiomC{$\Sigma_3$}
\noLine
\UnaryInfC{$\exists !u$}
\TrinaryInfC{$F_u^x$}
\DisplayProof
\quad $\leadsto$ \quad
\AxiomC{$\mathbin{\stackon[6pt]{[u=t]}{\Sigma_2}} \ \mathbin{\stackon[6pt]{[\exists !u]}{\Sigma_3}}$}
\alwaysNoLine
\UnaryInfC{$\Xi_u^a$}
\UnaryInfC{$F_u^x$}
\DisplayProof
\end{center}
}
\noindent (2) $(\invertediota I)$ followed by $(\invertediota E_2)$: 
{\footnotesize
\begin{center}
\AxiomC{$\Sigma_1$}
\noLine
\UnaryInfC{$\exists ! t$}
\AxiomC{$[a=t]^i \ [\exists !a]^k$}
\noLine
\UnaryInfC{$\Xi$}
\noLine
\UnaryInfC{$F_a^x$}
\AxiomC{$[F_a^x]^j \ [\exists !a]^k$}
\noLine
\UnaryInfC{$\Pi$}
\noLine
\UnaryInfC{$a=t$}
\RightLabel{$_{i, j, k}$}
\TrinaryInfC{$\invertediota xF=t$}
\AxiomC{$\Sigma_2$}
\noLine
\UnaryInfC{$F_u^x$}
\AxiomC{$\Sigma_3$}
\noLine
\UnaryInfC{$\exists ! t$}
\AxiomC{$\Sigma_4$}
\noLine
\UnaryInfC{$\exists ! u$}
\QuaternaryInfC{$u=t$}
\DisplayProof\quad$\leadsto$\quad
\alwaysNoLine
\AxiomC{$\mathbin{\stackon[6pt]{[F_u^x]}{\Sigma_2}} \ \mathbin{\stackon[6pt]{[\exists !u]}{\Sigma_4}}$}
\UnaryInfC{$\Pi_u^a$}
\UnaryInfC{$u=t$}
\DisplayProof
\end{center}
}
\bigskip

\noindent \textsc{Comment.} In the second reduction procedure the minor premise $\exists !t$ of $(\invertediota E_2)$ is not used. We could reformulate $(\invertediota I)$ so as to permit the discharge of $\exists !t$ above the rightmost subdeduction, and then the minor premise would be used in the reduction procedure. However, as $\exists !t$ follows from $a=t$ and $\exists !a$, there is no need to do so, and the rule as stated is slightly more economical.\footnote{Compare with the remark in footnote \ref{perfectbalance}.}\bigskip

\noindent As in negative free logic, we face the problem that $u$ may be complex, and so replacing it for the parameter $a$ may increase the complexity of formulas. Once more an observation of Indrzejczak's solves the problem \citep{andrzejrussellian}: the rules for $\invertediota$ may be restricted to one occurrence of an $\invertediota$ term, where it is displayed in the rules. Lemma 8 of \citep{kurbisiotanegfreelogic} appeals to $(AD)$, so the corresponding result for positive free logic requires a slightly different proof. 

\begin{lemma}\label{iotarestricted}
Given $(=I)$, $(=E)$ and $(=I^{nG})$, $(\invertediota I)$, $(\invertediota E_1)$ and $(\invertediota E_2)$ can be restricted to the single occurrence of an $\invertediota$ term where it is displayed in the rules. 
\end{lemma}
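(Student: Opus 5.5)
The plan is to show that each application of $(\invertediota I)$, $(\invertediota E_1)$ and $(\invertediota E_2)$ in which $t$ or $u$ is a complex term can be replaced by a deduction with the same premises and conclusion. In that deduction the $\invertediota$ rule is applied only with fresh parameters $b, c$ in place of $u$ and $t$. The connecting steps use $(=I)$, $(=E)$ and $(=I^{nG})$. This is the analogue of Lemma 8 of \citep{kurbisiotanegfreelogic}. There $(AD)$ was used to obtain the existence premises needed for $(=I^{nG})$. The key observation here is that, in positive free logic, these existence premises are already available in the rules themselves: $\exists !t$ is a premise of $(\invertediota I)$ and $(\invertediota E_2)$, $\exists !u$ is a premise of both elimination rules, and in $(\invertediota E_1)$ $\exists !t$ is obtained from $u=t$ and $\exists !u$ by $(=E)$.

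First I record the routine facts that symmetry and transitivity of $=$ follow from $(=I)$ and $(=E)$. Unrestricted $(=E)$ is admissible by Lemma 1N, so it may be used freely in these constructions and then restricted afterwards.

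For $(\invertediota E_1)$ I proceed as follows. From $\exists !u$, open $[b=u]$ by $(=I^{nG})$. From the derived $\exists !t$, open $[c=t]$ by $(=I^{nG})$, with $b, c$ fresh. Then:
\begin{enumerate}[label=(\roman*)]
\item obtain $\invertediota xF=c$ from $\invertediota xF=t$ and $c=t$ by $(=E)$;
\item obtain $b=c$ from $b=u$, $u=t$ and $c=t$ by symmetry and transitivity;
\item obtain $\exists !b$ from $b=u$ and $\exists !u$ by $(=E)$;
\item apply the restricted $(\invertediota E_1)$ to get $F_b^x$, and transport it to $F_u^x$ by $(=E)$;
\item discharge $[b=u]$ and $[c=t]$ by the two applications of $(=I^{nG})$, which is legitimate since $b, c$ occur neither in the conclusion nor elsewhere.
\end{enumerate}
The case of $(\invertediota E_2)$ is analogous. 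One derives $F_b^x$, $\exists !b$ and $\exists !c$ (the last from $c=t$ and $\exists !t$), concludes $b=c$ by the restricted rule, and rewrites this to $u=t$.

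For $(\invertediota I)$ with complex $t$, I use the premise $\exists !t$ to open $[c=t]$ by $(=I^{nG})$. Inside, I modify the subdeductions $\Xi$ and $\Pi$ so that they concern $c$ rather than $t$:
\begin{itemize}
\item in $\Xi$, the assumption $a=t$ is replaced by a derivation of $a=t$ from a new assumption $a=c$ and $c=t$;
\item in $\Pi$, the conclusion $a=t$ is extended to $a=c$ using $c=t$.
\end{itemize}
With $\exists !c$ derived from $c=t$ and $\exists !t$, the restricted $(\invertediota I)$ yields $\invertediota xF=c$. Then $(=E)$ with $c=t$ gives $\invertediota xF=t$, and $(=I^{nG})$ discharges $c=t$.

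The main obstacle I expect is the bookkeeping of the parameter conditions in $(\invertediota I)$. The modified $\Xi$ and $\Pi$ now depend on the open assumption $c=t$, so I must check that the eigenparameter $a$ does not occur in it. This holds because $a$ is not in $t$ and $c$ is chosen fresh. I must also make sure the assumption classes of $a=c$, $F_a^x$ and $\exists !a$ are exactly the ones discharged. A further point to check is that the terms in $F$ other than the displayed $\invertediota$ term are unaffected, so that after the transformation the only complex $\invertediota$ term in each application is the displayed one.
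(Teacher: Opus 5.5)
Your proposal is correct and follows essentially the paper's own proof. You replace the complex term by a fresh parameter, take the existence premise for $(=I^{nG})$ from the rule's own premises (deriving $\exists !t$ from $u=t$ and $\exists !u$ for $(\invertediota E_1)$), and reroute $\Xi$ and $\Pi$ through $a=c$ for $(\invertediota I)$, exactly as the paper does. The only difference is presentational: you replace $t$ and $u$ by two fresh parameters at once, whereas the paper first treats complex $t$ with atomic $u$ and then describes the extra steps needed for complex $u$.
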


\begin{proof} Double lines mark inferences by symmetry of identity, which is derivable from $(=I)$ and $(=E)$. 

\noindent (1) Replace an application of $(\invertediota I)$ where $t$ is a complex term $\invertediota yG$ by: 
{\scriptsize
\begin{prooftree}
\def\defaultHypSeparation{\hskip .05in}
\AxiomC{$\exists !\invertediota yG$}
\AxiomC{$[b=\invertediota yG]^l$}
\AxiomC{$[b=\invertediota yG]^l$}
\AxiomC{$\exists ! \invertediota yG$}
\LeftLabel{$_{(=E)}$}
\BinaryInfC{$\exists !b$}
\AxiomC{[$a=b]^i$}
\AxiomC{$[b=\invertediota yG]^l$}
\LeftLabel{$_{(=E)}$}
\BinaryInfC{$[a=\invertediota yG]$}
\noLine
\UnaryInfC{$\Xi$}
\noLine
\UnaryInfC{$F_a^x$}
\AxiomC{$[F_a^x]^j \ [\exists !a]^k$}
\noLine
\UnaryInfC{$\Pi$}
\noLine
\UnaryInfC{$a=\invertediota yG$}
\doubleLine
\UnaryInfC{$\invertediota yG=a$}
\AxiomC{$[b=\invertediota yG]^l$}
\doubleLine
\UnaryInfC{$\invertediota yG=b$}
\LeftLabel{$_{(=E)}$}
\BinaryInfC{$a=b$}
\LeftLabel{$_{(\invertediota I)}$}
\RightLabel{$_{i, j, k}$}
\TrinaryInfC{$\invertediota xF=b$}
\LeftLabel{$_{(=E)}$}
\BinaryInfC{$\invertediota xF=\invertediota yG$}
\RightLabel{$_l$}
\LeftLabel{$_{(=I^{nG})}$}
\BinaryInfC{$\invertediota xF=\invertediota yG$}
\end{prooftree} 
}
\noindent where $a$ and $b$ are fresh parameters. 

\noindent (2) Replace an application of $(\invertediota E_1)$ where $t$ is a term $\invertediota yG$ and $u$ atomic by: 
{\scriptsize
\begin{prooftree}
\def\defaultHypSeparation{\hskip .1in}
\AxiomC{$u=\invertediota yG$}
\AxiomC{$\exists !u$}
\LeftLabel{$_{(=E)}$}
\BinaryInfC{$\exists !\invertediota yG$}
\AxiomC{$\invertediota xF=\invertediota yG$}
\doubleLine
\UnaryInfC{$\invertediota yG=\invertediota xF$}
\AxiomC{$[a=\invertediota yG]^i$}
\doubleLine
\UnaryInfC{$\invertediota yG=a$}
\LeftLabel{$_{(=E)}$}
\BinaryInfC{$\invertediota xF=a$}
\AxiomC{$[a=\invertediota yG]^i$}
\doubleLine
\UnaryInfC{$\invertediota yG=a$}
\AxiomC{$u=\invertediota yG$}
\LeftLabel{$_{(=E)}$}
\BinaryInfC{$u=a$}
\AxiomC{$\exists !u$}
\LeftLabel{$_{(\invertediota E_1)}$}
\TrinaryInfC{$F_u^x$}
\RightLabel{$_i$}
\LeftLabel{$_{(=I^{nG})}$}
\BinaryInfC{$F_u^x$}
\end{prooftree}
}
\noindent where $a$ is a fresh parameter. 

If $u$ is a complex term $\invertediota zH$, replace $u$ by a fresh parameter $b$; deduce $b=\invertediota yG$ from $\invertediota zH=\invertediota yG$ and $b=\invertediota zH$, $\exists !b$ from $b=\invertediota zH$ and $\exists !\invertediota zH$, and $F_{\invertediota zH}^x$ from $b=\invertediota zH$ and $F_b^x$, all by $(=E)$; discharge $b=\invertediota zH$ by $(=I^{nG})$ with premise $\exists !\invertediota zH$. 

\noindent (3) Replace an application of $(\invertediota E_2)$ where $t$ is a term $\invertediota yG$ and $u$ atomic by: 
{\scriptsize
\begin{prooftree}
\def\defaultHypSeparation{\hskip .1in}
\AxiomC{$\exists !\invertediota yG$}
\AxiomC{$[a=\invertediota yG]^i$}
\AxiomC{$[a=\invertediota yG]^i$}
\doubleLine
\UnaryInfC{$\invertediota yG=a$}
\AxiomC{$\invertediota xF=\invertediota yG$}
\LeftLabel{$_{(=E)}$}
\BinaryInfC{$\invertediota xF=a$}
\AxiomC{$F_u^x$}
\AxiomC{$[a=\invertediota yG]^i$}
\doubleLine
\UnaryInfC{$\invertediota yG=a$}
\AxiomC{$\exists !\invertediota yG$}
\LeftLabel{$_{(=E)}$}
\BinaryInfC{$\exists !a$}
\AxiomC{$\exists ! u$}
\LeftLabel{$_{(\invertediota E_2)}$}
\QuaternaryInfC{$u=a$}
\LeftLabel{$_{(=E)}$}
\BinaryInfC{$u=\invertediota yG$}
\RightLabel{$_i$}
\LeftLabel{$_{(=I^{nG})}$}
\BinaryInfC{$u=\invertediota yG$}
\end{prooftree}
}
\noindent where $a$ is a fresh parameter. 

If $u$ is a complex term $\invertediota zH$, replace $u$ by a fresh parameter $b$; deduce $F_b^x$ from $b=\invertediota zH$ and $F_{\invertediota zH}^x$, $\exists !b$ from $b=\invertediota zH$ and $\exists !\invertediota zH$, and $\invertediota zH=\invertediota yG$ from $b=\invertediota zG$ and $b=\invertediota zH$, all by $(=E)$; discharge $b=\invertediota zH$ by $(=I^{nG})$ with premise $\exists !\invertediota zH$. 
\end{proof}

\noindent Applications of the $\invertediota$ rules in $\mathbf{IPF}{'}^{\invertediota}$ and $\mathbf{CPF}{'}^{\invertediota}$ are therefore restricted accordingly to only one occurrence of an $\invertediota$ term where it is displayed in the rules. The following is immediate from the foregoing: 

\begin{theorem}\label{equivalencerestrictions}
(a) $\Gamma\vdash_{\mathbf{IPF}^{\invertediota}} A$ iff $\Gamma\vdash_{\mathbf{IPF}{'}^{\invertediota}}A$. (b) $\Gamma\vdash_{\mathbf{CPF}^{\invertediota}} A$ iff $\Gamma\vdash_{\mathbf{CPF}{'}^{\invertediota}}A$
\end{theorem}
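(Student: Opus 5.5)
Both directions of each biconditional will be established by translating deductions between the two systems; parts (a) and (b) receive the same treatment, since the classical rule $(\bot E_C)$ plays no role in the argument and is carried along unchanged.

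For the left-to-right direction, take a deduction in $\mathbf{IPF}^{\invertediota}$ (respectively $\mathbf{CPF}^{\invertediota}$) of $A$ from $\Gamma$. All of its rules belong to the primed system except the unrestricted applications of the $\invertediota$ rules and of $(\forall E)$ and $(\exists I)$. I will work through the deduction and replace each offending application by the construction given in the proof of Lemma \ref{iotarestricted}: clause (1) for $(\invertediota I)$ with complex $t$, clause (2) for $(\invertediota E_1)$, and clause (3) for $(\invertediota E_2)$, including the sub-cases where $u$ is complex. If $(\forall E)$ and $(\exists I)$ are also to be restricted, Lemma \ref{quantifiersrestricted} is applied as well. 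Each replacement introduces only applications of $(=I)$, $(=E)$, $(=I^{nG})$ and restricted $\invertediota$ rules. It uses fresh parameters, which are discharged by $(=I^{nG})$, so the open assumptions stay among $\Gamma$ and the eigenvariable conditions of surrounding rules are untouched.

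The one point that needs care is termination, since the replacement constructions themselves contain $\invertediota$ inferences. I would proceed by induction on the number of non-restricted $\invertediota$ applications. Every $\invertediota$ inference introduced by the constructions has atomic parameters $a$, $b$ exactly where the complex terms stood, so it is already restricted. Hence each replacement strictly lowers the count, and the induction goes through.

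For the right-to-left direction, I need every rule of the primed system to be available in the unprimed one. Restricted applications of the $\invertediota$ rules are instances of the unrestricted ones, and $(=I)$, $(=E)$ and the remaining rules are shared. That leaves only $(=I^{nG})$. This rule is derivable, as noted in the paper, via Hintikka's Law $\exists !t\leftrightarrow \exists x\, x=t$. Concretely, from $\exists !t$ obtain $\exists x\, x=t$ by $(\exists I)$, using $(=I)$ for $t=t$. Then apply $(\exists E)$ with the subdeduction $\Pi$ of $C$ from $a=t$, discharging $a=t$; the unused assumption $\exists !a$ is discharged vacuously. The parameter conditions of $(=I^{nG})$ match those of $(\exists E)$, so every application of $(=I^{nG})$ can be replaced in this way, which completes the argument for both (a) and (b).
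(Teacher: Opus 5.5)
Your route is the paper's own: its proof of this theorem just cites the derivability of $(=I^{nG})$ and Lemmas \ref{quantifiersrestricted} and \ref{iotarestricted}. However, the termination argument you single out as the delicate point does not hold as stated. Several of the replacement constructions use a premise twice, so the deduction standing above that premise gets copied:
\begin{itemize}
\item $\exists!t$ in Lemma \ref{quantifiersrestricted}, used once as the left premise of $(=I^{nG})$ and once to obtain $\exists!a$;
\item $\exists!\invertediota yG$ in clauses (1) and (3) of Lemma \ref{iotarestricted};
\item $u=\invertediota yG$ and $\exists!u$ in clause (2);
\item $\exists!\invertediota zH$ in the sub-cases for complex $u$.
\end{itemize}
If a copied deduction contains $k\geq 1$ unrestricted applications, one replacement removes one and adds $k$, so your count need not go down. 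The fix is routine. Always replace a topmost offending application, i.e.\ one with no unrestricted application of the $\invertediota$ rules or of $(\forall E)$, $(\exists I)$ in the deductions of its premises. When both $t$ and $u$ are complex, compose the sub-case for $u$ with the main clause, since the sub-case alone leaves the complex $t$ in place. Alternatively, argue by induction on the height of the deduction, transforming the immediate subdeductions before the last inference.

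Second, part (b) does not get `the same treatment'. As defined in Section \ref{positivefreelogic}, $\mathbf{CPF}$ has only the rules for $\rightarrow$, $\forall$ and $=$ of $\mathbf{IPF}$, plus $(\bot E_C)$. So $(\exists I)$ and $(\exists E)$ are not primitive rules of $\mathbf{CPF}^{\invertediota}$, and your derivation of $(=I^{nG})$ is not available there. Even if you read $\exists$ as $\neg\forall\neg$, its elimination rule for arbitrary $C$ is only derivable with $(\bot E_C)$. Use the classical rule directly:
\begin{itemize}
\item assume $\neg C$, and obtain $\bot$ from it and the conclusion of $\Pi$ by $(\rightarrow E)$;
\item infer $\neg(a=t)$ by $(\rightarrow I)$, discharging $a=t$;
\item infer $\forall x\neg(x=t)$ by $(\forall I)$, discharging $\exists!a$ vacuously; the eigenvariable condition holds because $a$ occurs neither in $t$, nor in $C$, nor in the other open assumptions of $\Pi$;
\item infer $\neg(t=t)$ by $(\forall E)$ with $\exists!t$, and then $\bot$ using $(=I)$;
\item infer $C$ by $(\bot E_C)$, discharging $\neg C$.
\end{itemize}
Here $(\bot E_C)$ is essential, contrary to your opening remark. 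With these two repairs your argument is complete.
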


\begin{proof}
From the derivability of $(=I^{nG})$ and Lemmas \ref{quantifiersrestricted} and \ref{iotarestricted}. 
\end{proof}

\noindent Definition 4N  is modified as in Definition 18 of \citep{kurbisiotanegfreelogic}: insert `or $(=I^{nG})$' after $(\exists E)$. Definition 5N stays the same. 

\begin{theorem}\label{normalIPFDD}
Deductions in $\mathbf{IPF}{'}^{\invertediota}$ can be brought into normal form. 
\end{theorem}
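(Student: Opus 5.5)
The proof will be a Prawitz-style induction on the rank $\langle d, l\rangle$ of Definition 13N, following the pattern of Theorem \ref{normalIPF} and of the corresponding theorem for $\mathbf{INF}^{\invertediota}$ in \citep{kurbisiotanegfreelogic}.

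\emph{Choice of maximal segment.} Given a deduction that is not normal, I pick a maximal segment $\sigma$ of highest degree $d$ satisfying two conditions. First, no maximal segment of degree $d$ stands above the last formula of $\sigma$ or above any minor premise of the elimination whose major premise ends $\sigma$. Second, among such segments, $\sigma$ is rightmost, in the usual sense. The goal is then to show that one reduction applied at $\sigma$ strictly lowers the rank.

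\emph{Segments of length $>1$.} Here $\sigma$ passes through minor premises of $(\lor E)$, $(\exists E)$ or $(=I^{nG})$. I apply a permutative reduction that moves the elimination rule up past the last of these rules. For $(=I^{nG})$ this works exactly as for $(\exists E)$: its minor premise $C$ is simply carried upward into the subdeduction $\Pi$. The eigenparameter condition is kept by renaming $a$ if necessary. The length of $\sigma$ drops by one. The choice of $\sigma$ guarantees that the subdeductions of the minor premises, which may be duplicated, contain no segments of degree $d$, so $l$ decreases.

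\emph{Segments of length $1$.} Here I use the detour conversions.
\begin{itemize}
\item For the connectives, Prawitz's reductions apply.
\item For the quantifiers, the reductions displayed above apply. By Lemma \ref{quantifiersrestricted}, $t$ is atomic, so the substitution $\Pi_t^a$ leaves all degrees unchanged.
\item For $\invertediota$, reductions (1) and (2) apply. This is where Lemma \ref{iotarestricted} is essential. Because the rules are restricted to a single displayed occurrence of an $\invertediota$ term, in $(\invertediota E_1)$ and $(\invertediota E_2)$ the terms $u$ and $t$ are atomic. So $\Xi_u^a$ and $\Pi_u^a$ contain formulas of the same degrees as $\Xi$ and $\Pi$.
\end{itemize}
The new maximal segments the conversion can create have lower degree:
\begin{itemize}
\item $F_u^x$, $u=t$ and $\exists!u$ all have degree strictly below $d(\invertediota xF=t)=d(F)+2$.
\item Subformulas of the eliminated formula have lower degree, by Definition 6N.
\end{itemize}
The deduction $\Sigma_1$ of $\exists!t$, and $\Sigma_3$ in case (2), are simply discarded, which can only lower the count. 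Where $\Sigma_2$ or $\Sigma_3$ is copied for several occurrences of the discharged assumptions, the choice of $\sigma$ again ensures no degree-$d$ segments are duplicated. I will also check the remaining cases:
\begin{itemize}
\item Applications of $(=E)$ with major premise $t=t$ are deleted.
\item Identities are never major premises of elimination rules other than $(=E)$, whose major premise $t_1=t_2$ comes from $(=I)$ only when it is $t=t$.
\end{itemize}
So no further I/E-maximal segments arise.

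\emph{Main obstacle.} I expect the main difficulty to be checking the degree bookkeeping in the $\invertediota$ reductions and the interaction of $(=I^{nG})$ with them. Plugging $\Sigma_2$ and $\Sigma_3$ into $\Xi_u^a$ may put a conclusion of an introduction rule directly into a major-premise position inside $\Xi$. I will argue that any such new segment has degree $\le d(F_u^x)$ or $\le d(u=t)$, hence $<d$. I will also argue that the atomicity of $u$ and $t$, secured by Lemma \ref{iotarestricted}, is preserved under the substitutions performed in all reductions, since only parameters are replaced by atomic terms. With these checks in place the induction goes through, and in fact more simply than in \citep{kurbisiotanegfreelogic}, since the maximal segments specific to negative free logic do not occur.
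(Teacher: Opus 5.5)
Your proposal is correct and is essentially the paper's proof. Both run the standard induction on rank: $(=I^{nG})$ is permuted like $(\exists E)$, and Lemmas \ref{quantifiersrestricted} and \ref{iotarestricted} ensure only atomic terms are substituted, so every segment created by a quantifier or $\invertediota$ conversion has lower degree than the one removed. One slip: identities $\invertediota xF=t$ \emph{are} major premises of $(\invertediota E_1)$ and $(\invertediota E_2)$, so the correct reason no $(=I)$/$(\invertediota E_i)$ detour arises is that the restricted rules have atomic $t$, hence their major premise is never of the form $s=s$.
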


\begin{proof}
By a standard induction: applying the reduction procedures to a suitably chosen maximal segment reduces the rank of the deduction. Because of the slightly different rules for $\invertediota$, deductions in $\mathbf{IPF}{'}^{\invertediota}$ are not special cases of deductions in $\mathbf{INF}{'}^{\invertediota}$. However, the slightly different reduction procedures pose no new problems, and so the proof proceeds essentially as the proof of Theorem 4 of \citep{kurbisiotanegfreelogic}. 
\end{proof}

\noindent Concerning $\mathbf{CPF}$, Definitions 20 and 22 of \citep{kurbisiotanegfreelogic} stay the same, Definition 21 contains a redundancy `or the premise of $(AD)$', and Lemma 10 goes through as before:

\medskip

\noindent \textbf{Definition 20N}. A \emph{segment} is a sequence of formulas arising from applications of $(=I^{nG})$ as in Definition 4N or a sequence of formulas $A_1\ldots A_n$ such that $A_1$ is not the conclusion of $(\bot E_C)$, and for all $i$, $A_i$ is the minor premise of $(\rightarrow E)$ the major premise of which is discharged by $(\bot E_C)$, $A_{i+i}$ is the conclusion of that application of $(\bot E_C)$, and $A_n$ is not a minor premise of $(\rightarrow E)$ the major premise of which is discharged by $(\bot E_C)$. 

\noindent \textbf{Definition 21N}. Add the following at the end of Definition 5N of maximal segment: `or a segment the last formula of which is the conclusion of $(\bot E_C)$ and the major premise of an elimination rule or the premise of $(AD)$'. 

\noindent \textbf{Definition 22N}. An assumption discharged by $(\bot E_C)$  is \emph{regular} if it is the major premise of $(\rightarrow E)$. A proof is \emph{regular} if all assumptions discharged by $(\bot E_C)$ in it are regular. 

\noindent \textbf{Lemma 10N}. (a) Any proof can be transformed into a regular proof. (b) In a regular proof, any assumption discharged by $(\bot E_C)$ stands to the right of a formula on a $(\bot E_C)$-segment. 

\medskip

\noindent The slightly different rules for $\invertediota$ require slightly a different adaptation of Andou's method to prove normalisation \citep{andounormalisation}. The reduction procedures are as follows:\bigskip 

\noindent (1) $(\bot E_C)$ followed by $(\invertediota E_1)$:
 
{\scriptsize
\begin{center}
\def\defaultHypSeparation{\hskip .1in}
\AxiomC{$[\neg \invertediota xF=t]^i$}
\AxiomC{$\Xi$}
\noLine
\UnaryInfC{$\invertediota xF=t$}
\BinaryInfC{$\bot$}
\noLine
\UnaryInfC{$\Pi$}
\noLine
\UnaryInfC{$\bot$}
\RightLabel{$_i$}
\UnaryInfC{$\invertediota xF=t$}
\AxiomC{$\Sigma_1$}
\noLine
\UnaryInfC{$u=t$}
\AxiomC{$\Sigma_2$}
\noLine
\UnaryInfC{$\exists !u$}
\TrinaryInfC{$F_u^x$}
\DisplayProof\quad $\leadsto$\quad  
\AxiomC{$[\neg F_u^x]^j$}
\AxiomC{$\Xi$}
\noLine
\UnaryInfC{$\invertediota xF=t$}
\AxiomC{$\Sigma_1$}
\noLine
\UnaryInfC{$u=t$}
\AxiomC{$\Sigma_2$}
\noLine
\UnaryInfC{$\exists !u$}
\TrinaryInfC{$F_u^x$}
\BinaryInfC{$\bot$}
\noLine
\UnaryInfC{$\Pi$}
\noLine
\UnaryInfC{$\bot$}
\RightLabel{$_j$}
\UnaryInfC{$F_u^x$}
\DisplayProof
\end{center}
}

\noindent (2) $(\bot E_C)$ followed by $(\invertediota E_2)$: 

{\scriptsize
\begin{center}
\def\defaultHypSeparation{\hskip .05in}
\AxiomC{$[\neg \invertediota xF=t]^i$}
\AxiomC{$\Xi$}
\noLine
\UnaryInfC{$\invertediota xF=t$}
\BinaryInfC{$\bot$}
\noLine
\UnaryInfC{$\Pi$}
\noLine
\UnaryInfC{$\bot$}
\RightLabel{$_i$}
\UnaryInfC{$\invertediota xF=t$}
\AxiomC{$\Sigma_1$}
\noLine
\UnaryInfC{$F_u^x$}
\AxiomC{$\Sigma_2$}
\noLine
\UnaryInfC{$\exists !t$}
\AxiomC{$\Sigma_3$}
\noLine
\UnaryInfC{$\exists !u$}
\QuaternaryInfC{$u=t$}
\DisplayProof\ $\leadsto$\ 
\AxiomC{$[\neg u=t]^j$}
\AxiomC{$\Xi$}
\noLine
\UnaryInfC{$\invertediota xF=t$}
\AxiomC{$\Sigma_1$}
\noLine
\UnaryInfC{$F_u^x$}
\AxiomC{$\Sigma_2$}
\noLine
\UnaryInfC{$\exists !t$}
\AxiomC{$\Sigma_3$}
\noLine
\UnaryInfC{$\exists !u$}
\QuaternaryInfC{$u=t$}
\BinaryInfC{$\bot$}
\noLine
\UnaryInfC{$\Pi$}
\noLine
\UnaryInfC{$\bot$}
\RightLabel{$_j$}
\UnaryInfC{$u=t$}
\DisplayProof
\end{center}
}

\begin{theorem}\label{normalCPFDD}
Deductions in $\mathbf{CPF}{'}^{\invertediota}$ can be brought into normal form. 
\end{theorem}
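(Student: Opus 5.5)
The plan is to follow Andou's method as adapted in the proof of Theorem 5 of \citep{kurbisiotanegfreelogic}, using the reduction procedures for $(\bot E_C)$ followed by $(\invertediota E_1)$ and $(\invertediota E_2)$ just given, together with the usual Andou-style reductions for $(\bot E_C)$ followed by $(\rightarrow E)$, $(\forall E)$ and $(=E)$. We also use the intuitionist reductions from Theorem \ref{normalIPFDD} for I/E-maximal segments and the permutative reductions for segments through $(\lor E)$, $(\exists E)$ and $(=I^{nG})$. By Theorem \ref{equivalencerestrictions} and Lemmas \ref{quantifiersrestricted} and \ref{iotarestricted}, we may assume that $(\forall E)$ and $(\exists I)$ have atomic instantiating terms and that the $\invertediota$ rules act only on the displayed occurrence. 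Substituting terms for parameters therefore never increases degrees. By Lemma 2N, extended as remarked in Section 4, $(\bot E_C)$ has atomic conclusions. By Lemma 10N(a), the deduction may be taken to be regular.

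First, I will define the rank as in Definition 13N, now counting both I/E-maximal segments and $(\bot E_C)$-maximal segments in the sense of Definitions 20N and 21N. Second, I will pick a maximal segment of highest degree such that no maximal segment of that degree stands above it or above the minor premises of its last formula. For $(\bot E_C)$-segments, I will additionally choose it rightmost, which is where Lemma 10N(b) is used. Third, I will apply the appropriate reduction and check that the rank decreases.

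In the $\invertediota$ cases, the conclusion of $(\bot E_C)$ is $\invertediota xF=t$, whose degree exceeds that of $F_u^x$ and of $u=t$ because $u$ and $t$ are atomic. The new $(\bot E_C)$ conclusions $F_u^x$ and $u=t$ therefore have lower degree. The new major premise $\invertediota xF=t$ is now the conclusion of $\Xi$ and not of $(\bot E_C)$. If $\Xi$ ends in $(\invertediota I)$, a new maximal formula of the same degree arises, but it has length $1$. Following Andou, the segment lengths are arranged so that the total still drops: one $(\bot E_C)$-segment of length $\geq 1$ is traded for segments that are either shorter or of lower degree. Note that the assumption class $[\neg\invertediota xF=t]^i$ may have several members. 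Each must be treated, duplicating $\Sigma_1,\Sigma_2,\Sigma_3$. The choice of a topmost, rightmost segment guarantees that these duplicated subdeductions contain no maximal segment of highest degree.

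The main obstacle I expect is exactly this bookkeeping. We must ensure that duplicating the minor-premise deductions at each discharged $\neg\invertediota xF=t$, and creating possible new I/E maximal formulas where $\Xi$ ends in an introduction, does not increase the rank. My first attempt is to use Andou's strategy of reducing the segment whose last $(\bot E_C)$ lies lowest and rightmost, and to count an entire $(\bot E_C)$-segment by its length. The check then parallels Theorem 5 of \citep{kurbisiotanegfreelogic}. The only differences are the extra minor premises $\exists!u$ and $\exists!t$, which are atomic of degree below that of $\invertediota xF=t$, and the absence of $(AD)$. Both only remove cases, so the adapted induction on rank should go through.
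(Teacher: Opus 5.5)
Your overall strategy is the paper's: Andou's method on regular deductions (Lemma 10N), the two displayed reductions for $(\bot E_C)$ followed by $(\invertediota E_1)$ and $(\invertediota E_2)$, and an induction on rank as in the predecessor paper. The relevant result there is Theorem 7, the case with $\invertediota$, not Theorem 5. You are right to add Andou reductions for $(\rightarrow E)$ and $(\forall E)$: reduction (1) can give $(\bot E_C)$ a non-atomic conclusion $F^x_u$, so the atomic restriction is not an invariant. Those reductions lower the degree.

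One ingredient you list, however, breaks the induction: an Andou-style reduction for $(\bot E_C)$ followed by $(=E)$. For every other elimination the conclusion has lower degree than the major premise. The conclusion $A^x_{t_2}$ of $(=E)$ has no such bound in terms of $d(t_1=t_2)$. Here is a counterexample.
\begin{enumerate}
\item $(\bot E_C)$ concludes $a=b$, of degree $1$.
\item With minor premise $\invertediota xF=a$, $(=E)$ gives $\invertediota xF=b$, and this is the major premise of $(\invertediota E_1)$.
\item Before the reduction, $\invertediota xF=b$ lies on no maximal segment, since it is a conclusion of $(=E)$.
\item After pushing $(=E)$ above the $(\bot E_C)$, $\invertediota xF=b$ is the conclusion of $(\bot E_C)$ and the major premise of $(\invertediota E_1)$. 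This is a new maximal segment of degree $d(F)+2>1$.
\end{enumerate}
So if $(\bot E_C)$-segments ending in the major premise of $(=E)$ count as maximal in your rank, the rank can go up. You cannot sidestep the case either: your reduction (2) itself produces $(\bot E_C)$ concluding $u=t$, which may be exactly such a major premise.

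The fix is not to count these as maximal. Treat $(=E)$, restricted to atomic conclusions, as a rule that forms no detours with $(\bot E_C)$. This matches the paper, which supplies classical reductions only for the two $\invertediota$ eliminations. With that change, every remaining Andou reduction strictly lowers the degree of the new $(\bot E_C)$ conclusion, and your bookkeeping goes through.

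Three smaller points:
\begin{itemize}
\item When $\Xi$ ends in $(\invertediota I)$, trading the reduced $(\bot E_C)$-segment for a length-$1$ maximal formula lowers the count only because a maximal $(\bot E_C)$-segment has length at least $2$. Its first formula is not a conclusion of $(\bot E_C)$, so ``length $\geq 1$'' is not enough.
\item The selection criterion you need is the one in your second paragraph: no maximal segment of highest degree stands above the minor premises, so the duplicated $\Sigma_i$ are harmless. Choosing the segment whose $(\bot E_C)$ lies ``lowest'' is not the right criterion.
\item The permutations for $(\lor E)$ and $(\exists E)$ are vacuous in $\mathbf{CPF}{'}^{\invertediota}$. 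Its only segment-forming rules are $(=I^{nG})$ and $(\bot E_C)$.
\end{itemize}
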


\begin{proof}
From lemma 10N and a standard induction: applying the reduction procedures to suitably chosen maximal segments reduces the rank of deductions. The proof proceeds essentially as in the proof of Theorem 7 of \citep{kurbisiotanegfreelogic}. 
\end{proof}

\section{Lambert's Stronger Systems}
\subsection{FD1}
Call the systems that result from $\mathbf{IPF}{'}^{\invertediota}$ and $\mathbf{CPF}{'}^{\invertediota}$ by adding \rf{C} (with an inference line over it to indicate its derivation from no premisses) $\mathbf{IPF}{'}^{\invertediota 1}$ and $\mathbf{CPF}{'}^{\invertediota 1}$. These systems raises interesting questions regarding harmony. 

\rf{C} is an introduction rule for $\invertediota$: it licences the derivation of identities flanked by an $\invertediota$ term, as does $(\invertediota I)$. It also licences new inferences, in particular, its instances, which were not derivable in $\mathbf{IPF}{'}^{\invertediota}$ and $\mathbf{CPF}{'}^{\invertediota}$. But the new formulas are never derived at the cost of maximal formulas: 

\begin{theorem}
Deductions in $\mathbf{IPF}{'}^{\invertediota 1}$ and $\mathbf{CPF}{'}^{\invertediota 1}$ can be brought into normal form. 
\end{theorem}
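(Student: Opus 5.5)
The plan is to reduce the theorem to Theorems \ref{normalIPFDD} and \ref{normalCPFDD}. The only new ingredient is the zero-premise rule \rf{C}, which concludes $\invertediota x(x=t)=t$. The new maximal segments it can create are those whose first formula is a conclusion of \rf{C} and whose last formula is the major premise of $(\invertediota E_1)$ or $(\invertediota E_2)$. They may pass through $(\lor E)$, $(\exists E)$, $(=I^{nG})$ and, in the classical case, $(\bot E_C)$. So I will show that each such configuration can be reduced away without raising the rank. Then the standard induction on $\langle d,l\rangle$ used for the earlier theorems goes through unchanged. As before, segments of length greater than one are handled by the usual permutative reductions, and in the classical case by Andou's method via Lemma 10N. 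I only need to check that the reductions for $(\bot E_C)$ followed by $(\invertediota E_1)$ and $(\invertediota E_2)$ are insensitive to whether $\invertediota xF=t$ was obtained by \rf{C} or by $(\invertediota I)$; they are, since they never inspect the derivation $\Xi$.

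The core step is the two detour conversions where \rf{C} is immediately followed by an elimination. Here $F$ is $x=t$.

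For $(\invertediota E_1)$ the premises are $\invertediota x(x=t)=t$ (by \rf{C}), $\Sigma_1$ ending in $u=t$, and $\Sigma_2$ ending in $\exists!u$. The conclusion is $F_u^x$, i.e. $u=t$. So I replace the whole inference by $\Sigma_1$ alone.

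For $(\invertediota E_2)$ the premises are $\invertediota x(x=t)=t$, $\Sigma_1$ ending in $F_u^x$, i.e. $u=t$, and the existence premises. The conclusion is $u=t$, so again I replace the inference by $\Sigma_1$.

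In both cases the result is a subdeduction of the original, with open assumptions among the original ones. No maximal segments are created, because the conclusion formula simply gets the derivation history of a premise. The rank therefore strictly drops, or the number of such segments drops at the same degree.

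The main obstacle is compatibility with the restriction of Lemma \ref{iotarestricted}, namely that $\invertediota$ terms occur in the $\invertediota$ rules only where displayed. An instance of \rf{C} with complex $t$ violates this. I would first show \rf{C} may itself be restricted to atomic $t$. For complex $t$, the plan is to replace \rf{C} by a deduction from $\exists!t$ via $(=I^{nG})$ with a fresh $a=t$. This fails when $\exists!t$ is unavailable, since positive free logic does not license it.

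So instead I would simply allow \rf{C} with complex $t$ and check the reductions directly. Both conversions above just return $\Sigma_1$; there is no substitution of $u$ for a parameter, so the complexity problem that motivated the restriction never arises for \rf{C} detours. The degree bookkeeping of Definition 6N is untouched. With this observation the induction of Theorem \ref{normalIPFDD} and Theorem \ref{normalCPFDD} yields normalisation for $\mathbf{IPF}{'}^{\invertediota 1}$ and $\mathbf{CPF}{'}^{\invertediota 1}$.
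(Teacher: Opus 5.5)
Your proposal is essentially the paper's proof. The two conversions that keep only $\Sigma_1$ are exactly the paper's, and everything else is deferred to Theorems \ref{normalIPFDD} and \ref{normalCPFDD} as there. Strictly, contrary to your remark, the conversion can create new segments ending in $u=t$, but these have lower degree, so the rank still drops.

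Your \emph{main obstacle} does not arise, though. The restriction of Lemma \ref{iotarestricted} governs $(\invertediota E_1)$ and $(\invertediota E_2)$, not \rf{C}, so an instance of \rf{C} can be their major premise only when $t$ is atomic; this is how the paper dismisses complex $t$. In the classical case, the paper also simply replaces any $(\bot E_C)$-derived $\invertediota x(x=t)=t$ with \rf{C} instead of running Andou's reductions through it.
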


\begin{proof}
\emph{Mutatis mutandis} the proof proceeds as for Theorems \ref{normalIPFDD} and \ref{normalCPFDD}, by noting the following. (1) Using \rf{C} as the major premise in $(\invertediota E_1)$ and $(\invertediota E_2)$ is redundant: 

\begin{center} 
\AxiomC{$\overline{\ \invertediota x (x=t) = t\ }$}
\AxiomC{$\Sigma$}
\noLine
\UnaryInfC{$u=t$}
\AxiomC{$\Pi$}
\noLine
\UnaryInfC{$\exists !u$}
\LeftLabel{$_{(\invertediota E_1)}$} 
\TrinaryInfC{$u=t$}
\DisplayProof

\bigskip

\AxiomC{$\overline{\ \invertediota x (x=t) = t\ }$}
\AxiomC{$\Sigma$}
\noLine
\UnaryInfC{$u=t$}
\AxiomC{$\Pi$}
\noLine
\UnaryInfC{$\exists ! t$}
\AxiomC{$\Xi$}
\noLine
\UnaryInfC{$\exists !u$}
\LeftLabel{$_{(\invertediota E_2)}$}
\QuaternaryInfC{$u=t$}
\DisplayProof
\end{center} 

\noindent In both cases, keep only $\Sigma$ and continue the deduction with the rule applied to the lower $u=t$. Due to the restriction on $(\invertediota E_1)$ and $(\invertediota E_2)$, $t$ is an atomic term, so this is the only case that needs to be considered. (2) Deriving $\invertediota x (x=t)=t$ by $(\bot E_C)$ is also redundant. Anything above this conclusion can be deleted and $\invertediota x (x=t)=t$ derived by \rf{C} instead. If the conclusion of $(\bot E_C)$ is $t=\invertediota x(x=t)$, derive it from \rf{C} by symmetry, deleting all the rest. The situation considered in Andou's method to prove normalisation for the classical systems then reduces to the two cases displayed above. 
\end{proof}

\noindent \rf{C} affords new grounds for the derivation of formulas with $\invertediota$ terms. It does not, however, afford any new consequences of such formulas that are specifically to do with $\invertediota$. This points to a misbalance and a lack of harmony: $(\invertediota E_1)$ and $(\invertediota E_2)$ are too weak relative to \rf{C}. In Dummett's terminology the rules are \emph{not stable}. We'd expect further elimination rules to balance \rf{C} and permit the derivation of new consequences that do have something to do with the meaning of $\invertediota$ rather than just positive free logic without it. There are, of course, new consequences to do with identity. In particular, using \rf{C} as the major premise of $(=E)$, formulas of the form $A_{\invertediota x(x=t)}^y$ can be simplified to $A_t^y$. A number of philosophical questions arise. Is this misbalance in the rules problematic for an account in which the meaning of $\invertediota$ is supposed to be defined by the rules of inference governing it? If it is, what would a suitable additional elimination rule be? Or rather, as from a proof-theoretic perspective rules are preferable to axioms, what would additional introduction and elimination rules for $\invertediota$ be that permit us to prove \rf{C}? Is there a misbalance between the rules for $\invertediota$ and $=$? If so, how to balance it out? I shall leave these questions as suggestions for further work. 

In the light of the previous restrictions on $\invertediota$ rules, it is worth observing that if we add the rule $(\exists i)$ of section \ref{SecFDA}, \rf{C} can be restricted to atomic $t$. Choosing a fresh parameter $a$, by $(=E)$ $a=\invertediota xA, \invertediota x(x=a)=a\vdash \invertediota x(x=\invertediota xA)=\invertediota xA$, hence by $(\exists i)$ and the fact that \rf{C} follows from no premises, $\vdash \invertediota x(x=\invertediota xA)=\invertediota xA$. It can then also be established that \rf{C} need never be the major premise of $(=E)$. To illustrate with one case, instead of using $\invertediota x(x=t)=t$, take a fresh parameter $a$ to derive $a=t, \invertediota x(x=t)=t\vdash \invertediota x(x=t)=a$, then by two applications of $(=E)$ from $A_{\invertediota x(x=t)}^y$ derive $A_t^y$, discharge $a=t$ by $(\exists i)$. But \rf{C} has been used as a minor premise directly above a major premise of $(=E)$, so this looks to me like cheating and I am not sure about the significance of the result.

\subsection{FD2}
Recall \rf{FD2}: $\invertediota xA=t\leftrightarrow \forall x (x=t\leftrightarrow \forall y(A_y^x\leftrightarrow y=x))$. One half of \rf{FD2} follows from \rf{LA}: 

\begin{lemma}\label{halfofLA2}
$\rf{LA} \vdash \invertediota xA=t\rightarrow \forall x (x=t\leftrightarrow \forall y(A_y^x\leftrightarrow y=x))$
\end{lemma}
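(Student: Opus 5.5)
The plan is to work in $\mathbf{IPF}$ with \rf{LA} as an additional axiom and to use the rules for the biconditional from the proof of Lemma \ref{LAandrules}. I assume $\invertediota xA=t$ and aim for $\forall x (x=t\leftrightarrow \forall y(A_y^x\leftrightarrow y=x))$; a final $(\rightarrow I)$ then discharges the assumption. The universal quantifier is obtained by $(\forall I)$. So I take a fresh parameter $a$, assume $\exists !a$ (to be discharged by that $(\forall I)$), and derive $a=t\leftrightarrow \forall y(A_y^x\leftrightarrow y=a)$ by $(\leftrightarrow I)$. The freshness condition holds because $a$ occurs neither in $\invertediota xA=t$ nor in \rf{LA}.

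The crucial choice is where to instantiate \rf{LA}. We are in positive free logic and have no premise $\exists !t$, so we cannot instantiate the outer quantifier of \rf{LA} at $t$. Instead I instantiate it at the parameter $a$, using the assumption $\exists !a$ and $(\forall E)$. This gives $\invertediota xA=a\leftrightarrow \forall y(A_y^x\leftrightarrow y=a)$, and both directions of the required biconditional go through this instance. I expect this to be the only real obstacle; the remaining steps are only identity manipulations.

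For the direction from left to right, I assume $a=t$ (discharged by $(\leftrightarrow I)$). Symmetry of identity, which is derivable from $(=I)$ and $(=E)$, gives $t=a$. Then $(=E)$ applied to $t=a$ and $\invertediota xA=t$ yields $\invertediota xA=a$. An application of $(\leftrightarrow E_2)$ to the instance of \rf{LA} then gives $\forall y(A_y^x\leftrightarrow y=a)$.

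For the direction from right to left, I assume $\forall y(A_y^x\leftrightarrow y=a)$. By $(\leftrightarrow E_1)$ on the same instance I obtain $\invertediota xA=a$. By symmetry this gives $a=\invertediota xA$. Then $(=E)$ applied to $a=\invertediota xA$ and $\invertediota xA=t$ (replacing $\invertediota xA$ by $a$) gives $a=t$.

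Combining the two directions by $(\leftrightarrow I)$, then applying $(\forall I)$ to discharge $\exists !a$, and finally $(\rightarrow I)$ to discharge $\invertediota xA=t$, yields the stated conditional from \rf{LA}. The whole argument avoids assuming $\exists !t$.
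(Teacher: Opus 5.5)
Your proof is correct and is essentially the paper's own argument. You take a fresh $a$ with $\exists !a$ and instantiate \rf{LA} at $a$ rather than at $t$. You get $\forall y(A_y^x\leftrightarrow y=a)$ from $\invertediota xA=t$ and $a=t$ via $\invertediota xA=a$, and conversely $a=t$ from $\invertediota xA=a$ and $\invertediota xA=t$, closing with $(\leftrightarrow I)$, $(\forall I)$ and $(\rightarrow I)$.

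One cosmetic point: in the right-to-left direction the symmetry step is superfluous, since $(=E)$ with major premise $\invertediota xA=a$ turns $\invertediota xA=t$ directly into $a=t$. If you keep $a=\invertediota xA$, the step is transitivity with $\invertediota xA=t$ as the major premise, not a replacement of $\invertediota xA$ by $a$.
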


\begin{proof}
Where $a$ is a fresh parameter, positive free logic gives \rf{LA}, $\invertediota xA=a, \exists !a\vdash \forall y(A_y^x\leftrightarrow y=a))$ and $\invertediota xA=t, a=t\vdash \invertediota xA=a$, so \rf{LA}, $\invertediota xA=t, a=t, \exists !a\vdash \forall y(A_y^x\leftrightarrow y=a))$. Conversely, \rf{LA}, $\exists !a, \forall y(A_y^x\leftrightarrow y=a))\vdash \invertediota xA=a$, and also $\invertediota xA=a, \invertediota xA=t\vdash a=t$, hence \rf{LA}, $\invertediota xA=t, \exists !a, \forall y(A_y^x\leftrightarrow y=a))\vdash a=t$. The result follows by $(\leftrightarrow I)$ and $(\forall I)$.
\end{proof}

\noindent This means that formalising $\mathbf{FD2}$ in natural deduction requires no further elimination rules besides $(\invertediota E_1)$ and $(\invertediota E_2)$. It only requires either further introduction rules or a strengthening of $(\invertediota I)$ that permits the deduction of its other half: 

\lbp{FD2rl}{$FD2^{rl}$}{$\forall x (x=t\leftrightarrow \forall y(A_y^x\leftrightarrow y=x))\rightarrow \invertediota xA=t$}

\noindent Transposing this half of \rf{FD2} into a rule of natural deduction produces the following monstrosity:\footnote{A referee observes that the axiom, too, is a monstrosity.} 

{\small
\begin{prooftree}
\def\defaultHypSeparation{\hskip .1in}
\AxiomC{$[a=t]^i \ [b=a]^j \ [\exists !a]^k \ [\exists !b]^l$}
\noLine
\UnaryInfC{$\Sigma$}
\noLine
\UnaryInfC{$A_b^x$}
\AxiomC{$[a=t]^i \ [A_b^x]^m \ [\exists !a]^k \ [\exists !b]^l$}
\noLine
\UnaryInfC{$\Pi$}
\noLine
\UnaryInfC{$b=a$}
\AxiomC{$[\forall y(A_y^x\leftrightarrow y=x))]^n\ [\exists !a]^k$}
\noLine
\UnaryInfC{$\Xi$}
\noLine
\UnaryInfC{$a=t$}
\RightLabel{$_{i, j, k, l, m, n}$}
\TrinaryInfC{$\invertediota xA=t$}
\end{prooftree}
}

\noindent Although it is difficult to spell out the notion of harmony precisely, it is an educated guess that most philosophers working in the field will agree that this rule is not in harmony with $(\invertediota E_1)$ and $(\invertediota E_2)$. The problem is the nested biconditional of \rf{FD2rl}, which is responsible for the discharge of $\forall y(A_y^x\leftrightarrow y=a)$ above the deduction of $a=t$. I see no way of avoiding this. This is unfortunate for those who were hoping to develop a proof-theoretic semantics for $\mathbf{FD2}$, fortunate for those wish to exclude it. And indeed: the motivation for the chosen object theory is decidedly model-theoretic. That it may have to be set aside from the proof-theoretic perspective may thus not come as a surprise.

\section{Alternative Theories of Definite Descriptions}\label{alttheoryDD}
\subsection{FDA}\label{SecFDA}
With $(\invertediota I')$ instead of $(\invertediota I)$, the restriction to a single occurrence of an $\invertediota$ term can no longer be established as in Lemma \ref{iotarestricted}, as $(=I^{nG})$ requires the existence premise that is lacking. This can, however, be achieved by an alternative rule equivalent to one given by Garson \citeyearpar[259f]{garsonmodallogic}: 

\begin{prooftree}
\AxiomC{$[a=t]^i$}
\noLine
\UnaryInfC{$\Pi$}
\noLine
\UnaryInfC{$C$}
\RightLabel{$_i$}
\LeftLabel{$(\exists i) \ $}
\UnaryInfC{$C$}
\end{prooftree}

\noindent where $a$ is neither free in $t$, nor in $C$, nor in any undischarged assumptions it depends on except those of the assumption class of $a=t$.\bigskip 
 
\noindent Garson requires this rule to prove the completeness of some systems of quantified modal logic. It is not required for the completeness of $\mathbf{CPF}$ relative to its standard semantics. The following is the case: 

\begin{theorem}\label{existsi}
$(\exists i)$ is (a) admissible but (b) not derivable in $\mathbf{CPF}^{\invertediota}$. 
\end{theorem}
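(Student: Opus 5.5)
For (a) I will argue semantically, relying on the soundness and completeness of $\mathbf{CPF}$ (and so of $\mathbf{CPF}^{\invertediota}$, via its equivalence with Lambert's $\mathbf{MFD}$) with respect to the standard dual domain semantics. That semantics has an outer domain $D_o$ over which all terms range and an inner domain $D_i\subseteq D_o$ that interprets $\exists !$ and the quantifiers. I will treat $\invertediota$ terms in the usual way: $\invertediota xF$ denotes the unique inner $F$ if there is one, and otherwise an arbitrary element of $D_o$.

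Suppose $\Pi$ derives $C$ from $a=t$ together with $\Gamma$, where $a$ is fresh for $t$, $C$ and $\Gamma$. By soundness, $\Gamma, a=t\models C$. Now take any model $M$ of $\Gamma$. Since $a$ occurs neither in $\Gamma$ nor in $C$, I can modify $M$ to $M'$ by setting $a^{M'}=t^{M}$. Because $a$ is not free in $t$, the value of $t$ is unchanged in $M'$. So $M'$ satisfies $\Gamma$ and $a=t$, hence $M'\models C$, and therefore $M\models C$. Thus $\Gamma\models C$, and completeness yields $\Gamma\vdash C$.

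The crucial point is that $t$ always denotes \emph{some} element of the outer domain, even when it is not in $D_i$. This is exactly where positive free logic differs from negative free logic. A purely syntactic alternative would substitute $t$ for $a$ throughout $\Pi$, turning $a=t$ into $t=t$, which is then obtained by $(=I)$. I expect this route to be the cleaner one and may present it instead. To carry it out I must check that substituting $t$ for $a$ preserves all rule applications; the eigenvariable conditions are fine once fresh parameters are renamed apart. The restricted $\invertediota$ rules of Lemma \ref{iotarestricted} would break under this substitution, but $\mathbf{CPF}^{\invertediota}$ as defined has the unrestricted rules, so the substitution is harmless. This substitution argument is the one I will commit to for (a), and it avoids appealing to completeness for the $\invertediota$ language.

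For (b) I need a sense of "derivable" under which admissibility fails to give derivability. I take derivability to mean that the rule is derivable schematically, with $\Pi$ treated as an open hypothetical derivation. In that case the conclusion $C$ would have to be obtained from the premise "$C$ under hypothesis $a=t$" by a fixed derivation scheme, for arbitrary $C$. The natural route to such a scheme would use $(=I^{nG})$, which needs $\exists !t$, or would discharge $a=t$ by $(\rightarrow I)$ and then instantiate by $(\forall E)$, which also needs $\exists !t$. To make this precise, I will show that such a scheme would make $\forall y(y=t\rightarrow C)\rightarrow C$ valid for atomic $C$ with $t$ not existing. I will then give a dual domain countermodel: take $t$ denoting an outer object $o\notin D_i$ and $C$ a prime formula $Pt$ that is false. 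I also need to handle the generalisation over $\Pi$: a derivation of the rule must work uniformly, so it must yield $\forall y(y=t\rightarrow C)\vdash C$ or something similar, using the constant-free closure. This step, making "not derivable" precise via a uniform-scheme argument, is where I expect the main difficulty to lie.
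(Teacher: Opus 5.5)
Your part (a) is the paper's own argument: replace $a$ by $t$ throughout $\Pi$, so that the assumptions $a=t$ become instances of $(=I)$. Your remark that this relies on the unrestricted $\invertediota$ rules is apt.

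Part (b), however, has a genuine gap, and the route you sketch cannot be repaired. First, derivability of $(\exists i)$ does not yield $\forall y(y=t\rightarrow C)\vdash C$. To use the rule you must supply a derivation of $C$ from $a=t$ whose other open assumptions do not contain $a$. From $\forall y(y=t\rightarrow C)$ you reach $C$ only via $(\forall E)$ at $a$, which leaves $\exists !a$ open and violates the side condition.

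Second, and more fundamentally, by (a) adding $(\exists i)$ to $\mathbf{CPF}^{\invertediota}$ produces no new derivable sequents. So every sequent you could extract from a putative derivation of the rule is already derivable, hence valid in the standard dual domain semantics. Indeed, your own semantic argument for (a) shows that $(\exists i)$ preserves validity in every standard model, since $a$ can always be assigned the value of $t$. No standard countermodel can therefore separate derivability from admissibility. The `uniform scheme' step you flag is not merely hard along these lines but impossible.

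What is needed, and what the paper attempts, is a nonstandard notion of model. In it each primitive rule of $\mathbf{CPF}^{\invertediota}$ still preserves validity, so every derivable rule does too. But closure under substituting closed terms for parameters fails, so $(\exists i)$ does not preserve validity. The paper decouples the range of the parameters from the interpretation of closed terms, using an extra element $\ast$ outside the outer domain.

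The feature that must be secured is that some closed term can take a value no parameter can take. For example, let parameters range over $\mathfrak{E}\cup\{\ast\}$ while a constant $c$ may denote an object of $\mathfrak{D}$ outside $\mathfrak{E}$. Then $\neg\exists !c, a=c\models\bot$, but $\neg\exists !c\not\models\bot$. Meanwhile $(\forall I)$, $(\exists E)$ and $(\invertediota I)$ remain sound, because each discharges $\exists !a$ and $\mathfrak{E}$ lies within the range of the parameters.

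Note that if, as the paper's sketch literally says, the parameters range over all of $\mathfrak{D}\cup\{\ast\}$, the choice $v(a)=v(t)$ is still available and $(\exists i)$ stays sound. So what does the work is leaving the non-existent objects of $\mathfrak{D}$ out of the parameters' range, not adding $\ast$.
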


\begin{proof}
(a) That $(\exists i)$ is admissible is easily seen: deleting any application of $(\exists i)$ and replacing $a$ by $t$ results in a correct deduction. (b) Proof sketch for those familiar with the dual domain semantics for $\mathbf{CPF}^{\invertediota}$. Consider a semantics that is like the standard one (outer domain $\mathfrak{D}$ and inner domain $\mathfrak{E}\subseteq\mathfrak{D}$ over which $\exists !$ is interpreted), but with an additional element $\ast$ not in the outer domain. The interpretation of constants and $\invertediota$ terms stays untouched, i.e. they are interpreted in the outer domain, but variables ranger over $\mathfrak{D}\cup\{\ast\}$. Then all rules of $\mathbf{CPF}^{\invertediota}$ preserve validity. But $a=t$ is false, if $v(a)=\ast$, for any closed term $t$, and so $(\exists i)$ does not preserve validity.\footnote{This proof resulted from a discussion with Daniel Skurt.} 
\end{proof}

\noindent \emph{A fortiori} $(\exists i)$ is admissible but not derivable in  $\mathbf{IPF}^{\invertediota}$.

$(\exists i)$ permits the restriction of $(\exists I)$ and $(\forall E)$ to atomic instantiating terms: 

\begin{lemma}\label{altrestrictionquantifiers}
Given $(\exists i)$, $(\exists I)$ and $(\forall E)$ may be restricted to atomic terms. 
\end{lemma}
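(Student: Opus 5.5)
The plan is to mimic the proof of Lemma \ref{quantifiersrestricted}, replacing $(=I^{nG})$ by $(\exists i)$. The difference that matters is that $(\exists i)$ needs no existence premise. That is exactly why it suits the setting with $(\invertediota I')$, where $\exists !t$ may not be available as a minor premise for $(=I^{nG})$. In both $(\forall E)$ and $(\exists I)$, however, $\exists !t$ is already a premise, so either rule would in fact serve here. The point of the lemma is only that $(\exists i)$ alone suffices.

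For $(\forall E)$ with complex $t$, I would proceed as follows.
\begin{enumerate}
\item Choose a fresh parameter $a$ and assume $a=t$.
\item From $a=t$ and the given $\exists !t$, derive $\exists !a$ by $(=E)$.
\item From $\forall xA$ and $\exists !a$, derive $A_a^x$ by $(\forall E)$, now with an atomic instantiating term.
\item Obtain $A_t^x$ from $a=t$ and $A_a^x$ by $(=E)$. Strictly this takes $t=a$ as major premise, so first pass from $a=t$ to $t=a$ by symmetry, which is derivable from $(=I)$ and $(=E)$.
\item Discharge $a=t$ by $(\exists i)$.
\end{enumerate}
The side condition holds: $a$ is fresh, so it occurs neither in $t$ nor in $A_t^x$, and no other open assumption contains it.

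For $(\exists I)$ with premises $A_t^x$ and $\exists !t$ and complex $t$, the construction is analogous.
\begin{enumerate}
\item Again take a fresh $a$ and assume $a=t$.
\item Derive $\exists !a$ from $a=t$ and $\exists !t$ by $(=E)$.
\item Derive $A_a^x$ from $a=t$ and $A_t^x$ by $(=E)$. Here $(=E)$ is applied with $a=t$ as major premise, replacing $t$ by $a$ at the positions where $x$ occurs. This is the right direction as stated, but if the paper's convention demands it, use symmetry first.
\item Apply $(\exists I)$ with atomic $a$ to obtain $\exists xA$.
\item Discharge $a=t$ by $(\exists i)$; $a$ does not occur in $\exists xA$.
\end{enumerate}

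The only step needing care is the $(=E)$ application that forms $A_a^x$ or $A_t^x$. By Lemma 1N, $(=E)$ may be restricted to atomic conclusions, and $A$ need not be atomic. I would simply observe that Lemma 1N shows unrestricted $(=E)$ is derivable from its restricted form. So the construction is performed with unrestricted $(=E)$, and these applications are then decomposed as in Lemma 1N. That decomposition introduces no new complex instantiating terms in quantifier rules, since it proceeds by induction on $A$ and only uses quantifier rules with parameters.
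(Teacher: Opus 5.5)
Your proposal is correct and uses the same construction as the paper. The paper writes out the $(\exists I)$ case: assume $a=t$, obtain $A_a^x$ and $\exists !a$ by $(=E)$, apply $(\exists I)$ with $a$, and discharge by $(\exists i)$; it then says `similarly' for $(\forall E)$. Your remark about re-decomposing the non-atomic $(=E)$ steps via Lemma 1N is a sound detail the paper leaves implicit.

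One harmless slip: $(=E)$ as stated takes $t_1=t_2$ and $A_{t_1}^x$ to $A_{t_2}^x$, so $a=t$ takes $A_a^x$ to $A_t^x$ directly. It is therefore the steps producing $\exists !a$ and $A_a^x$ that strictly need symmetry, not the last step of your $(\forall E)$ case.
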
 

\begin{proof}
Replace an application of $(\exists I)$ in which $t$ is complex by the following: 

\begin{prooftree}
\AxiomC{$[a=t]^i$}
\AxiomC{$A_t^x$}
\LeftLabel{$_{(=E)}$}
\BinaryInfC{$A_a^x$}
\AxiomC{$[a=t]^i$}
\AxiomC{$\exists ! t$}
\LeftLabel{$_{(=E)}$}
\BinaryInfC{$\exists ! a$}
\LeftLabel{$_{(\exists I)}$}
\BinaryInfC{$\exists xA$}
\LeftLabel{$_{(\exists i)}$}
\RightLabel{$_i$}
\UnaryInfC{$\exists xA$}
\end{prooftree}

\noindent Similarly for $(\forall E)$. 
\end{proof}

\noindent The restrictions of the $\invertediota$ rules can also be established: 

\begin{lemma}\label{altiotarestricted}
Given $(=I)$, $(=E)$ and $(\exists i)$, $(\invertediota I')$, $(\invertediota E_1)$ and $(\invertediota E_2)$ can be restricted to the single occurrence of an $\invertediota$ term where it is displayed in the rules. 
\end{lemma}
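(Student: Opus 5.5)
The plan is to mimic the proof of Lemma \ref{iotarestricted} step by step. Each use of $(=I^{nG})$, which discharges $a=t$ given a premise $\exists !t$, is replaced by $(\exists i)$, which discharges $a=t$ with no existence premise. Since $(\exists i)$ needs no premise, the transformations are in fact simpler. The only places where existence formulas are needed are the premises $\exists !a$ of the elimination rules, and these must still be obtained from the available existence premises by $(=E)$. As before, symmetry of identity is derived from $(=I)$ and $(=E)$, and $a,b$ are fresh parameters.

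\textbf{Case (1): $(\invertediota I')$ with complex $t=\invertediota yG$.} Take a fresh $b$ and assume $b=\invertediota yG$, to be discharged at the end by $(\exists i)$. In $\Xi$, the assumption $a=\invertediota yG$ is replaced by a derivation of it by $(=E)$ from $a=b$ and $b=\invertediota yG$. From the conclusion $a=\invertediota yG$ of $\Pi$ we get $a=b$ using symmetry and $(=E)$ with $b=\invertediota yG$. The assumptions $\exists !a$ stay untouched and are discharged as before. An application of $(\invertediota I')$ then gives $\invertediota xF=b$, which is legitimate since $b$ is atomic and no premise $\exists !b$ is required. From this and $b=\invertediota yG$, $(=E)$ yields $\invertediota xF=\invertediota yG$, and $(\exists i)$ discharges $b=\invertediota yG$. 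The side conditions hold because $b$ is fresh and does not occur in the conclusion.

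\textbf{Cases (2) and (3): $(\invertediota E_1)$ and $(\invertediota E_2)$ with complex $t=\invertediota yG$.} Copy the derivations from Lemma \ref{iotarestricted}. Assume $a=\invertediota yG$ and derive $\invertediota xF=a$ from it and the major premise by symmetry and $(=E)$. For $E_1$, derive $u=a$ from $u=\invertediota yG$. For $E_2$, derive $\exists !a$ from $a=\invertediota yG$ and the minor premise $\exists !\invertediota yG$ by $(=E)$, apply the rule to get $u=a$, and then transform this to $u=\invertediota yG$. Finally, close with $(\exists i)$ instead of $(=I^{nG})$. In $E_1$ the derived formula $\exists !\invertediota yG$ was only used as the premise of $(=I^{nG})$, so it can now simply be dropped.

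\textbf{Complex $u=\invertediota zH$.} Proceed as in the lemma: replace $u$ by a fresh $b$ under the assumption $b=\invertediota zH$. Obtain $\exists !b$ from $\exists !\invertediota zH$, which is always available as a minor premise of both elimination rules. Move $F$ and the identities back and forth by $(=E)$, and discharge $b=\invertediota zH$ by $(\exists i)$.

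The main thing to check is the variable conditions of $(\exists i)$. The discharged parameter must not occur in the final conclusion or in other open assumptions, and nesting the discharges for $t$ and $u$ in the combined case must respect this; this is where I would be most careful. Applications of $(=E)$ need atomic conclusions by Lemma 1N. If that restriction matters here, the identities produced are atomic anyway, and the $F$-transfers can be handled by that lemma.
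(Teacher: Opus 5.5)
Your proposal is correct and follows the paper's route. The paper's proof simply modifies the deductions of Lemma \ref{iotarestricted}, turning each application of $(=I^{nG})$ into $(\exists i)$ and deleting its existence premise and everything above it. Yours is more explicit: in case (1) you also drop the premise $\exists !b$ of the old $(\invertediota I)$, since $(\invertediota I')$ has none, whereas in the $(\invertediota E_2)$ case and the complex-$u$ cases you keep the existence premises to derive $\exists !a$ and $\exists !b$.
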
 

\begin{proof}
Modify the deductions that prove Lemma \ref{iotarestricted}: change applications of $(=I^{nG})$ to $(\exists i)$ by deleting its left premise and anything above it. 
\end{proof}

\noindent Let $\mathbf{IPF}^{\invertediota A}$ be the system that results from $\mathbf{IPF}{'}^{\invertediota}$ by replacing $(\invertediota I)$ by $(\invertediota I')$ and $(=I^{nG})$ by $(\exists i)$. Similarly for $\mathbf{CPF}^{\invertediota A}$. Permutative reduction procedures work for $(\exists i)$ as for $(=I^{nG})$, and the reduction procedures for the rules for $\invertediota$ are almost the same: observe that in the latter the leftmost premise of $(\invertediota I)$ is idle, and so that it is missing altogether in $(\invertediota I')$ makes no essential difference.  

\begin{theorem}
Deductions in $\mathbf{IPF}^{\invertediota A}$ and $\mathbf{CPF}^{\invertediota A}$ can be brought into normal form. 
\end{theorem}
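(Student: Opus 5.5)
The plan is to run the same argument as for Theorems \ref{normalIPFDD} and \ref{normalCPFDD}: an induction on the rank $\langle d, l\rangle$ of deductions (Definition 13N). First I would invoke Lemmas \ref{altrestrictionquantifiers} and \ref{altiotarestricted}. They let me assume that $(\forall E)$ and $(\exists I)$ have atomic instantiating terms, and that the $\invertediota$ rules contain only the displayed occurrence of an $\invertediota$ term. Hence $t$ and $u$ in $(\invertediota I')$, $(\invertediota E_1)$ and $(\invertediota E_2)$ are atomic, and substituting them for parameters in a reduction cannot raise the degree of any formula. Definition 4N is modified by inserting `or $(\exists i)$' after $(\exists E)$, exactly as was done for $(=I^{nG})$. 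The maximal segments are then those of Definition 5N, and for the classical system also those of Definition 21N with the redundant clause about $(AD)$ dropped.

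Next I would list the reductions.
\begin{enumerate}
\item The propositional and quantifier reductions are unchanged.
\item $(\invertediota I')$ followed by $(\invertediota E_1)$: substitute $u$ for $a$ in $\Xi$ and plug $\Sigma_2$, $\Sigma_3$ into the discharged assumptions $u=t$, $\exists !u$. This is as in reduction (1) of the previous section, except that there is no $\Sigma_1$ to discard.
\item $(\invertediota I')$ followed by $(\invertediota E_2)$: use $\Pi_u^a$ with $\Sigma_2$, $\Sigma_4$. The minor premise $\exists !t$ of $(\invertediota E_2)$ is simply dropped.
\item Permutative reductions for $(\exists i)$: when the conclusion of $(\exists i)$ is the major premise of an elimination, move the elimination above $(\exists i)$. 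This is the same as for $(=I^{nG})$ with the left premise deleted. The eigenparameter condition is kept by renaming $a$ to a fresh parameter, since $a$ must not occur in the new conclusion or in the minor premises' assumptions.
\item For $\mathbf{CPF}^{\invertediota A}$: Andou-style reductions (1) and (2) for $(\bot E_C)$ followed by $(\invertediota E_1)$ or $(\invertediota E_2)$, used together with Lemma 10N on regular proofs.
\end{enumerate}

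The induction step chooses a maximal segment of highest degree such that no maximal segment of highest degree stands above it or above a minor premise of its final elimination. This is the usual rightmost–topmost choice. I then check that applying its reduction lowers the rank. In the detour reductions the new maximal formulas are subformulas of the eliminated major premise, or are $F_u^x$ and $u=t$. These have lower degree than $\invertediota xF=t$, because $d(\invertediota xF)=d(F)+1$ and the identity adds a further $1$. Substitutions of atomic terms do not alter degrees. The permutative cases shorten the segment, reducing $l$. Duplicated subdeductions such as $\Sigma_3$ contain no maximal segments of highest degree, by the choice of the segment.

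I expect the main obstacle to be checking that $(\exists i)$, which has no existence premise, really behaves like $(=I^{nG})$ with respect to segments and permutations. In particular I must confirm that no new kind of maximal segment arises where the assumption $a=t$ discharged by $(\exists i)$ feeds directly into $(=E)$. I would argue that such configurations are not maximal in the sense of Definition 5N, since $(=E)$ with an assumption as major premise is not preceded by an introduction. In the classical case, Lemma 10N(b) must still hold when $(\bot E_C)$ conclusions lie inside $(\exists i)$-segments. Here I would follow the proof of Theorem 7 of \citep{kurbisiotanegfreelogic} and treat $(\exists i)$ as a one-premise instance of $(=I^{nG})$ throughout.
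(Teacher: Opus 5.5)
You follow the paper's own route. The paper's proof is only ``\emph{mutatis mutandis} as for $\mathbf{IPF}{'}^{\invertediota}$ and $\mathbf{CPF}{'}^{\invertediota}$''. It adds two remarks: the left premise of $(\invertediota I)$ was idle, and $(\exists i)$ permutes like $(=I^{nG})$. You spell out exactly these points, correctly.

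However, one step of your induction fails as stated: ``the permutative cases shorten the segment, reducing $l$''. The standard argument needs the conclusion of the permuted elimination to have lower degree than its major premise, or, for $(\lor E)$ and $(\exists E)$, that the segments through its minor premises already exist. $(=E)$ satisfies neither condition: its atomic conclusion can have \emph{higher} degree than the identity that is its major premise.

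Here is an example. Let $a=b$ be concluded by $(\lor E)$, each branch obtaining $a=b$ by $(\rightarrow E)$. Let $a=b$ be the major premise of $(=E)$ with minor premise $\invertediota xGx=a$. Let the conclusion $\invertediota xGx=b$ be the major premise of $(\invertediota E_1)$ with minor premises $u=b$ and $\exists !u$.
\begin{itemize}
\item The only maximal segments are the two ending in $a=b$, each of degree $1$, so the rank is $\langle 1,4\rangle$.
\item The only available reduction pushes $(=E)$ into the branches.
\item Afterwards $\invertediota xGx=b$ is the conclusion of $(\lor E)$ and the major premise of $(\invertediota E_1)$. This gives two new maximal segments of degree $d(\invertediota xGx=b)=2$, so the rank has risen to $\langle 2,4\rangle$.
\end{itemize}
The same happens with $(\exists i)$ or $(\exists E)$ in place of $(\lor E)$. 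It also happens with Andou's reduction for $(\bot E_C)$ followed by $(=E)$ in $\mathbf{CPF}^{\invertediota A}$. Even without $\invertediota$, permuting an $(=E)$ whose conclusion is the major premise of a further $(=E)$ only trades degree-$1$ segments for degree-$1$ segments, so $l$ need not drop.

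The theorem itself is not threatened: in the example, permuting $(\invertediota E_1)$ next yields a normal deduction. But the induction needs an ingredient that neither your sketch nor the paper's ``\emph{mutatis mutandis}'' supplies. Three ways to supply it:
\begin{itemize}
\item Permute $(=E)$ together with the eliminations that successively consume its conclusion, until the conclusion reached has degree below $d$ or is no longer a major premise. The choice of segment must then be adapted so that the minor-premise derivations duplicated in this step contain no maximal segment of degree $d$.
\item Use a finer measure that charges a segment ending in the major premise of $(=E)$ for the formulas its permutation will create.
\item Stop counting such segments as maximal, treating identity as atomic in Prawitz's manner. This, however, weakens the notion of normal form of Definition 5N.
\end{itemize}
What is specific to $\mathbf{IPF}^{\invertediota A}$ and $\mathbf{CPF}^{\invertediota A}$ you handle correctly: the missing premise of $(\invertediota I')$, the one-premise rule $(\exists i)$, and the appeal to the restriction lemmas. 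The gap lies in the inherited permutative step.
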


\begin{proof}
\emph{Mutatis mutandis} the proofs proceed as for $\mathbf{IPF}{'}^{\invertediota}$ and $\mathbf{CPF}{'}^{\invertediota}$.
\end{proof}

\noindent $(\exists i)$ could have been used for the systems of the previous sections, too. It has the advantage over $(=I^{nG})$ that it cannot so easily be objected that it hides a maximal formula. However, as it is admissible but not derivable, while $(=I^{nG})$ is derivable, it seemed preferable to use the latter in the systems with $(\invertediota I)$, so that the formalised theory of definite descriptions is Lambert's, not a variation thereof. It is also not clear whether $(\exists i)$ really is preferable over $(=I^{nG})$ from the philosophical perspective. Its intuitive content is that for any object picked out by a definite description, I can introduce an \emph{ad hoc} proper name and reason with that instead of the description. In Kripke's words, I can baptise the object picked out by the description. Arguably I can only do that with things that exist: what doesn't exist cannot be baptised. So for philosophical reasons there remains a hidden existence assumption in $(\exists i)$ that is made explicit in $(=I^{nG})$. An examination of these matters must await another occasion.

\subsection{Lambert's Serendipitous Theory}
At one point Lambert writes that the following axiomatises $\mathbf{FD2}$ \citep[32]{lambertEThe}:

\lbp{FDS}{$FDS$}{$\invertediota xA=t\leftrightarrow \forall x (x=t\leftrightarrow (A\land \forall y(A_y^x\rightarrow y=t)))$}

\noindent Note the subtle difference from $\invertediota xA=t\leftrightarrow \forall x (x=t\leftrightarrow (A\land \forall y(A_y^x\rightarrow y=x)))$,\footnote{This is a slight reformulation of the axiom used by Lambert and van Fraassen \citeyearpar[237]{fraassenlambert} to show the equivalence between what is here called $\mathbf{FD2}$.} the latter being equivalent to \rf{FD2}. This is a typo. \rf{FD2} and \rf{FDS} are not equivalent. \rf{FDS}, however, does not face the difficulties of casting it into rules of natural deduction that \rf{FD2} faced. For the purposes of proof-theoretic semantics, Lambert's typo proves to be serendipitous. 

Instead of adding \rf{FDS} to $\mathbf{CPF}$, I'll consider a slightly different axiomatisation of the resulting theory. 

One half of \rf{FDS} follows from the left to right half of \rf{LA}: 

\begin{lemma}\label{halfofFDS}
$\rf{LA} \vdash \invertediota xA=t\rightarrow \forall x (x=t\leftrightarrow (A\land \forall y(A_y^x\rightarrow y=t)))$
\end{lemma}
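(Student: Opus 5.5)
Assume \rf{LA} and $\invertediota xA=t$, take a fresh parameter $a$ with $\exists !a$, and establish $a=t\leftrightarrow (A_a^x\land \forall y(A_y^x\rightarrow y=t))$ by $(\leftrightarrow I)$. Then $(\forall I)$, discharging $\exists !a$, gives $\forall x(\ldots)$, and $(\rightarrow I)$ discharges $\invertediota xA=t$. The biconditional rules from the proof of Lemma \ref{LAandrules} are used throughout. Everything happens in positive free logic, with \rf{LA} used only through its left-to-right half, as the paper's remark anticipates.

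For the left-to-right direction, assume $a=t$ and $\exists !a$. By symmetry and $(=E)$ on $\invertediota xA=t$ we get $\invertediota xA=a$. Instantiating \rf{LA} at $a$ by $(\forall E)$ with $\exists !a$ gives $\invertediota xA=a\leftrightarrow\forall y(A_y^x\leftrightarrow y=a)$, and $(\leftrightarrow E_2)$ yields $\forall y(A_y^x\leftrightarrow y=a)$.
\begin{itemize}
\item Instantiating this at $a$ with $\exists !a$ and applying $(\leftrightarrow E_1)$ to $a=a$ from $(=I)$ gives $A_a^x$.
\item For a fresh $b$ with $\exists !b$ and $A_b^x$, instantiating at $b$ gives $b=a$. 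Then $(=E)$ with $a=t$ gives $b=t$, and $(\rightarrow I)$ and $(\forall I)$ give $\forall y(A_y^x\rightarrow y=t)$.
\end{itemize}
$(\land I)$ finishes this direction.

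For the right-to-left direction, assume $A_a^x\land\forall y(A_y^x\rightarrow y=t)$ and $\exists !a$. By $(\land E)$ we get $A_a^x$ and $\forall y(A_y^x\rightarrow y=t)$. Instantiating the latter at $a$ with $\exists !a$ and applying $(\rightarrow E)$ to $A_a^x$ gives $a=t$ directly. This direction does not even need \rf{LA} or $\invertediota xA=t$.

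There is no real obstacle here. The one point needing care is the left-to-right direction, where we must pass from $\invertediota xA=t$ to $\invertediota xA=a$. Only then can \rf{LA} be instantiated at the existing parameter $a$, since $t$ itself need not exist and $(\forall E)$ cannot be applied with $t$. We also check the eigenvariable conditions: $a$ and $b$ must not occur in $A$, in $t$, or in open assumptions other than the discharged ones.
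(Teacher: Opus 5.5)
Your proof is correct and follows essentially the same route as the paper. You fix an existing parameter $a$, use the instance of \rf{LA} at $a$ together with $(=E)$ for the left-to-right direction, observe that the right-to-left direction is pure positive free logic, and close with $(\leftrightarrow I)$, $(\forall I)$ and $(\rightarrow I)$. You are in fact slightly more careful than the paper: its first sequent leaves out the assumption $a=t$ needed to pass from $\invertediota xA=t$ to $\invertediota xA=a$, and you apply $(=E)$ only to the atomic $b=a$ rather than to the whole conjunction $A_a^x\land\forall y(A_y^x\rightarrow y=a)$.
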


\begin{proof}
By positive free logic \rf{LA}, $\invertediota xA=t, \exists !a, \vdash \forall y(A_y^x\leftrightarrow y=a)$, and so \rf{LA}, $\invertediota xA=t, \exists !a\vdash A_a^x\land \forall y(A_y^x\rightarrow y=a)$. By $(=E)$, \rf{LA}, $\invertediota xA=t, \exists !a, a=t\vdash A_a^x\land \forall y(A_y^x\rightarrow y=t)$. Conversely, $\exists !a, A_a^x\land \forall y(A_y^x\rightarrow y=t)\vdash a=t$, so by $(\leftrightarrow I)$, \rf{LA}, $\invertediota xA=t, \exists !a\vdash a=t\leftrightarrow (A_a^x\land \forall y(A_y^x\rightarrow y=t))$ and the result follows by $(\forall I)$ and $(\rightarrow I)$. 
\end{proof}

\noindent As $\exists !a, A_a^x\land \forall y(A_y^x\rightarrow y=t)\vdash a=t$ just by positive free logic, it is possible to weaken the antecedent of the right to left half of \rf{FDS}. The following suffices: 

\lbp{FDS'}{$FDS'$}{$\forall x (x=t\rightarrow (A\land \forall y(A_y^x\rightarrow y=t)))\rightarrow\invertediota xA=t$}

\begin{lemma}
\rf{FDS'} implies the right to left half of \rf{LA}.
\end{lemma}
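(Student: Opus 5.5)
We have to show, in positive free logic plus FDS$'$, that $\forall z(\forall y(A_y^x\leftrightarrow y=z)\rightarrow \invertediota xA=z)$ is derivable. Equivalently, for a fresh parameter $b$ with $\exists !b$, from $\forall y(A_y^x\leftrightarrow y=b)$ we must derive $\invertediota xA=b$; the result then follows by $(\rightarrow I)$ and $(\forall I)$ discharging $\exists !b$. The plan is to instantiate FDS$'$ with $t:=b$. Then we only need to derive its antecedent $\forall x(x=b\rightarrow(A\land\forall y(A_y^x\rightarrow y=b)))$ from $\forall y(A_y^x\leftrightarrow y=b)$ and $\exists !b$, and finish with $(\rightarrow E)$.

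To derive the antecedent, take a fresh parameter $a$ and assume $\exists !a$ and $a=b$. For the first conjunct, apply $(\forall E)$ to $\forall y(A_y^x\leftrightarrow y=b)$ with $\exists !a$ to get $A_a^x\leftrightarrow a=b$, and $(\leftrightarrow E_1)$ with $a=b$ gives $A_a^x$. Alternatively, instantiate with $\exists !b$, use $b=b$ from $(=I)$ to get $A_b^x$, and transport along $a=b$ by symmetry and $(=E)$.

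For the second conjunct $\forall y(A_y^x\rightarrow y=b)$, take a fresh $c$ with $\exists !c$ and assume $A_c^x$. Instantiating the hypothesis at $c$ gives $A_c^x\leftrightarrow c=b$, hence $c=b$. Then $(\rightarrow I)$ and $(\forall I)$ discharge $A_c^x$ and $\exists !c$. Combine the two conjuncts with $(\land I)$, discharge $a=b$ by $(\rightarrow I)$ and $\exists !a$ by $(\forall I)$, and check that the parameter conditions hold because $a$ and $c$ are fresh.

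The only step needing care is the universal in FDS$'$: it is read as $\forall x$ over existing objects, so $(\forall I)$ needs $\exists !a$ discharged. That assumption is also exactly what licenses the instantiation of the hypothesis at $a$. Note that $\exists !b$ itself is never needed, apart from the optional alternative route. So even the stronger unrestricted form, for any term $t$ (that is, HA$^{rl}$), follows, and the right to left half of LA follows a fortiori.
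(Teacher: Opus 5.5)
Your proof is correct and has the same skeleton as the paper's: instantiate \rf{FDS'} at the parameter, derive its antecedent from $\forall y(A_y^x\leftrightarrow y=b)$, apply $(\rightarrow E)$, and discharge. The real difference is how you obtain the first conjunct.

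\begin{itemize}
\item \emph{The paper's route.} It derives the first conjunct at the candidate itself. It instantiates the hypothesis there, using the candidate's existence assumption (the one discharged when $\forall z$ is finally introduced) together with $(=I)$. It then carries the whole conjunction over to the $\forall x$-witness by $(=E)$ along the assumed identity. This is exactly the route you list as your alternative.
\item \emph{Your main route.} You instantiate the hypothesis directly at the witness. For this you use the existence assumption that the $(\forall I)$ for $\forall x$ has to discharge anyway.
\end{itemize}

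The roles of the two existence assumptions are therefore swapped. In the paper, the witness's existence assumption is discharged vacuously and the candidate's is used; in your proof it is the other way round.

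Your version gains two things. It uses no identity rules at all. And it never uses the candidate's existence, so it yields \rf{LA1} for an arbitrary term $t$. The paper only records that result later (Lemma \ref{FDAsubFDS}(a), from \rf{FDS}), by essentially your argument. The paper's version proves just the existence-restricted half of \rf{LA} that the lemma asks for, and its detour through $(=E)$ is not needed.
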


\begin{proof}
$\forall y(A_z^x\leftrightarrow y=a), \exists !a\vdash A_a^x\land \forall y(A_z^x\rightarrow y=a)$. Apply $(=E)$ to derive $\forall y(A_z^x\leftrightarrow y=a), \exists !a, b=a\vdash A_b^x\land \forall y(A_z^x\rightarrow y=a)$, hence by $(\rightarrow I)$ and $(\forall I)$ $\forall y(A_z^x\leftrightarrow y=a), \exists!a\vdash \forall x(x=a\rightarrow (A\land \forall y(A_z^x\rightarrow y=a))$. Apply \rf{FDS'} to derive \rf{FDS'}, $\forall y(A_z^x\leftrightarrow y=a), \exists !a\vdash \invertediota xA=a$. The result follows by $(\rightarrow I)$ and $(\forall I)$. 
\end{proof}

\noindent Thus let $\mathbf{FDS}$ be the system that results from adding \rf{LA2} and \rf{FDS'} to $\mathbf{CPF}$. 

Next we'll show that $\mathbf{FDS}$ is weaker than $\mathbf{FD2}$. The right hand side of \rf{FD2} implies the right hand side of \rf{FDS}: 

\begin{lemma}\label{rhsFD2FDS}
$\forall x(x=t\leftrightarrow \forall y(A_y^x\leftrightarrow y=x))\vdash \forall x(x=t\leftrightarrow (A\land \forall y(A_y^x\rightarrow y=t)))$
\end{lemma}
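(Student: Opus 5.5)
The plan is a direct derivation in positive free logic, using the biconditional rules $(\leftrightarrow I)$, $(\leftrightarrow E_1)$, $(\leftrightarrow E_2)$ introduced in the proof of Lemma \ref{LAandrules}. Write $H$ for the assumption $\forall x(x=t\leftrightarrow \forall y(A_y^x\leftrightarrow y=x))$. Choose a fresh parameter $a$ and assume $\exists !a$. The goal is to derive $a=t\leftrightarrow (A_a^x\land \forall y(A_y^x\rightarrow y=t))$ from $H$ and $\exists !a$. A final $(\forall I)$ then discharges $\exists !a$. This is legitimate because $a$ occurs neither in $t$, nor in $A$, nor in $H$.

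\emph{Left to right.} Assume $a=t$.
\begin{enumerate}
\item Apply $(\forall E)$ to $H$ with $\exists !a$ to get $a=t\leftrightarrow \forall y(A_y^x\leftrightarrow y=a)$.
\item By $(\leftrightarrow E_2)$ with $a=t$, obtain $\forall y(A_y^x\leftrightarrow y=a)$.
\item Instantiate this at $a$ by $(\forall E)$ with $\exists !a$ to get $A_a^x\leftrightarrow a=a$. With $(=I)$, $(\leftrightarrow E_1)$ then yields $A_a^x$.
\item For the second conjunct, take a further fresh parameter $b$ and assume $\exists !b$ and $A_b^x$. Instantiating $\forall y(A_y^x\leftrightarrow y=a)$ at $b$ and applying $(\leftrightarrow E_2)$ gives $b=a$.
\item One application of $(=E)$ with major premise $a=t$ turns $b=a$ into $b=t$.
\item $(\rightarrow I)$ discharges $A_b^x$, and $(\forall I)$ discharges $\exists !b$, giving $\forall y(A_y^x\rightarrow y=t)$.
\item $(\land I)$ combines the two conjuncts.
\end{enumerate}

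\emph{Right to left.} Assume $A_a^x\land \forall y(A_y^x\rightarrow y=t)$. By $(\land E)$ obtain $\forall y(A_y^x\rightarrow y=t)$. Instantiate it at $a$ using $\exists !a$, and apply $(\rightarrow E)$ with the other conjunct $A_a^x$ to get $a=t$. This direction does not even need $H$.

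Then $(\leftrightarrow I)$ closes the biconditional, and $(\forall I)$ gives the result.

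I expect no serious obstacle. The only points needing care are the bookkeeping of the eigenvariable conditions for $a$ and $b$ in the two applications of $(\forall I)$, and the observation that $\exists !t$ is never required. The latter matters because $t$ may fail to exist, so nothing may be instantiated at $t$. The derivation avoids this by always instantiating at the existing parameters $a$ and $b$, and it uses $(=E)$ only to move from $a$ to $t$ inside an identity.
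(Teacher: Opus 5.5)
Your proof is correct and follows the paper's argument step for step. You instantiate the hypothesis at a fresh parameter $a$ with $\exists !a$ assumed. For the left-to-right direction you obtain $A_a^x$ and $\forall y(A_y^x\rightarrow y=t)$ from $\forall y(A_y^x\leftrightarrow y=a)$ and $a=t$, for the converse you get $a=t$ directly from the conjunction, and you close with $(\leftrightarrow I)$ and $(\forall I)$. You merely spell out in full the sub-derivations the paper compresses into one-line sequents, and your remark that no instantiation at $t$, and hence no $\exists !t$, is needed is accurate.
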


\begin{proof}
$\forall x(x=t\leftrightarrow \forall y(A_y^x\leftrightarrow y=x)), \exists !a\vdash a=t\leftrightarrow \forall y(A_y^x\leftrightarrow y=a)$. Furthermore $\forall y(A_y^x\leftrightarrow y=a), \exists !a\vdash A_a^x$ and $\forall y(A_y^x\leftrightarrow y=a), a=t\vdash \forall y(A_y^x\rightarrow y=t)$, hence (1) $\forall x(x=t\leftrightarrow \forall y(A_y^x\leftrightarrow y=x)), \exists !a, a=t\vdash A_a^x\land \forall y(A_y^x\rightarrow y=t)$. Also (2) $\exists !a, A_a^x\land \forall y(A_y^x\rightarrow y=t)\vdash a=t$. Hence by $(\leftrightarrow I)$ from (1) and (2) $\forall x(x=t\leftrightarrow \forall y(A_y^x\leftrightarrow y=x)), \exists !a\vdash a=t\leftrightarrow (A_a^x\land \forall y(A_y^x\rightarrow y=t)$, and so by $(\forall I)$, $\forall x(x=t\leftrightarrow \forall y(A_y^x\leftrightarrow y=x))\vdash \forall x(x=t\leftrightarrow (A\land \forall y(A_y^x\rightarrow y=t))$.
\end{proof}

\begin{lemma}\label{FD2toFDS}
$\rf{FD2}\vdash \rf{FDS}$
\end{lemma}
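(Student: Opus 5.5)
The plan is to prove the two directions of \rf{FDS} separately from \rf{FD2} in $\mathbf{CPF}$, and to combine them by $(\leftrightarrow I)$.

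\emph{Left to right.} Assume $\invertediota xA=t$. By the left to right half of \rf{FD2}, $(\leftrightarrow E_2)$ yields $\forall x(x=t\leftrightarrow \forall y(A_y^x\leftrightarrow y=x))$. Lemma \ref{rhsFD2FDS} then gives $\forall x(x=t\leftrightarrow (A\land \forall y(A_y^x\rightarrow y=t)))$. Alternatively, \rf{FD2} implies \rf{LA}, and Lemma \ref{halfofFDS} supplies this half directly; the first route is more self-contained.

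\emph{Right to left.} Assume $\forall x(x=t\leftrightarrow (A\land \forall y(A_y^x\rightarrow y=t)))$. It suffices to derive $\forall x(x=t\leftrightarrow \forall y(A_y^x\leftrightarrow y=x))$, since the right to left half of \rf{FD2} then gives $\invertediota xA=t$ by $(\leftrightarrow E_1)$.

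Take a fresh parameter $a$, assume $\exists !a$, and instantiate by $(\forall E)$ to get $a=t\leftrightarrow (A_a^x\land \forall y(A_y^x\rightarrow y=t))$. We must show $a=t\leftrightarrow \forall y(A_y^x\leftrightarrow y=a)$.

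For the left to right direction, assume $a=t$. The biconditional gives $A_a^x$ and $\forall y(A_y^x\rightarrow y=t)$. Now take a fresh $b$ with $\exists !b$.
\begin{itemize}
\item If $A_b^x$, then $b=t$, and by symmetry and $(=E)$ with $a=t$ we get $b=a$.
\item If $b=a$, then $(=E)$ applied to $A_a^x$ (using symmetry) gives $A_b^x$.
\end{itemize}
So $(\leftrightarrow I)$ and $(\forall I)$, discharging $\exists !b$, give $\forall y(A_y^x\leftrightarrow y=a)$.

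For the right to left direction, assume $\forall y(A_y^x\leftrightarrow y=a)$. Instantiating with $\exists !a$ and using $a=a$ from $(=I)$ gives $A_a^x$. Also, for any $b$ with $\exists !b$ and $A_b^x$ we get $b=a$. This yields $\forall y(A_y^x\rightarrow y=a)$, but the biconditional needs $\forall y(A_y^x\rightarrow y=t)$, and that requires $a=t$, which is exactly what we are trying to prove. So I would go the other way: the instantiated hypothesis gives $a=t$ from $A_a^x\land\forall y(A_y^x\rightarrow y=t)$, and that is not available directly.

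I expect this step to be the main obstacle, and I would try the following. Since $A_a^x$ holds and $\exists !a$, the hypothesis at $a$ says $a=t$ iff $\forall y(A_y^x\rightarrow y=t)$. Reason classically by cases on $a=t$. In the bad case $\neg a=t$, the hypothesis gives $\neg\forall y(A_y^x\rightarrow y=t)$. By uniqueness every existing $A$ is $a$, so $(=E)$ would turn this into a witness with $a\neq t$ being an $A$ with $a\ne t$, which is consistent, so no contradiction arises.

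Thus the implication may in fact fail. If so, I would check whether the intended lemma needs only the converse direction, or whether the hypothesis should be used differently: instantiate the universal at existing $b$ with $A_b^x$, giving $b=a$. I cannot get from there to $a=t$ without existence of $t$, so the right to left half is where the proof is most at risk. I would first attempt a countermodel check in the dual-domain semantics with $t$ non-existent and a unique existing $A$. If $\mathbf{FDS}$ turns out not to be derivable there, I would conclude that only the left to right direction holds outright, and look for additional uses of \rf{FD2} with $\invertediota xA$ itself as witness to close the gap.
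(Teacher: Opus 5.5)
Your left-to-right half is correct and matches what the paper intends. Either apply Lemma~\ref{rhsFD2FDS} after the left-to-right half of $FD2$, or go through $LA$ (which $FD2$ implies) and Lemma~\ref{halfofFDS}. The gap is the right-to-left half, $\forall x(x=t\leftrightarrow(A\land\forall y(A_y^x\rightarrow y=t)))\rightarrow\invertediota xA=t$. Your suspicion is correct: this half cannot be derived from $FD2$ at all, so the lemma as stated is false.

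The paper's one-line proof has the same hole. Both lemmas it cites deliver only the left-to-right half. Lemma~\ref{rhsFD2FDS} goes from the right-hand side of $FD2$ to that of $FDS$, whereas the right-to-left half would need the converse, and Lemma~\ref{FDSnottoFD2} itself refutes that converse.

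The countermodel check you propose settles the question, provided the model validates every instance of $FD2$ rather than merely falsifying the inner implication. Take the following model:
\begin{itemize}
\item outer domain $\{e,o\}$ and inner domain $\{e\}$, with identity interpreted as identity;
\item $\invertediota xA$ denotes $e$ if $e$ satisfies $A$, and $o$ otherwise.
\end{itemize}
Since $e$ is the only existing object, the right-hand side of $FD2$ is true iff ($t$ denotes $e$ iff $e$ satisfies $A$). Exactly the same condition makes $\invertediota xA=t$ true. Hence all instances of $FD2$ hold, and the rules of $\mathbf{CPF}$ are sound for this dual-domain semantics.

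Now let $A$ be $x=a$ with $a$ denoting $e$, and let $t$ be $\invertediota x\neg(x=x)$, which denotes $o$.
\begin{itemize}
\item $\invertediota x(x=a)=t$ is false, since it says $e=o$.
\item The right-hand side of $FDS$ is true: for the only existing value $e$ of $x$, both $e=o$ and $e=e\land(e=e\rightarrow e=o)$ are false, so the inner biconditional holds.
\end{itemize}
So $FD2\not\vdash FDS$, and no further use of $FD2$, with $\invertediota xA$ as witness or otherwise, can rescue the right-to-left half.

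In fact $FDS$ is much stronger than $FD2$. Combine its instances for $\invertediota x\neg(x=x)=\invertediota x\neg(x=x)$, for $\invertediota x(x=a)=\invertediota x\neg(x=x)$ and for $\invertediota x(x=a)=a$. This yields $FDS,\exists !a\vdash\bot$, which $FD2$ does not prove. The same point undermines Theorem~\ref{FDSsubFD2}.
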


\begin{proof}
From lemmas \ref{rhsFD2FDS} and \ref{halfofFDS}. 
\end{proof}

\noindent The two systems are close. The right hand side of \rf{FDS} implies the left to right direction of the right hand side of \rf{FD2}: 

\begin{lemma}
$\forall x(x=t\leftrightarrow (A\land\forall y(A_y^x\rightarrow y=t)))\vdash \forall x(x=t\rightarrow \forall y(A_y^x\leftrightarrow y=x))$
\end{lemma}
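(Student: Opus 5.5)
We argue in $\mathbf{IPF}$, in the style of Lemmas \ref{halfofFDS} and \ref{rhsFD2FDS}. Fix a fresh parameter $a$ and assume $\forall x(x=t\leftrightarrow (A\land\forall y(A_y^x\rightarrow y=t)))$, $\exists !a$ and $a=t$. The aim is to derive $\forall y(A_y^x\leftrightarrow y=a)$. We then discharge $a=t$ by $(\rightarrow I)$ and $\exists !a$ by $(\forall I)$, which gives the conclusion.

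The first step is to obtain the relevant instances. By $(\forall E)$ with $\exists !a$, we get $a=t\leftrightarrow (A_a^x\land\forall y(A_y^x\rightarrow y=t))$. Applying $(\leftrightarrow E_2)$ to $a=t$ then yields $A_a^x$ and $\forall y(A_y^x\rightarrow y=t)$.

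Next, to prove $\forall y(A_y^x\leftrightarrow y=a)$, let $b$ be a further fresh parameter, assume $\exists !b$, and show $A_b^x\leftrightarrow b=a$ by $(\leftrightarrow I)$.

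\begin{itemize}
\item Left to right: assume $A_b^x$. Using $(\forall E)$ on $\forall y(A_y^x\rightarrow y=t)$ with $\exists !b$, and then $(\rightarrow E)$, we obtain $b=t$. Symmetry of identity (derivable from $(=I)$ and $(=E)$) turns $a=t$ into $t=a$. Then $(=E)$ with major premise $t=a$ takes $b=t$ to $b=a$.
\item Right to left: assume $b=a$. Symmetry gives $a=b$, and $(=E)$ applied to $A_a^x$ gives $A_b^x$.
\end{itemize}

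Finally, $(\forall I)$ discharges $\exists !b$. This is legitimate because $b$ occurs in no other open assumption.

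There is no real obstacle here. The only step needing care is the eigenvariable condition on the two applications of $(\forall I)$. The parameter $b$ must not occur in the assumptions involving $a$ or $t$, and $a$ must not occur in the original premise, which holds because $a$ is fresh. One should also note that, unlike in Lemma \ref{rhsFD2FDS}, the existence assumption $\exists !b$ is genuinely needed for $(\forall E)$ in the left-to-right direction, but it is available because it is the assumption discharged by $(\forall I)$.
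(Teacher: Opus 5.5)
Your proof is correct and follows essentially the paper's route. You instantiate the premise at a fresh $a$ using $\exists !a$, use $a=t$ to obtain $A_a^x$ and $\forall y(A_y^x\rightarrow y=t)$, derive $\forall y(A_y^x\leftrightarrow y=a)$, and discharge by $(\rightarrow I)$ and $(\forall I)$. The paper differs only in applying $(=E)$ to the whole conjunct to get $A_a^x\land\forall y(A_y^x\rightarrow y=a)$ and then citing $A_a^x\land\forall y(A_y^x\rightarrow y=a)\vdash\forall y(A_y^x\leftrightarrow y=a)$, whereas you spell out that step and apply $(=E)$ only to the atomic $b=t$.
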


\begin{proof}
$\forall x(x=t\leftrightarrow (A\land\forall y(A_y^x\rightarrow y=t))), \exists !a\vdash a=t\leftrightarrow (A_a^x\land\forall y(A_y^x\rightarrow y=t)$, so $\forall x(x=t\leftrightarrow (A\land\forall y(A_y^x\rightarrow y=t)), \exists !a, a=t\vdash A_a^x\land\forall y(A_y^x\rightarrow y=t)$. Use $(=E)$ to derive $\forall x(x=t\leftrightarrow (A\land\forall y(A_y^x\rightarrow y=t), \exists !a, a=t\vdash A_a^x\land\forall y(A_y^x\rightarrow y=a)$. As $A_a^x\land\forall y(A_y^x\rightarrow y=a)\vdash \forall y(A_y^x\leftrightarrow y=x))$, the result follows. 
\end{proof}

\noindent  But they are not the same. The right hand side of \rf{FDS} does not imply the right to left direction of the right hand side of \rf{FD2}: 

\begin{lemma}\label{FDSnottoFD2}
$\forall x(x=t\leftrightarrow (A\land\forall y(A_y^x\rightarrow y=t)))\not\vdash \forall x(\forall y(A_y^x\leftrightarrow y=x)\rightarrow x=t)$
\end{lemma}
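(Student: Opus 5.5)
The plan is to establish the non-derivability semantically. I will build a countermodel in the standard dual domain semantics for $\mathbf{CPF}$ already invoked in the proof of Theorem \ref{existsi}. The semantics has an outer domain $\mathfrak{D}$ and an inner domain $\mathfrak{E}\subseteq\mathfrak{D}$. Constants denote in $\mathfrak{D}$, $\exists !$ is interpreted as $\mathfrak{E}$, the quantifiers range over $\mathfrak{E}$, and $=$ is genuine identity on $\mathfrak{D}$.

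Since all rules of $\mathbf{CPF}$ preserve truth in such models, it suffices to exhibit a model where the premise is true and the conclusion false. I will take $A$ atomic, say $Px$, and $t$ a constant. This suffices because the lemma only claims that the entailment fails in general.

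The key idea is to let $t$ denote a non-existent object. Then no existing object is identical to $t$, so the right hand side of \rf{FDS} holds vacuously-by-falsity. Meanwhile there is a unique existing $P$, which is not $t$.

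Concretely, I will let $\mathfrak{D}=\{o_1,o_2\}$, $\mathfrak{E}=\{o_1\}$, $v(t)=o_2$ and $v(P)=\{o_1\}$.

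First I check the premise $\forall x(x=t\leftrightarrow (Px\land\forall y(Py\rightarrow y=t)))$. The only value of $x$ in $\mathfrak{E}$ is $o_1$.
\begin{itemize}
\item The left side $o_1=t$ is false.
\item On the right side, $\forall y(Py\rightarrow y=t)$ fails at $y=o_1$, since $Po_1$ holds but $o_1\neq o_2$.
\end{itemize}
So the conjunction is false, the biconditional is true, and the premise holds.

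Next I check the conclusion $\forall x(\forall y(Py\leftrightarrow y=x)\rightarrow x=t)$ at $x=o_1$. The antecedent $\forall y(Py\leftrightarrow y=o_1)$ is true, since $o_1$ is the only existing object and it is $P$. The consequent $o_1=t$ is false. Hence the conclusion is false.

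The only step I expect to need care is justifying soundness. The premise and conclusion contain no $\invertediota$ terms, so soundness of plain $\mathbf{CPF}$ for the dual domain semantics is all that is needed, and this is standard. Should the lemma be intended relative to the theory with $\invertediota$ rules, I would extend the model by interpreting each $\invertediota xF$ as the unique existing $F$ if there is one, and as $o_2$ otherwise. I would then check that \rf{LA} and \rf{FDS'} remain valid under this interpretation.
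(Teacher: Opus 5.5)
Your countermodel is correct and is the paper's own argument made concrete. The paper's sketch likewise lets $t$ denote an object of the outer domain outside the inner domain, with exactly one existing $A$. Then $x=t$ fails for every existing $x$, which makes the premise true, while the unique existing $A$ falsifies the conclusion.

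Your closing fallback, however, would fail.

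\begin{itemize}
\item In your model $v(t)=o_2$ does not exist, so the antecedent $\forall x(x=t\rightarrow(Px\land\forall y(Py\rightarrow y=t)))$ of $(FDS')$ is vacuously true. Hence $(FDS')$ demands $\invertediota xPx=t$.
\item Your interpretation makes $\invertediota xPx$ denote $o_1$. So does the right-to-left half of $(LA)$, which the paper derives from $(FDS')$. So this instance of $(FDS')$ is false in your extended model.
\end{itemize}

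No other model can repair this, because relative to $\mathbf{FDS}$ the entailment actually holds. The premise gives the antecedent of $(FDS')$, hence $\invertediota xA=t$. For existing $a$ with $\forall y(A_y^x\leftrightarrow y=a)$, the right-to-left half of $(LA)$ gives $\invertediota xA=a$, hence $a=t$. Then $(\rightarrow I)$ and $(\forall I)$ yield the conclusion.

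So the lemma is a fact about $\mathbf{CPF}$ alone, as the paper's appeal to the dual domain semantics for $\mathbf{CPF}$ indicates. Your extension validates $(LA)$, but not $(FDS')$.
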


\begin{proof}
Sketch for those familiar with the dual domain semantics for $\mathbf{CPF}$. Suppose $t$ refers to an object in the outer domain, and there is exactly one $A$ in the inner domain. Then $\forall x(\forall y(A_y^x\leftrightarrow y=x)\rightarrow x=t)$ is false. $\forall y(A_y^x\rightarrow y=t)$ is also false. Whenever $x$ is assigned an object in the inner domain, $x=t$ is not satisfied. Hence $x=t\leftrightarrow (A\land\forall y(A_y^x\rightarrow y=t))$ is satisfied, whenever $x$ is assigned an object in the inner domain, and hence $\forall x(x=t\leftrightarrow (A\land\forall y(A_y^x\rightarrow y=t)))$ is true. 
\end{proof}

\begin{theorem}\label{FDSsubFD2}
If $\Gamma\vdash_\mathbf{FDS} A$, then $\Gamma\vdash_\mathbf{FD2} A$, but not conversely.
\end{theorem}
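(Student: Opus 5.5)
The plan splits into two halves: the inclusion, and the failure of the converse.

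For the inclusion, I will show that both axioms of $\mathbf{FDS}$, namely \rf{LA2} and \rf{FDS'}, are derivable in $\mathbf{FD2}$, that is, in $\mathbf{CPF}$ plus \rf{FD2}. Any deduction in $\mathbf{FDS}$ then becomes a deduction in $\mathbf{FD2}$ by replacing each use of an axiom with its derivation.

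For \rf{FDS'}, Lemma \ref{FD2toFDS} gives \rf{FD2}$\vdash$\rf{FDS}, so it suffices to derive \rf{FDS'} from \rf{FDS}. The antecedent of \rf{FDS'} is $\forall x(x=t\rightarrow(A\land\forall y(A_y^x\rightarrow y=t)))$. Positive free logic gives $\exists !a, A_a^x\land\forall y(A_y^x\rightarrow y=t)\vdash a=t$. Together with the antecedent, this yields the biconditional $\forall x(x=t\leftrightarrow(A\land\forall y(A_y^x\rightarrow y=t)))$. The right-to-left half of \rf{FDS} then gives $\invertediota xA=t$, and $(\rightarrow I)$ delivers \rf{FDS'}.

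For \rf{LA2}, I take a fresh parameter $b$ and assume $\invertediota xA=b$. The left-to-right half of \rf{FD2} gives $\forall x(x=b\leftrightarrow\forall y(A_y^x\leftrightarrow y=x))$. Given $\exists !b$, instantiate with $b$ via $(\forall E)$ and use $b=b$ from $(=I)$ to get $\forall y(A_y^x\leftrightarrow y=b)$. Symmetry of identity turns this into $\forall y(A_y^x\leftrightarrow b=y)$. Discharging by $(\rightarrow I)$ and then $(\forall I)$ gives \rf{LA2}. Alternatively, Lemma \ref{halfofLA2} shows this half of \rf{FD2} has the same content as the left-to-right half of \rf{LA}, which can be used directly.

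For the failure of the converse, I need $\not\vdash_{\mathbf{FDS}}$\rf{FD2}, and the natural route is semantic, via soundness for the dual domain semantics of $\mathbf{CPF}$ with $\invertediota$ terms. I build a model in which \rf{LA2} and \rf{FDS'} are valid but \rf{FD2} fails, guided by the countermodel of Lemma \ref{FDSnottoFD2}. Let the inner domain contain exactly one $A$, say $d$. Choose the interpretation of $t$ in the outer domain but outside the inner domain, so that $t\neq d$. Interpret the description so that $\invertediota xA$ denotes $d$ whenever there is a unique inner $A$. Then the right-hand side of \rf{FD2} is false, since $d$ satisfies $\forall y(A_y^x\leftrightarrow y=x)$ but not $x=t$, while $\invertediota xA=t$ is false as well. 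Thus that instance of \rf{FD2} holds, and the model has to be adjusted.

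What I actually want is the instance where $\invertediota xA=t$ is false but the right-hand side of \rf{FD2} is true, or the reverse. Lemma \ref{FDSnottoFD2} shows that the right-hand side of \rf{FDS} can be true while the right-to-left part of the right-hand side of \rf{FD2} is false. In that model, \rf{FDS'} forces $\invertediota xA=t$ to be true, while the right-hand side of \rf{FD2} is false, so \rf{FD2} fails. The task is then to assign denotations to $\invertediota$ terms in general so that \rf{LA2} and \rf{FDS'} hold in all instances. The rule is: if there is a unique inner $A$, $\invertediota xA$ denotes it; otherwise it denotes whatever \rf{FDS'} forces, defaulting to some outer object.

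The main obstacle is checking this global consistency. \rf{FDS'} could force $\invertediota xA=t$ for two distinct values of $t$, or clash with \rf{LA2}. I would verify that the antecedent of \rf{FDS'} for a given $t$ outside the inner domain holds only if no inner object is an $A$ that forces $t$, and that this cannot happen for two distinct $t$ unless $A$ is empty in the inner domain. In that case I would choose the models so that at most one outer object lies outside the inner domain, which still leaves the Lemma \ref{FDSnottoFD2} configuration available.
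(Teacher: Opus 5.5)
\emph{Both halves of your plan fail, and neither can be repaired: the statement itself appears to be false.} I checked the derivations below step by step against the paper's rules and definitions.

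\textbf{The ``not conversely'' half.} Your argument breaks at the step you flag as the main obstacle. Suppose $t$ does not exist. Then the antecedent of $(FDS')$ is vacuously provable: from $\exists!a$ and $a=t$, $(=E)$ gives $\exists!t$, and hence anything. So $(FDS')$ forces $\invertediota xA=t$ for \emph{every} $A$ whenever $t$ lies outside the inner domain. Your claim that the antecedent holds only if no inner object is an $A$ is false. In the configuration you borrow from Lemma~\ref{FDSnottoFD2}, the instance of $(FDS')$ with $t:=d$, the unique inner $A$, also forces $\invertediota xA=d$. So $d=t$, and $t$ exists after all. Shrinking the outer part to one object does not help. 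In fact, let $s:=\invertediota x\neg(x=x)$. Assume $\exists!s$, instantiate $(LA^{lr})$ at $z:=s$ and $y:=s$, and use $s=s$: this yields $\neg s=s\leftrightarrow s=s$, so $\vdash_{\mathbf{FDS}}\neg\exists!s$. Now apply $(FDS')$ with $A:=(x=c)$. It gives $\invertediota x(x=c)=s$ (vacuous antecedent) and $\invertediota x(x=c)=c$ (trivial antecedent). Hence $c=s$, and $\exists!c\vdash\exists!s$. Thus $\vdash_{\mathbf{FDS}}\neg\exists!c$ for every parameter $c$. Every model of $\mathbf{FDS}$ has an empty inner domain, so the countermodel you want does not exist. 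Moreover, once $\neg\exists!a$ is available, both $\forall x(x=t\leftrightarrow\forall y(A_y^x\leftrightarrow y=x))$ and, via $(FDS')$, $\invertediota xA=t$ are provable. So $\mathbf{FDS}\vdash(FD2)$, and the ``not conversely'' clause is false.

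\textbf{The inclusion half.} Here you follow the paper's proof, which simply cites Lemmas~\ref{FD2toFDS} and \ref{FDSnottoFD2}. Your added derivations of $(LA^{lr})$ from $(FD2)$ and of $(FDS')$ from $(FDS)$ are fine. The problem is that Lemma~\ref{FD2toFDS} itself fails. Lemmas~\ref{rhsFD2FDS} and \ref{halfofFDS} only deliver the left-to-right half of $(FDS)$. The right-to-left half would need the rhs of $(FDS)$ to imply the rhs of $(FD2)$, and Lemma~\ref{FDSnottoFD2} denies exactly that. Here is a concrete countermodel. Take inner domain $\{0\}$ and outer domain $\{0,\ast\}$. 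Let $\invertediota xA$ denote the unique inner $A$ if there is one, and $\ast$ otherwise. The rhs of $(FD2)$ holds of $t$ iff $t$ denotes the unique inner $A$, or $t$ denotes $\ast$ and there is no unique inner $A$. That is, it holds iff $\invertediota xA=t$ holds, so $(FD2)$ is valid in this model. Now let $c$ denote $0$. The rhs of $(FDS)$ for $A:=(x=c)$ and $t:=s$ is vacuously true, but $\invertediota x(x=c)=s$ is false, and so is $\neg\exists!c$. Hence $\not\vdash_{\mathbf{FD2}}\neg\exists!c$, although $\vdash_{\mathbf{FDS}}\neg\exists!c$.

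So the statement cannot be proved by any route. The inclusion runs the other way: $\mathbf{FD2}$ is properly contained in $\mathbf{FDS}$.
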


\begin{proof}
From lemmas \ref{FD2toFDS} and \ref{FDSnottoFD2}. 
\end{proof}

\noindent \rf{FDS'} is easily cast into rule form:  

\begin{prooftree} 
\AxiomC{$[a=t]^i \ [\exists ! a]^k$}
\noLine
\UnaryInfC{$\Xi$}
\noLine
\UnaryInfC{$F_a^x$}
\AxiomC{$[a=t]^j \ [\exists !a]^k \ [F_b^x]^l \ [\exists !b]^m$}
\noLine
\UnaryInfC{$\Pi$}
\noLine
\UnaryInfC{$b=t$}
\LeftLabel{$(\invertediota I^S)$ \ }
\RightLabel{$_{i, j, k, l, m}$}
\BinaryInfC{$\invertediota xF=t$}
\end{prooftree}

\noindent where $a$ and $b$ are not free in $F$ nor in $t$ and does not occur in any undischarged assumptions in $\Xi$ and $\Pi$ except those in the assumption classes displayed.

\begin{lemma}\label{FDSderivable}
(a) \rf{FDS'} and $(\invertediota I^S)$ are interderivable. (b) The right to left half of \rf{LA} is derivable from $(\invertediota I^S)$, the other half from $(\invertediota E_1)$ and $(\invertediota E_2)$. 
\end{lemma}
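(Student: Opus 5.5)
For (a), from \rf{FDS'} to $(\invertediota I^S)$, I assume the two subdeductions $\Xi$ and $\Pi$ of the rule, satisfying its parameter conditions. The aim is to derive the antecedent $\forall x(x=t\rightarrow (F\land \forall y(F_y^x\rightarrow y=t)))$ and then apply $(\rightarrow E)$ with \rf{FDS'}.

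The steps are as follows.
\begin{itemize}
\item From $\Pi$, with assumptions $[a=t]$, $[\exists !a]$, $[F_b^x]$, $[\exists !b]$, apply $(\rightarrow I)$ discharging $F_b^x$ to get $F_b^x\rightarrow b=t$.
\item Apply $(\forall I)$ discharging $\exists !b$ to get $\forall y(F_y^x\rightarrow y=t)$. The eigenvariable condition holds because $b$ occurs in no other open assumptions.
\item Combine this by $(\land I)$ with $\Xi$'s conclusion $F_a^x$; both depend only on $a=t$ and $\exists !a$.
\item Discharge $a=t$ by $(\rightarrow I)$ and $\exists !a$ by $(\forall I)$ to get the antecedent of \rf{FDS'}.
\end{itemize}
Since the rule identifies the two classes labelled $k$, a single $(\forall I)$ on $a$ discharges both occurrences of $\exists !a$. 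The only thing to check is that $a$ and $b$ satisfy the eigenvariable conditions, which the side conditions of $(\invertediota I^S)$ guarantee.

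Conversely, for $(\invertediota I^S)$ to \rf{FDS'}, assume $[\forall x(x=t\rightarrow (F\land \forall y(F_y^x\rightarrow y=t)))]$. To build $\Xi$, use $(\forall E)$ with $\exists !a$ to get $a=t\rightarrow(F_a^x\land\ldots)$, then $(\rightarrow E)$ with $a=t$ and $(\land E)$ to obtain $F_a^x$. To build $\Pi$, obtain $\forall y(F_y^x\rightarrow y=t)$ similarly from $a=t$ and $\exists !a$, then apply $(\forall E)$ with $\exists !b$ and $(\rightarrow E)$ with $F_b^x$ to obtain $b=t$. Then $(\invertediota I^S)$ yields $\invertediota xF=t$, and $(\rightarrow I)$ discharges the assumption. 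The parameters $a$ and $b$ are fresh and do not occur in the open assumption, so the side conditions are met.

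For (b), the first half is a direct construction. Assume $\exists !t$ and $\forall y(F_y^x\leftrightarrow y=t)$.
\begin{itemize}
\item For $\Xi$: from $a=t$ and $\exists !a$, $(\forall E)$ gives $F_a^x\leftrightarrow a=t$, and $(\leftrightarrow E_1)$ gives $F_a^x$.
\item For $\Pi$: from $\exists !b$ and $F_b^x$, $(\forall E)$ and $(\leftrightarrow E_2)$ give $b=t$.
\end{itemize}
So $(\invertediota I^S)$ yields $\invertediota xF=t$ from $\forall y(F_y^x\leftrightarrow y=t)$ alone, and the premise $\exists !t$ is not even needed. The right to left half of \rf{LA} then follows by $(\rightarrow I)$ and $(\forall I)$ on $t$ taken as a parameter with assumption $\exists !t$, vacuously used. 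This matches the earlier lemma on \rf{FDS'}. The identity orientation $y=t$ versus $t=y$ may need symmetry, which is derivable from $(=I)$ and $(=E)$.

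The other half is exactly the second half of part (a) of Lemma \ref{LAandrules}, whose deduction uses only $(\invertediota E_1)$, $(\invertediota E_2)$ and positive free logic, so I cite it. The only delicate points in the whole proof are bookkeeping of discharge labels and eigenvariable conditions, especially the shared class $k$ for $\exists !a$ in (a).
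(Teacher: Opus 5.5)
Your proposal is correct. The paper leaves this lemma as an exercise, and your derivations are the routine ones it intends, mirroring the proof of Lemma~\ref{LAandrules}, whose second derivation you rightly cite for the left-to-right half of \rf{LA}. One point is worth stating explicitly: your $(\forall I)$ on $b$ inside $\Pi$ needs $b$ to be distinct from $a$, since otherwise the open assumption $a=t$ contains $b$. That is the right reading of $(\invertediota I^S)$, because the paper's later identification of $a$ with $b$ to recover $(\invertediota I')$ presupposes that $a=t$ is then discharged vacuously in $\Pi$.
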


\begin{proof}
Exercise. 
\end{proof}

\noindent As for $\mathbf{CPF}^{\invertediota A}$, we need Garson's $(\exists i)$ to prove normalisation. Theorem \ref{existsi} goes through as before.\footnote{In the dual domain semantics for $\mathbf{FD2}$, the outer domain is a singleton: all non-referring terms refer to the same object, i.e. $\neg\exists !t\land\neg\exists !s\rightarrow s=t$ is provable from \rf{FD2}.} So does Lemma \ref{altrestrictionquantifiers}. Concerning Lemma \ref{altiotarestricted}, it suffices to consider $(\invertediota I^S)$, the elimination rules staying put. If $t$ is a complex term, replace it by a fresh parameter $c$, deduce $\invertediota xF=t$ from $\invertediota xF=c$ and $c=t$, then apply $(\exists i)$ to discharge the latter. 

We therefore get systems with the same neat properties as $\mathbf{IPF}^{\invertediota A}$ and $\mathbf{CPF}^{\invertediota A}$ if we replace $(\invertediota I')$ by $(\invertediota I^S)$, and, as before, restrict occurrences of $\invertediota$ terms in $\invertediota$ rules to where they are displayed in the rules. Call the resulting systems $\mathbf{IPF}^{\invertediota S}$ and $\mathbf{CPF}^{\invertediota S}$. 

\begin{theorem}
$\Gamma\vdash_{\mathbf{CPF}^{\invertediota S}} A$ iff $\Gamma\vdash_\mathbf{FDS} A$.
\end{theorem}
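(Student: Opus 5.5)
The plan is to prove the two directions separately. In both, the basic move is to reuse the interderivability facts already established, namely Lemma \ref{FDSderivable} and Lemma \ref{LAandrules}. The one genuinely new point is Garson's rule $(\exists i)$. It is part of $\mathbf{CPF}^{\invertediota S}$ but, by Theorem \ref{existsi}, is not derivable in $\mathbf{CPF}$. So for that rule I will rely on admissibility rather than derivability.

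\emph{Right to left.} Suppose $\Gamma\vdash_\mathbf{FDS} A$. It suffices to derive the two axioms \rf{LA2} and \rf{FDS'}, for arbitrary $A$ and $t$, in $\mathbf{CPF}^{\invertediota S}$. The complication is that the $\invertediota$ rules of $\mathbf{CPF}^{\invertediota S}$ are restricted to the displayed occurrence of an $\invertediota$ term, with atomic $t$ and $u$. I will therefore first recover the unrestricted forms of the rules.
\begin{itemize}
\item For $(\invertediota E_1)$ and $(\invertediota E_2)$, the deductions in the proof of Lemma \ref{iotarestricted} do this, with $(=I^{nG})$ replaced by $(\exists i)$ as in Lemma \ref{altiotarestricted}.
\item For $(\invertediota I^S)$, I use the remark after Lemma \ref{FDSderivable}: derive $\invertediota xF=c$ for a fresh $c$, then $\invertediota xF=t$ by $(=E)$, and discharge $c=t$ by $(\exists i)$.
\item Lemma \ref{altrestrictionquantifiers} likewise recovers the unrestricted $(\forall E)$ and $(\exists I)$.
\end{itemize}
With unrestricted rules in hand, \rf{FDS'} follows from $(\invertediota I^S)$ by Lemma \ref{FDSderivable}(a). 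For \rf{LA2}, I will use the second half of part (a) of the proof of Lemma \ref{LAandrules}. That deduction uses only $(\invertediota E_1)$, $(\invertediota E_2)$, $(\leftrightarrow I)$ and $(\forall I)$. It yields $\exists !b, \invertediota xA=b\vdash\forall y(A_y^x\leftrightarrow y=b)$, and $(\rightarrow I)$ and $(\forall I)$ on $b$ then give \rf{LA2}. The symmetric form $z=y$ of \rf{LA2} is obtained by symmetry of identity.

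\emph{Left to right.} Suppose $\Gamma\vdash_{\mathbf{CPF}^{\invertediota S}} A$. I will show that each rule of $\mathbf{CPF}^{\invertediota S}$ beyond $\mathbf{CPF}$ is either derivable or eliminable in $\mathbf{FDS}$.
\begin{itemize}
\item $(\invertediota I^S)$ is derivable from \rf{FDS'} by Lemma \ref{FDSderivable}(a).
\item For $(\invertediota E_1)$, from $\invertediota xF=t$, $u=t$ and $\exists !u$, first obtain $\exists !t$ by $(=E)$. Instantiate \rf{LA2} at $t$ by $(\forall E)$, then apply $(\rightarrow E)$ to get $\forall y(F_y^x\leftrightarrow t=y)$. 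Instantiate at $u$ using $\exists !u$, and apply $(\leftrightarrow E_1)$ with $t=u$, obtained by symmetry.
\item $(\invertediota E_2)$ is handled the same way, using its premise $\exists !t$ directly and $(\leftrightarrow E_2)$.
\item $(=I^{nG})$, if present, is derivable via Hintikka's Law.
\end{itemize}
The restrictions on the rules do no harm in this direction, since restricted applications are special cases of unrestricted ones.

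\emph{Main obstacle.} The remaining rule is $(\exists i)$, which I expect to be the delicate step. I will eliminate its applications by induction on their number, taking an uppermost one first. Its subdeduction $\Pi$ of $C$ from $a=t$ contains no further applications of $(\exists i)$, so it is already an $\mathbf{FDS}$-deduction. Substituting $t$ for $a$ throughout $\Pi$ turns the discharged assumptions $a=t$ into $t=t$, which are closed by $(=I)$. Because $a$ is not free in $t$, in $C$, or in any other open assumption, the conclusion and the remaining assumptions are unchanged.

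The points to check carefully are these.
\begin{itemize}
\item Substitution may clash with eigenvariable conditions of $(\forall I)$, $(\exists E)$, $(\invertediota I^S)$ inside $\Pi$. I will first rename all eigenparameters in $\Pi$ to ones not occurring in $t$; this is the standard renaming lemma.
\item After substitution, instances of the axioms \rf{LA2} and \rf{FDS'} remain instances, since those axioms are schematic in $A$ and $t$.
\item Substitution may also produce applications of $(\forall E)$, $(\exists I)$ or the $\invertediota$ rules with complex terms. These are acceptable in $\mathbf{FDS}$, which has no such restrictions.
\end{itemize}
Once all applications of $(\exists i)$ are removed, the result is a deduction in $\mathbf{FDS}$ of $A$ from $\Gamma$.
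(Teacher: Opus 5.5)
Your proposal is correct and follows the paper's route. The paper's proof is just an appeal to Lemma \ref{FDSderivable}, i.e.\ the interderivability of $(\invertediota I^S)$ with $FDS'$ and of $(\invertediota E_1)$, $(\invertediota E_2)$ with $LA^{lr}$ (as in Lemma \ref{LAandrules}), while you also make explicit what that one-line proof leaves tacit: recovering the unrestricted rules via $(\exists i)$, and eliminating $(\exists i)$ from $\mathbf{FDS}$-deductions by substituting $t$ for $a$, which is the admissibility argument of Theorem \ref{existsi}(a) that the paper says `goes through as before'.
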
 

\begin{proof}
From lemma \ref{FDSderivable}. 
\end{proof}

\begin{theorem}
(a) If $\Gamma\vdash_{\mathbf{CPF}^{\invertediota A}} A$, then $\Gamma\vdash_{\mathbf{CPF}^{\invertediota 'S}} A$. (b)  If $\Gamma\vdash_{\mathbf{IPF}^{\invertediota A}} A$, then $\Gamma\vdash_{\mathbf{IPF}^{\invertediota 'S}} A$. 
\end{theorem}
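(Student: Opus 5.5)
Both systems have the same elimination rules $(\invertediota E_1)$, $(\invertediota E_2)$, the same rule $(\exists i)$ and the same underlying positive free logic; they differ only in the introduction rule, $(\invertediota I')$ versus $(\invertediota I^S)$. (I read $\mathbf{CPF}^{\invertediota 'S}$ and $\mathbf{IPF}^{\invertediota 'S}$ as the systems $\mathbf{CPF}^{\invertediota S}$ and $\mathbf{IPF}^{\invertediota S}$ just defined.) So the plan is to show that every application of $(\invertediota I')$ can be simulated by a deduction using $(\invertediota I^S)$ together with rules common to both systems. Then, by induction on deductions, we replace each application of $(\invertediota I')$ in a deduction of $A$ from $\Gamma$ in the $A$-system by its simulation. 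The resulting deduction has the same conclusion and no new open assumptions. The simulation will be purely intuitionist, so (a) and (b) are proved at once.

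The simulation goes as follows. We are given $\Xi$ deriving $F_a^x$ from $a=t$ and $\exists !a$, and $\Pi$ deriving $a=t$ from $F_a^x$ and $\exists !a$, with $a$ satisfying the eigenvariable condition. The left premise of $(\invertediota I^S)$ has exactly the same form, so we reuse $\Xi$ unchanged, discharging the same assumption classes. For the right premise we need a deduction of $b=t$ from $a=t$, $\exists !a$, $F_b^x$ and $\exists !b$. We obtain it by renaming the parameter $a$ to a fresh $b$ in $\Pi$, giving $\Pi_b^a$, which derives $b=t$ from $F_b^x$ and $\exists !b$. The extra assumptions $a=t$ and $\exists !a$ are then discharged vacuously, which is permitted. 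Applying $(\invertediota I^S)$ yields $\invertediota xF=t$.

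The one step that needs care is the eigenvariable side conditions. We must check that $b$ does not occur free in $F$ or $t$, nor in any undischarged assumption of $\Pi_b^a$ other than $F_b^x$ and $\exists !b$. This holds because $a$ already met exactly these conditions in $(\invertediota I')$ and $b$ is chosen fresh. We must also check that $a$ meets the conditions in $\Xi$; it does, since $\Xi$ is unchanged. Renaming parameters is routine, in the sense of the substitution lemma underlying the quantifier reductions.

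Since the simulated inferences keep the same conclusions, the induction goes through for both the classical and the intuitionist system. Alternatively, for (a) one could argue semantically via the axioms, \rf{LA1}/\rf{LA2} versus \rf{LA2}/\rf{FDS'}, using the observation that \rf{FDS'} implies the right to left half of \rf{LA}. However, the direct rule simulation avoids needing to know that \rf{LA1} follows from \rf{FDS'} without an existence premise, which is the real content here. I expect no serious obstacle beyond this bookkeeping.
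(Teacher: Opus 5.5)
Your proof is correct and is essentially the paper's argument: an application of $(\invertediota I')$ becomes an instance of $(\invertediota I^S)$ in which the extra hypotheses above the right premise are discharged vacuously, and this works in both the classical and the intuitionist system. The only difference is that the paper takes $b$ to be the same parameter as $a$, whereas you rename $a$ to a fresh $b$ in $\Pi$. Your version is the more careful one: if $b=a$ were allowed, the right premise $a=t$ could be obtained trivially from the discharged hypothesis $a=t$, so the rule has to be read with $a$ and $b$ distinct.
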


\begin{proof}
$(\invertediota I')$ is the special case of $\invertediota I^S)$ where $a$ and $b$ are the same parameter and $a=t$ above the right premise discharged vacuously. Replacing $(\invertediota I')$ by $(\invertediota I^S)$ therefore results in a stronger system, as $(\invertediota I^S)$ permits the discharge of more assumptions, and so consequences are derivable with it that are not derivable without it. 
\end{proof}

\noindent Now for what we are for, the normalisation theorem. 

\begin{theorem}
Deductions in $\mathbf{IPF}^{\invertediota S}$ and $\mathbf{CPF}^{\invertediota S}$ can be brought into normal form. 
\end{theorem}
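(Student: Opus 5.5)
The plan is to follow the proofs for $\mathbf{IPF}^{\invertediota A}$ and $\mathbf{CPF}^{\invertediota A}$, and hence those for $\mathbf{IPF}{'}^{\invertediota}$ and $\mathbf{CPF}{'}^{\invertediota}$. We argue by induction over the rank $\langle d, l\rangle$ of Definition 13N. At each step we pick a maximal segment of highest degree such that no maximal segment of highest degree stands above it or above a minor premise of the rule of which its last formula is major premise. Applying the appropriate reduction then lowers the rank. The rules for the connectives, the quantifiers, $=$ and $(\exists i)$ are unchanged. So their reductions, and the permutative reductions moving applications of $(\lor E)$, $(\exists E)$ and $(\exists i)$ below the elimination rule, carry over verbatim. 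By Lemma \ref{altrestrictionquantifiers} and the remark after Lemma \ref{FDSderivable}, the $\invertediota$ rules and the quantifier rules may be assumed restricted to atomic instantiating terms and to the displayed occurrence of the $\invertediota$ term. Substitutions of terms for parameters therefore never raise degrees.

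The only new work is the two detour reductions for $(\invertediota I^S)$ followed by $(\invertediota E_1)$ or by $(\invertediota E_2)$.

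\emph{Case $(\invertediota E_1)$.} The minor premises are $\Sigma_2$ deducing $u=t$ and $\Sigma_3$ deducing $\exists !u$. Replace the detour by $\Xi_u^a$. Its assumptions $[u=t]$ and $[\exists !u]$ are replaced by $\Sigma_2$ and $\Sigma_3$, exactly as in reduction (1) for $(\invertediota I)$.

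\emph{Case $(\invertediota E_2)$.} The minor premises are $\Sigma_2$ deducing $F_u^x$, $\Sigma_3$ deducing $\exists !t$ and $\Sigma_4$ deducing $\exists !u$. Here $\Pi$ derives $b=t$ from $a=t$, $\exists !a$, $F_b^x$ and $\exists !b$. We substitute $u$ for $b$, plugging $\Sigma_2$ into $[F_u^x]$ and $\Sigma_4$ into $[\exists !u]$.

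The new feature is the assumptions $a=t$ and $\exists !a$. Unlike the case of $(\invertediota I)$, they need not be discharged vacuously in $\Pi$, so we must supply some term for $a$. The first idea is to substitute $u$ for $a$ as well. This still requires a deduction of $u=t$, which is the very conclusion we want, so it fails.

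Instead, substitute $t$ for $a$. The assumption $a=t$ becomes $t=t$, obtained by $(=I)$. The assumption $\exists !a$ becomes $\exists !t$, obtained from $\Sigma_3$. This is where the otherwise idle minor premise $\exists !t$ of $(\invertediota E_2)$ finally earns its keep.

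By the restriction on the $\invertediota$ rules, $t$ is atomic, so $\Pi_{t,u}^{a,b}$ contains no formulas of higher degree than before. The resulting deduction is $\Pi_{t,u}^{a,b}$, concluding $u=t$, with $(=I)$, $\Sigma_3$, $\Sigma_2$ and $\Sigma_4$ on top. Any new maximal segments created lie in these subdeductions or are of lower degree than $\invertediota xF=t$. The new $t=t$ is an introduction conclusion, which could be the major premise of $(=E)$, but such applications are vacuous and simply deleted. So the usual rank argument goes through. I expect this substitution for $a$ to be the main point requiring care.

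For $\mathbf{CPF}^{\invertediota S}$ we use Lemma 10N and Andou's method as in Theorem \ref{normalCPFDD}. The $(\bot E_C)$ reductions for $(\invertediota E_1)$ and $(\invertediota E_2)$ are unchanged, since they do not depend on the form of the introduction rule. Combining these with the detour reductions above yields normal form for both systems.
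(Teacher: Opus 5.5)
Your proposal is correct and follows the paper's proof. The $(\invertediota E_1)$ detour is reduced to $\Xi_u^a$ exactly as for $\mathbf{CPF}^{\invertediota A}$, and for $(\invertediota E_2)$ the paper likewise forms $\Pi{_t^a}{_u^b}$, closing $t=t$ by $(=I)$ and $\exists !t$ by the otherwise idle minor premise, with the remaining rank-induction, permutative and Andou-style steps handled by appeal to the earlier proofs, just as you do.
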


\begin{proof}
The new rule being an introduction, it suffices to consider maximal formulas that are concluded by $(\invertediota I^S)$ and eliminated by $(\invertediota E_1)$ and $(\invertediota E_2)$. The former case is as for $\mathbf{CPF}^{\invertediota A}$. The latter case goes as follows: 

\begin{center}
{\scriptsize{
\AxiomC{$[a=t]^i \ [\exists ! a]^k$}
\noLine
\UnaryInfC{$\Xi$}
\noLine
\UnaryInfC{$F_t^x$}
\AxiomC{$[a=t]^j \ [\exists !a]^k \ [F_b^x]^l \ [\exists !b]^m$}
\noLine
\UnaryInfC{$\Pi$}
\noLine
\UnaryInfC{$b=t$}
\LeftLabel{$_{(\invertediota I^S)}$}
\RightLabel{$_{i, j, k, l, m}$}
\BinaryInfC{$\invertediota xF=t$}
\AxiomC{$\Sigma_1$}
\noLine
\UnaryInfC{$F_u^x$}
\AxiomC{$\Sigma_2$}
\noLine
\UnaryInfC{$\exists !t$}
\AxiomC{$\Sigma_3$}
\noLine
\UnaryInfC{$\exists !u$}
\LeftLabel{$_{(\invertediota E_2)}$}
\QuaternaryInfC{$u=t$}
\DisplayProof\quad $\leadsto$ 
\AxiomC{$\overline{ \ t=t \ } \ 
\mathbin{\stackon[6pt]{[\exists !t]}{\Sigma_2}} \ 
\mathbin{\stackon[6pt]{[F_u^x]}{\Sigma_1}} \ 
\mathbin{\stackon[6pt]{[\exists !u]}{\Sigma_3}}$}
\noLine
\UnaryInfC{$\Pi{_t^a}{_u^b}$}
\noLine
\UnaryInfC{$u=t$}
\DisplayProof
}}
\end{center}

\noindent The proof now proceeds \emph{mutatis mutandis} as in the previous cases. 
\end{proof}

\noindent $\mathbf{IPF}^{\invertediota S}$ and $\mathbf{CPF}^{\invertediota S}$ raise questions concerning harmony. They have the same elimination rules for $\invertediota$ as do $\mathbf{IPF}{'}^{\invertediota}$ and $\mathbf{CPF}{'}^{\invertediota}$, but a different introduction rule. There is a thought in proof-theoretic semantics that one and the same collection of elimination rules should not be harmonious with different, non-equivalent introduction rules (and analogously for introduction rules). 

The reason the normalisation procedure works for $\mathbf{IPF}^{\invertediota S}$ and $\mathbf{CPF}^{\invertediota S}$ is that $\vdash t=t$. Even if $t=t$ required $\exists !t$, as in negative free logic, the procedure would work, as the latter is given as a premise. Once more, questions concerning the relation between $\invertediota$ and $=$ arise, and the dependence of the meaning of $\invertediota$ on that of $=$. However, it strikes me as if an appeal to rules for $=$ in the normalisation procedure can hardly be objectionable, as the rules for $\invertediota$ appeal to $=$. That the meaning of the former can only be defined in terms of the latter supplies no valid objection to defining its meaning in terms of rules of inference.

\subsection{Hintikka's System}
Hintikka and Lambert proposed similar theories of definite descriptions around the same time. To honour them both with an axiom, I shall name the following \emph{Hintikka's Axiom}:\footnote{This slightly, but crucially, different from the axiom Hintikka proposed in \citep{hintikkatowardsdefdesc}. Lambert criticised that proposal on the grounds that it leads to contradiction \citep{lambertnotesEIII}. The axiom Hintikka appealed to there is in our notation: $(\ast )$ \ $\invertediota xA=t\leftrightarrow (A_t^x\land \forall y(A_y^x\rightarrow y=t))$. In free logic, $A_t^x\land \forall y(A_y^x\rightarrow y=t)$ is not equivalent to $\forall y(A_y^x\leftrightarrow y=t)$, so \rf{HA} is safe. Lambert's proof that $(\ast)$ leads to contradiction appeals to the law of identity, $(=I)$. Hintikka responded by rejecting $t=t$ for definite descriptions on the grounds that if there is more than one $F$, different occurrences of $\invertediota xF$ may well refer to different objects \citep{hintikkaDDandidentity}. Lambert in turn responded to Hintikka with the proposal to formalise a theory of definite descriptions with the single axiom \rf{FD2} \citep{lambertdefdescrandid}, which he had already proposed in \citep[87]{lambertnotesEIV}.}

\lbp{HA}{$HA$}{$\invertediota xA=t\leftrightarrow \forall y(A_y^x\leftrightarrow y=t)$}

\noindent As $t$ is any term, \rf{HA} requires fewer existence assumptions than \rf{LA} in its use in deductions, because $t$ need not exist, and so to formalise Hintikka's theory of definite descriptions in natural deduction, a correspondingly weakened elimination rule is required: 

\begin{prooftree}
\AxiomC{$\invertediota xF=t$}
\AxiomC{$F_u^x$}
\AxiomC{$\exists !u$}
\LeftLabel{$(\invertediota E_2') \ $}
\TrinaryInfC{$u=t$}
\end{prooftree}

\begin{lemma}
$(\invertediota I')$, $(\invertediota E_1)$, $(\invertediota E_2')$ and \rf{HA} are interderivable. 
\end{lemma}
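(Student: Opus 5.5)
The plan mirrors the proof of Lemma \ref{LAandrules}, using the biconditional rules $(\leftrightarrow I)$, $(\leftrightarrow E_1)$, $(\leftrightarrow E_2)$ introduced there. The interderivability has two directions. In one, the three rules yield \rf{HA} in $\mathbf{IPF}$. In the other, \rf{HA} yields each of the three rules. Note that \rf{HA} is schematic in the term $t$ and carries no outer quantifier. So $(\forall E)$ with an existence premise for $t$ is never needed, and this is what lets the existence premise drop out of both $(\invertediota I')$ and $(\invertediota E_2')$.

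\emph{From the rules to} \rf{HA}. For left to right, assume $\invertediota xA=t$ and fresh $[\exists !a]^3$. Apply $(\invertediota E_1)$ to $\invertediota xA=t$, $[a=t]^1$, $[\exists!a]^3$ to get $A_a^x$. Apply $(\invertediota E_2')$ to $\invertediota xA=t$, $[A_a^x]^2$, $[\exists!a]^3$ to get $a=t$. Then $(\leftrightarrow I)$ discharging $1,2$ gives $A_a^x\leftrightarrow a=t$, and $(\forall I)$ discharging $3$ gives $\forall y(A_y^x\leftrightarrow y=t)$. This is exactly the deduction in Lemma \ref{LAandrules}(a), except that the premise $\exists!t$ is now absent because $(\invertediota E_2')$ does not need it. For right to left, assume $\forall y(A_y^x\leftrightarrow y=t)$. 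Instantiate it by $(\forall E)$ with $[\exists !a]^k$ and apply $(\leftrightarrow E_1)$ to $[a=t]^i$ to get $A_a^x$. Likewise $(\leftrightarrow E_2)$ applied to $[A_a^x]^j$ gives $a=t$. Then $(\invertediota I')$ yields $\invertediota xA=t$ with no existence premise on $t$. Finally $(\rightarrow I)$ and $(\leftrightarrow I)$ give \rf{HA}.

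\emph{From} \rf{HA} \emph{to the rules}. For $(\invertediota I')$, take the given subdeductions $\Xi$ of $F_a^x$ from $a=t,\exists!a$ and $\Pi$ of $a=t$ from $F_a^x,\exists!a$. Combine them by $(\leftrightarrow I)$, then apply $(\forall I)$, discharging $\exists!a$, to get $\forall y(F_y^x\leftrightarrow y=t)$. Then $(\leftrightarrow E_1)$ with \rf{HA} gives $\invertediota xF=t$. The side condition on $a$ in $(\invertediota I')$ is exactly what $(\forall I)$ requires. For $(\invertediota E_1)$, from $\invertediota xF=t$ and \rf{HA} obtain $\forall y(F_y^x\leftrightarrow y=t)$, instantiate at $u$ using $\exists!u$, and apply $(\leftrightarrow E_1)$ to $u=t$. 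For $(\invertediota E_2')$, do the same but apply $(\leftrightarrow E_2)$ to $F_u^x$ to get $u=t$. Again no $\exists!t$ is needed.

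There is no real obstacle here. The only step needing care is to check the direction of identities. \rf{HA} as displayed has $y=t$, and the rules have $a=t$ and $u=t$, so these match without any appeal to symmetry. The \rf{LA1} variant in Section 3 has $t=y$, and there one would insert symmetry of identity, derived from $(=I)$ and $(=E)$. The other point to verify is the eigenvariable condition on $a$ when $(\forall I)$ discharges $\exists!a$.
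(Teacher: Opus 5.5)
Your proof is correct and matches the approach the paper intends: its own proof is left as an exercise, and your argument is the proof of Lemma~\ref{LAandrules} redone without the outer quantifier. Because \rf{HA} has no outer $\forall$, no $(\forall E)$ step with premise $\exists !t$ is needed, so the existence premise drops out of both $(\invertediota I')$ and $(\invertediota E_2')$; the only slip is that your final $(\rightarrow I)$ is superfluous, since $(\leftrightarrow I)$ discharges $\invertediota xA=t$ and $\forall y(A_y^x\leftrightarrow y=t)$ directly.
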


\begin{proof}Exercise.\end{proof}

\noindent $t$ and $u$ can be restricted to atomic terms, for the same reasons as in the case of $\mathbf{IPF}^{\invertediota A}$ and $\mathbf{CPF}^{\invertediota A}$. Call the systems that result by replacing $(\invertediota E_2)$ by $(\invertediota E_2')$ in them $\mathbf{IPF}^{\invertediota H}$ and $\mathbf{CPF}^{\invertediota H}$. 

\begin{theorem}
Deductions in $\mathbf{IPF}^{\invertediota H}$ and $\mathbf{CPF}^{\invertediota H}$ can be brought into normal form. 
\end{theorem}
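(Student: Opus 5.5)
The plan is to follow the proofs for $\mathbf{IPF}^{\invertediota A}$ and $\mathbf{CPF}^{\invertediota A}$ \emph{mutatis mutandis}. The only rule that has changed is the elimination rule $(\invertediota E_2')$, which lacks the premise $\exists !t$.

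\textbf{Preliminaries.} First check that the preliminary restrictions still hold.

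\begin{itemize}[label={}]
\item Lemma \ref{altrestrictionquantifiers} concerns only the quantifiers and $(\exists i)$, so it carries over unchanged.
\item For Lemma \ref{altiotarestricted}, the cases for $(\invertediota I')$ and $(\invertediota E_1)$ are as before. For $(\invertediota E_2')$, modify deduction (3) of Lemma \ref{iotarestricted}. Replace $\invertediota yG$ by a fresh parameter $a$ and derive $\invertediota xF=a$ from $\invertediota xF=\invertediota yG$ and $a=\invertediota yG$ by $(=E)$ and symmetry. Apply $(\invertediota E_2')$ to get $u=a$, obtain $u=\invertediota yG$ by $(=E)$, and discharge $a=\invertediota yG$ by $(\exists i)$.
\item This case is actually simpler than before, since no $\exists !a$ has to be derived from the missing premise $\exists !\invertediota yG$.
\item Complex $u$ is handled as in Lemma \ref{iotarestricted}, with $(\exists i)$ in place of $(=I^{nG})$.
\item The restriction of $(=E)$ and $(\bot E)$ to atomic and prime conclusions is unaffected.
\end{itemize}

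\textbf{Reduction steps.} Next supply the detour conversions for a maximal formula $\invertediota xF=t$.

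\begin{itemize}[label={}]
\item $(\invertediota I')$ followed by $(\invertediota E_1)$: reduce as in case (1), with the idle premise $\exists !t$ simply absent.
\item $(\invertediota I')$ followed by $(\invertediota E_2')$: substitute the deduction of $F_u^x$ for the assumption class $[F_a^x]$ and that of $\exists !u$ for $[\exists !a]$ in $\Pi_u^a$, yielding $u=t$. The reduction procedure (2) never used $\exists !t$, so its absence changes nothing.
\item Because the $\invertediota$ rules are restricted to atomic $u$, the substitution $\Pi_u^a$ does not raise degrees. This is the point at which Lemma \ref{altiotarestricted} is indispensable.
\item For the classical system, add the Andou-style conversion of $(\bot E_C)$ followed by $(\invertediota E_2')$. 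This copies reduction (2) of the classical case, minus the premise $\Sigma_2$.
\item The permutative conversions for $(\lor E)$, $(\exists E)$ and $(\exists i)$ stay as they were.
\end{itemize}

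\textbf{Induction.} Finally, run the standard induction on the rank $\langle d,l\rangle$ of Definition 13N. Choose a maximal segment of highest degree such that no maximal segment of highest degree stands above it or above the minor premises of its final elimination. Show that applying the corresponding reduction lowers the rank. For the classical system, use Lemma 10N to first make the deduction regular.

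The main obstacle is confirming that the detour reductions do not create new maximal segments of equal or higher degree. The concern is segments arising where substituted deductions end in introductions or in $(\exists i)$-segments. Since all substituted formulas ($u=t$, $\exists !u$, $F_u^x$) have lower degree than $\invertediota xF=t$, or are atomic by the restrictions, I expect this to go through exactly as in Theorem \ref{normalIPFDD}.
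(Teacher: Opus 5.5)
Your proposal is correct and takes the same route as the paper. The paper also proceeds mutatis mutandis from the $\mathbf{IPF}^{\invertediota A}$/$\mathbf{CPF}^{\invertediota A}$ case, notes that $t$ and $u$ can be restricted to atomic terms as before, and gives the same new reduction for $(\invertediota I')$ followed by $(\invertediota E_2')$: substitute the deductions of $F_u^x$ and $\exists !u$ into $\Pi_u^a$. Your explicit checks of the restriction lemma and of the classical Andou-style conversion simply spell out what the paper leaves under ``mutatis mutandis''.
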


\begin{proof}
The reduction procedure for maximal formulas arising from $(\invertediota I')$ and $(\invertediota E_2')$ is evidently:\bigskip

\AxiomC{$[a=t]^i \ [\exists ! a]^k$}
\noLine
\UnaryInfC{$\Xi$}
\noLine
\UnaryInfC{$F_a^x$}
\AxiomC{$[F_a^x]^j \ [\exists !a]^k$}
\noLine
\UnaryInfC{$\Pi$}
\noLine
\UnaryInfC{$a=t$}
\LeftLabel{$_{(\invertediota I')}$}
\RightLabel{$_{i, j, k}$}
\BinaryInfC{$\invertediota xF=t$}
\AxiomC{$\Sigma_1$}
\noLine
\UnaryInfC{$F_u^x$}
\AxiomC{$\Sigma_2$}
\noLine
\UnaryInfC{$\exists !u$}
\LeftLabel{$_{(\invertediota E_2')}$}
\TrinaryInfC{$u=t$}
\DisplayProof\quad $\leadsto$ 
\AxiomC{$\mathbin{\stackon[6pt]{[F_u^x]}{\Sigma_1}} \ 
\mathbin{\stackon[6pt]{[\exists !u]}{\Sigma_2}}$}
\noLine
\UnaryInfC{$\Pi{_u^a}$}
\noLine
\UnaryInfC{$u=t$}
\DisplayProof

\bigskip

\noindent The proof now proceeds \emph{mutatis mutandis} as in the previous cases. 
\end{proof}

\noindent We are now faced with a rather interesting situation. Only two formulas are discharged  above the right premise of $(\invertediota I')$, and these correspond precisely to the two minor premises of $(\invertediota E_2')$. An assumption corresponding to the third premise of $(\invertediota E_2)$, however, could be discharged above the right premise of $(\invertediota I')$, as $\exists !t$ is derivable from $\exists !a$ and $a=t$. The theories of definite descriptions of $\mathbf{IPF}^{\invertediota A}$ and $\mathbf{CPF}^{\invertediota A}$ and of $\mathbf{IPF}^{\invertediota H}$ and $\mathbf{CPF}^{\invertediota H}$ are not the same. The instance of \rf{HA} for a term $t$ is not derivable from \rf{LA} unless $t$ exists, which is the additional assumption in $(\invertediota E_2)$. This points to a disharmony somewhere. The question is where. Which set of rules is harmonious, those of $\mathbf{IPF}^{\invertediota A}$ and $\mathbf{CPF}^{\invertediota A}$ or those of $\mathbf{IPF}^{\invertediota H}$ and $\mathbf{CPF}^{\invertediota H}$? 

One may be inclined to solve this conundrum by declaring that so much the worse for both: neither set of rules satisfies the criteria of proof-theoretic semantics. That option, however, isn't really on the table. There is the thought that introduction rules can be laid down as we please: they are self-justifying, as Dummett says \citep[251]{dummettLBM}. Harmony determines the elimination rules. Others prefer to see matters the other way round and treat elimination rules as self-justifying, and introduction rules determined by harmony from them. 

I shan't attempt to address this question here, and merely note it as a further philosophical issue that arises for a proof-theoretic semantics for the $\invertediota$ operator.

\subsection{Relations between the Systems}
It remains to say something about the relations between the systems. 

\begin{lemma}\label{FDAsubFDS}
(a) \rf{FDS} $\vdash$ \rf{LA1}. (b) \rf{FDS} $\vdash$ \rf{LA2}.
\end{lemma}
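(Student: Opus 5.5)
Both parts are shown in positive free logic, working from \rf{FDS}. By Lemma \ref{halfofFDS} and the comment on \rf{FDS'}, \rf{FDS} is interderivable with \rf{LA2} plus \rf{FDS'}. The only real work is (a).

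For (b) I would use only the left-to-right half of \rf{FDS}. Take a fresh parameter $b$ and assume $\invertediota xA=b$ and $\exists !a$. Instantiating \rf{FDS} at $b$ and applying $(\leftrightarrow E)$ gives $\forall x(x=b\leftrightarrow (A\land\forall y(A_y^x\rightarrow y=b)))$. To obtain $\forall y(A_y^x\leftrightarrow b=y)$, discharge $\exists !a$ by $(\forall I)$ and prove $A_a^x\leftrightarrow b=a$ in two halves.

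\begin{enumerate}[label=(\roman*)]
\item From $b=a$: symmetry gives $a=b$. Instantiating the universal at $a$ with $\exists !a$ gives $A_a^x$.
\item From $A_a^x$: we need $a=b$, and hence $b=a$. This needs a witness in the inner domain for $b$, supplied by the hypothesis $\exists !b$ of the universally quantified \rf{LA2}.
\end{enumerate}

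With $\exists !b$ available, instantiate the universal at $b$. The instance $b=b$ holds by $(=I)$, so we get $A_b^x\land\forall y(A_y^x\rightarrow y=b)$. Then $(\forall E)$ at $a$, using $\exists !a$ and $A_a^x$, gives $a=b$. Discharging $\invertediota xA=b$ by $(\rightarrow I)$ and $\exists !b$ by $(\forall I)$ yields \rf{LA2}. This half is routine.

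For (a), \rf{LA1} is the right-to-left half of \rf{HA}. Assume $\forall y(A_y^x\leftrightarrow t=y)$; the goal is $\invertediota xA=t$. By \rf{FDS'} it suffices to prove $\forall x(x=t\rightarrow(A\land\forall y(A_y^x\rightarrow y=t)))$. So assume $\exists !a$ and $a=t$.

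\begin{enumerate}[label=(\roman*)]
\item From $a=t$ and $\exists !a$, $(=E)$ gives $\exists !t$.
\item Instantiating the hypothesis at $t$ gives $A_t^x\leftrightarrow t=t$, so $A_t^x$ by $(=I)$, and then $A_a^x$ by $(=E)$ with symmetry.
\item For $\forall y(A_y^x\rightarrow y=t)$: from $\exists !c$ and $A_c^x$, instantiating at $c$ gives $t=c$, hence $c=t$.
\end{enumerate}

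Then $(\forall I)$ and $(\rightarrow I)$ close the argument.

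The point I expect to be the main obstacle is that no existence assumption for $t$ is used outright. The one needed, $\exists !t$, is derived in step (i) from $\exists !a$ and $a=t$, which are available only under the universal discharged by \rf{FDS'}. This is exactly why \rf{LA1} holds for arbitrary $t$. I would check carefully that every discharge respects the eigenvariable conditions. If the bookkeeping under \rf{FDS'} proved awkward, the fallback is to use the full right-to-left half of \rf{FDS}, proving also the converse conditional via $(\forall E)$ on $a$.
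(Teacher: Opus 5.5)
Your proof is correct and follows essentially the paper's route. For (b) you instantiate the right-hand side of ($FDS$) at the existing $b$ and use $b=b$. For (a) you derive the right-hand side of ($FDS$) from $\forall y(A_y^x\leftrightarrow t=y)$, via ($FDS'$), which the paper notes is equivalent to the right-to-left half. The one difference is a harmless detour: deriving $\exists !t$ is not needed, since instantiating the hypothesis at $a$ gives $A_a^x\leftrightarrow t=a$ directly. That is how the paper does it, so the existence of $t$ is not an obstacle at all.
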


\begin{proof}
(a) Standard reasoning in $\mathbf{CPF}$ gives $\forall y(A\leftrightarrow y=t), \exists !a, a=t\vdash A_a^x$ and $\forall y(A\leftrightarrow y=t)\vdash \forall y(A\rightarrow y=t)$, hence $\forall y(A\leftrightarrow y=t), \exists !a, a=t\vdash A_a^x\land\forall y(A\rightarrow y=t)$. Furthermore, $A_a^x\land\forall y(A\rightarrow y=t), \exists !a\vdash a=t$, hence $\forall y(A\leftrightarrow y=t)\vdash \forall x(x=t\leftrightarrow (A\land\forall y(A\leftrightarrow y=t)))$. So \rf{FDS}, $\forall y(A\leftrightarrow y=t)\vdash\invertediota xA=t$, and the result follows. 

(b) \rf{FDS}, $\invertediota xA=a\vdash \forall x(x=a\leftrightarrow (A\land\forall y(A\leftrightarrow y=a)))$, hence \rf{FDS}, $\invertediota xA=a, \exists !a\vdash a=a\leftrightarrow (A_a^x\land\forall y(A\leftrightarrow y=a)))$, so \rf{FDS}, $\invertediota xA=a, \exists !a\vdash A_a^x\land\forall y(A\leftrightarrow y=a)$. The result follows by familiar moves in $\mathbf{CPF}$. 
\end{proof}

\noindent The following relationships of inclusion have been established: 

\begin{theorem}\label{relations}
(a) $\mathbf{CPF}{'}^{\invertediota}\subset\mathbf{CPF}{'}^{\invertediota 1}\subseteq\mathbf{CPF}^{\invertediota A}\subseteq\mathbf{CPF}^{\invertediota S}\subseteq\mathbf{FD2}$

(b) $\mathbf{CPF}{'}^{\invertediota}\subset\mathbf{CPF}{'}^{\invertediota 1}\subseteq\mathbf{CPF}^{\invertediota A}\subseteq\mathbf{CPF}^{\invertediota H}$
\end{theorem}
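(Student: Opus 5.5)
The plan is to read each inclusion $\mathbf{S}\subseteq\mathbf{S}'$ as: every $\Gamma\vdash_{\mathbf{S}}A$ gives $\Gamma\vdash_{\mathbf{S}'}A$. Since all systems share the rules of positive free logic, each inclusion reduces to deriving the distinctive $\invertediota$ rules or axioms of the smaller system in the larger one. By Theorem \ref{equivalencerestrictions} we may pass freely between $\mathbf{CPF}{'}^{\invertediota}$ and $\mathbf{CPF}^{\invertediota}$, that is, Lambert's $\mathbf{MFD}$ axiomatised by \rf{LA}. Similarly, $\mathbf{CPF}^{\invertediota A}$ corresponds to \rf{LA1}$+$\rf{LA2}, $\mathbf{CPF}^{\invertediota S}$ to $\mathbf{FDS}$, and $\mathbf{CPF}^{\invertediota H}$ to \rf{HA}, by the interderivability lemmas already stated. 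Since $(\exists i)$ is admissible by Theorem \ref{existsi}(a), it adds no consequences and can be ignored when comparing theorem sets.

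\emph{Strictness of the first inclusion.} $\mathbf{CPF}{'}^{\invertediota 1}$ adds \rf{C}, so inclusion is immediate. For strictness I would use Lambert's remark that \rf{C} is not derivable from \rf{LA}. To confirm it, I would give a dual domain countermodel: let $t$ denote an outer-domain object $o\notin\mathfrak{E}$, and interpret $\invertediota x(x=t)$ as some other outer object. Then \rf{LA} holds, because its universal quantifier ranges only over $\mathfrak{E}$ and no inner object satisfies $x=t$, while $\invertediota x(x=t)=t$ fails.

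\emph{Second inclusion, $\mathbf{CPF}{'}^{\invertediota 1}\subseteq\mathbf{CPF}^{\invertediota A}$.} Section 3 already notes that \rf{LA1} with $A$ taken as $x=t$ yields \rf{C}. Also \rf{LA1}$+$\rf{LA2} gives \rf{LA}, since \rf{LA} is derivable from $(\invertediota I'),(\invertediota E_1),(\invertediota E_2)$. So \rf{LA} and \rf{C} hold in $\mathbf{CPF}^{\invertediota A}$.

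\emph{Third inclusion, $\mathbf{CPF}^{\invertediota A}\subseteq\mathbf{CPF}^{\invertediota S}$.} This is the preceding theorem, which says $(\invertediota I')$ is a special case of $(\invertediota I^S)$. Alternatively it follows from Lemma \ref{FDAsubFDS}.

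\emph{Last inclusion in (a), $\mathbf{CPF}^{\invertediota S}\subseteq\mathbf{FD2}$.} This follows from Theorem \ref{FDSsubFD2} together with the equivalence $\mathbf{CPF}^{\invertediota S}=\mathbf{FDS}$.

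\emph{Part (b).} It remains to show $\mathbf{CPF}^{\invertediota A}\subseteq\mathbf{CPF}^{\invertediota H}$. The two systems share $(\invertediota I')$ and $(\invertediota E_1)$, so it suffices to derive $(\invertediota E_2)$ from $(\invertediota E_2')$. This is trivial: $(\invertediota E_2)$ has one extra premise $\exists!t$, which can be discarded, so every application of $(\invertediota E_2)$ is an application of $(\invertediota E_2')$ with a premise dropped.

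\emph{Main obstacle.} The only step that is not bookkeeping is the strictness claim $\mathbf{CPF}{'}^{\invertediota}\neq\mathbf{CPF}{'}^{\invertediota 1}$. One must check that the dual domain semantics with arbitrary outer-domain values for $\invertediota$ terms validates all rules of $\mathbf{CPF}^{\invertediota}$, including $(=E)$ with $\invertediota$ terms; this means values must be assigned compositionally so that substitutivity holds. I would rely on the standard soundness of \rf{LA} for this semantics.
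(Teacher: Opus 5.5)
Your proof is correct. For (a) it follows the paper's chain: Cancellation is provable with $(\invertediota I')$; $(\invertediota I')$ is a special case of $(\invertediota I^S)$ (equivalently, Lemma~\ref{FDAsubFDS}); and $\mathbf{FDS}\subseteq\mathbf{FD2}$, a step the paper's proof leaves implicit. You depart from the paper in three places.

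First, the paper only attributes the strictness of $\mathbf{CPF}{'}^{\invertediota}\subset\mathbf{CPF}{'}^{\invertediota 1}$ to Lambert, whereas you build a countermodel. In it, the value of $\invertediota x(x=t)$ must lie in $\mathfrak{D}\setminus\mathfrak{E}$, not merely differ from $o$; otherwise $(LA)$ fails at that inner $z$. Your ``main obstacle'' disappears with the following choice: $\mathfrak{E}=\emptyset$, $\mathfrak{D}=\{o,p\}$, a constant $c$ denoting $o$, and every $\invertediota$ term denoting $p$. Then:
\begin{itemize}
\item $(\invertediota I)$, $(\invertediota E_1)$, $(\invertediota E_2)$ and $(=I^{nG})$ each have an existence premise, which is never true in this model, so they preserve truth vacuously;
\item $(=E)$ is sound because all $\invertediota$ terms receive one constant value;
\item yet $\invertediota x(x=c)=c$ is false.
\end{itemize}
The same model also falsifies $(\invertediota I')$, which shows exactly where dropping the existence premise makes Cancellation provable.

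Second, for (b) the paper's stated reason, that $\mathbf{CPF}^{\invertediota H}$ proves Cancellation, only yields $\mathbf{CPF}{'}^{\invertediota 1}\subseteq\mathbf{CPF}^{\invertediota H}$. What actually establishes $\mathbf{CPF}^{\invertediota A}\subseteq\mathbf{CPF}^{\invertediota H}$ is your observation that $(\invertediota E_2)$ is derivable from $(\invertediota E_2')$ by ignoring the premise $\exists !t$.

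Third, you make explicit that $(\exists i)$ is admissible. It is built into the A, S and H systems but absent from the axiomatic $\mathbf{FDS}$ and $\mathbf{FD2}$. The substitution argument of Theorem~\ref{existsi}(a) carries over to any system closed under substituting terms for parameters. The paper's identification of $\mathbf{CPF}^{\invertediota S}$ with $\mathbf{FDS}$, and hence the final inclusion in (a), tacitly relies on this.
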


\begin{proof}
(a) $\mathbf{CPF}{'}^{\invertediota}\subset\mathbf{CPF}{'}^{\invertediota 1}$ is a result of Lambert's. $\mathbf{CPF}{'}^{\invertediota 1}\subseteq\mathbf{CPF}^{\invertediota A}$ because $\mathbf{CPF}^{\invertediota A}\vdash \invertediota x(x=t)=t$. $\mathbf{CPF}^{\invertediota A}\subseteq\mathbf{CPF}^{\invertediota S}$ is shown in Lemma \ref{FDAsubFDS}.

(b) Because $\mathbf{CPF}^{\invertediota H}\vdash \invertediota x(x=t)=t$. 
\end{proof}

\begin{corollary}
The analogous relationships hold for the intuitionist systems.
\end{corollary}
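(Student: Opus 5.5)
The plan is to transfer the proof of Theorem \ref{relations} to the intuitionist systems step by step. Each inclusion established there rests on one of two things: a derivation of one axiom or rule from another inside positive free logic, or a semantic non-derivability argument. I would check that every derivation is intuitionistically acceptable, meaning it uses no application of $(\bot E_C)$ other than the vacuous ones that count as $(\bot E)$. The intuitionist analogue of $\mathbf{FD2}$ is $\mathbf{IPF}$ plus \rf{FD2}. The intuitionist analogue of $\mathbf{FDS}$ is $\mathbf{IPF}$ plus \rf{LA2} and \rf{FDS'}, which by Lemma \ref{FDSderivable} coincides with $\mathbf{IPF}^{\invertediota S}$. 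The other systems are already defined intuitionistically.

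For the non-strict inclusions, I would go through the steps in order.

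\begin{enumerate}
\item $\mathbf{IPF}{'}^{\invertediota 1}\subseteq\mathbf{IPF}^{\invertediota A}$ (and likewise $\subseteq\mathbf{IPF}^{\invertediota H}$). I exhibit a derivation of \rf{C} by $(\invertediota I')$. In the left subdeduction, from $[a=t]$ we have $F_a^x$ directly, since $F$ is $x=t$. In the right subdeduction, from $[a=t]$ we have $a=t$ directly. This is purely intuitionistic.
\item $\mathbf{IPF}^{\invertediota A}\subseteq\mathbf{IPF}^{\invertediota S}$. I use the observation in the text that $(\invertediota I')$ is the special case of $(\invertediota I^S)$ with $b$ identified with $a$ and a vacuous discharge. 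Alternatively I rerun Lemma \ref{FDAsubFDS}; its steps (the $\forall$, $\leftrightarrow$ and $=$ manipulations) use only intuitionist rules.
\item $\mathbf{IPF}^{\invertediota S}\subseteq$ intuitionist $\mathbf{FD2}$. I verify that Lemmas \ref{rhsFD2FDS} and \ref{halfofFDS}, hence Lemma \ref{FD2toFDS}, go through intuitionistically. Inspecting them, they use only $(\forall E)$, $(\forall I)$, $(\leftrightarrow I/E)$, $(\land I/E)$ and $(=E)$.
\end{enumerate}

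Those lemmas are stated with $\vdash$ in \textbf{CPF}, so the work is simply confirming that no classical reductio is invoked. I would also confirm that the restriction lemmas and the rule $(\exists i)$ behave the same way in the intuitionist setting; nothing in Lemmas \ref{altrestrictionquantifiers} and \ref{altiotarestricted} is classical.

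The main obstacle is the strict inclusion $\mathbf{IPF}{'}^{\invertediota}\subset\mathbf{IPF}{'}^{\invertediota 1}$. In the classical case this was taken from Lambert and rests on a classical countermodel showing that \rf{C} is not derivable from \rf{LA}. I would argue by conservativity. If \rf{C} were derivable in $\mathbf{IPF}{'}^{\invertediota}$, then, since every intuitionist deduction is a classical one, it would be derivable in $\mathbf{CPF}{'}^{\invertediota}$, contradicting Lambert's result. Non-derivability thus transfers downward for free, and the strictness carries over. No intuitionist (Kripke-style) model is needed, because the intuitionist systems are subsystems of the classical ones.
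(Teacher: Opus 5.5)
Step 3 does not go through, and no argument can rescue it. Lemmas \ref{rhsFD2FDS} and \ref{halfofFDS} are indeed intuitionistic, but they give you only the left-to-right half of $(FDS)$ from $(FD2)$. For $\mathbf{IPF}^{\invertediota S}\subseteq\mathbf{IPF}+(FD2)$ you need the right-to-left half, i.e.\ $(FDS')$, equivalently $(\invertediota I^S)$. That half is not derivable from $(FD2)$, even classically.

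The reason is that the right premise of $(\invertediota I^S)$ may use the discharged $a=t$, so the rule yields $\neg\exists !t\vdash\invertediota x(x=x)=t$. The left premise $a=a$ is given by $(=I)$. For the right premise, $(=E)$ gives $\exists !t$ from $a=t$ and $\exists !a$, which together with $\neg\exists !t$ gives $\bot$, and $(\bot E)$ then gives $b=t$.

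Now take the dual domain model with inner domain $\{o\}$, outer domain $\{o,\ast\}$, and $t$ denoting $\ast$. Let $\invertediota xA$ denote the unique existing $A$ if there is one, and $\ast$ otherwise. Checking the cases $t\mapsto o$ and $t\mapsto\ast$ shows that every instance of $(FD2)$ is true. Yet $\neg\exists !t$ is true and $\invertediota x(x=x)=t$ is false. So $\mathbf{IPF}^{\invertediota S}$ is not included in $\mathbf{CPF}+(FD2)$, let alone in $\mathbf{IPF}+(FD2)$ or any other intuitionist version of $\mathbf{FD2}$.

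The inclusion actually runs the other way. From $\forall x(x=t\leftrightarrow\forall y(A_y^x\leftrightarrow y=x))$, both premises of $(\invertediota I^S)$ follow using only $(\forall E)$, $(\leftrightarrow E)$, $(=I)$ and $(=E)$. The other half of $(FD2)$ follows from $(LA)$ by Lemma \ref{halfofLA2}, and $(LA)$ is available by Lemma \ref{FDSderivable}. Hence $\mathbf{IPF}+(FD2)\subseteq\mathbf{IPF}^{\invertediota S}$.

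The error is inherited from Lemma \ref{FD2toFDS}, whose cited proof covers only one direction, and through it from the last link of Theorem \ref{relations}(a). What can be established is chain (b), and chain (a) only up to $\mathbf{IPF}^{\invertediota S}$.

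The rest of your proposal is sound. Your downward transfer of strictness is exactly the paper's one-line proof (``each intuitionist system is included in the corresponding classical one''). That argument by itself handles only the strict link, so your re-checking of the $\subseteq$-links is a needed supplement.

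Two small points remain.
\begin{enumerate}
\item For chain (b) you checked $\mathbf{IPF}{'}^{\invertediota 1}\subseteq\mathbf{IPF}^{\invertediota H}$ rather than the stated $\mathbf{IPF}^{\invertediota A}\subseteq\mathbf{IPF}^{\invertediota H}$. The latter is immediate, since $(\invertediota E_2)$ is just $(\invertediota E_2')$ with an extra premise.
\item In step 2, ``$b$ identified with $a$'' has to mean renaming $a$ to a fresh $b$ in the right subdeduction of $(\invertediota I')$ and discharging $a=t$ and $\exists !a$ vacuously there. If $(\invertediota I^S)$ literally allowed $b$ to be $a$, the right subdeduction could consist of the discharged assumption $a=t$ alone. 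The rule would then prove $\invertediota x(x=x)=t$ for every $t$.
\end{enumerate}
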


\begin{proof}
Because each intuitionist system is included in the corresponding classical one.
\end{proof}

\noindent It is reasonable to conjecture the following: 

\begin{conjecture}
(a) All inclusions of Theorem \ref{relations} are proper. 

(b) $\mathbf{CPF}^{\invertediota H}$ and $\mathbf{CPF}^{\invertediota S}$ are incomparable. 

(c) $\mathbf{CPF}^{\invertediota H}$ and $\mathbf{FD2}$ are incomparable.
\end{conjecture}

\noindent The reason why (a) is reasonable is that $(\invertediota I^S)$ of $\mathbf{CPF}^{\invertediota S}$ permits the deduction of $\invertediota xFx=t$ from fewer undischarged assumptions than $(\invertediota I')$ of $\mathbf{CPF}^{\invertediota H}$, but $(\invertediota E_2')$ of $\mathbf{CPF}^{\invertediota H}$ permits the deduction of more formulas $u=t$ from $\invertediota xFx=t$ than $(\invertediota E_2)$ of $(\invertediota I^S)$. (b) is reasonable for exactly parallel reasons. Further investigations into the relationships of the systems presented here must await another occasion, but the following is worth recoding: 

\begin{lemma}\label{FDSsubHA}
The rhs of \rf{HA} implies the rhs of \rf{FDS}.
\end{lemma}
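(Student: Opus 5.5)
We have to show, in positive free logic, that $\forall y(A_y^x\leftrightarrow y=t)\vdash \forall x(x=t\leftrightarrow (A\land\forall y(A_y^x\rightarrow y=t)))$. The approach is the pattern of Lemma \ref{rhsFD2FDS} and of part (a) of Lemma \ref{FDAsubFDS}. We take a fresh parameter $a$ and assume $\exists !a$. We prove the biconditional $a=t\leftrightarrow (A_a^x\land\forall y(A_y^x\rightarrow y=t))$ from $\forall y(A_y^x\leftrightarrow y=t)$ and $\exists !a$ by $(\leftrightarrow I)$. We then discharge $\exists !a$ by $(\forall I)$. Since $a$ occurs neither in $t$, $A$, nor the premise, the proviso on $(\forall I)$ is met.

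\emph{Left to right.} Assume $a=t$.
\begin{itemize}
\item From the premise and $\exists !a$, $(\forall E)$ gives $A_a^x\leftrightarrow a=t$, and $(\leftrightarrow E_1)$ with $a=t$ gives $A_a^x$.
\item For the second conjunct, take a fresh parameter $b$ and assume $\exists !b$ and $A_b^x$. By $(\forall E)$ with $\exists !b$ we get $A_b^x\leftrightarrow b=t$, and $(\leftrightarrow E_2)$ gives $b=t$.
\item $(\rightarrow I)$ and $(\forall I)$, discharging $\exists !b$, yield $\forall y(A_y^x\rightarrow y=t)$. This step does not even use $a=t$.
\item $(\land I)$ combines the two conjuncts.
\end{itemize}

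\emph{Right to left.} Assume $A_a^x\land\forall y(A_y^x\rightarrow y=t)$. By $(\land E)$ we have $\forall y(A_y^x\rightarrow y=t)$. Then $(\forall E)$ with $\exists !a$ gives $A_a^x\rightarrow a=t$. The other conjunct $A_a^x$ and $(\rightarrow E)$ give $a=t$. This direction uses only positive free logic, as the paper already noted before stating \rf{FDS'}.

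The main point to watch is existence. $t$ itself need not exist, so we must never instantiate the universal quantifiers at $t$. All instantiations are at the parameters $a$ and $b$, whose existence is supplied by the assumptions discharged by $(\forall I)$. This is exactly why the restricted quantifier in $\forall x(\ldots)$ makes the argument go through without $\exists !t$, unlike \rf{LA}. Everything is intuitionistically valid, so the lemma holds in the intuitionist systems as well.
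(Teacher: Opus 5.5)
Your proof is correct and follows the paper's argument step by step: a fresh $a$ with $\exists !a$ assumed, the two directions of $a=t\leftrightarrow (A_a^x\land\forall y(A_y^x\rightarrow y=t))$ via $(\forall E)$ at $a$, then $(\leftrightarrow I)$ and $(\forall I)$ discharging $\exists !a$. The only difference is that you spell out the derivation of $\forall y(A_y^x\rightarrow y=t)$ with a second fresh parameter $b$, which the paper leaves implicit.
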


\begin{proof}
Assume $\forall y(A_y^x\leftrightarrow y=t)$ and $\exists !a$. (1) Assume $a=t$. Then $A_a^x\leftrightarrow a=t$ by $(\forall E)$, hence $A_a^x$, and also $\forall y(A_y^x\rightarrow y=t)$. Hence $A_a^t\land\forall y(A_y^x\rightarrow y=t)$. (2) Assume $A_a^x\land\forall y(A_y^x\rightarrow y=t)$. Then $A_a^x\rightarrow a=t$ by  $(\land E)$ and $(\forall E)$, so $a=t$ by $(\rightarrow E)$. So by $(\leftrightarrow I)$ $a=t\leftrightarrow (A_a^x\land\forall y(A_y^x\rightarrow y=t))$. Hence $\forall x(x=t\leftrightarrow (A\land\forall y(A_y^x\rightarrow y=t)))$, discharing $\exists !a$. 
\end{proof}

\section{Concluding Philosophical Remarks}
Positive free logic is the preferred logic of many free logicians. It is decidedly unRussellian, while negative free logic remains near to the Russellian paradigm. The proof theory of positive free logic is simpler than that of negative free logic. It is in this sense closer to non-free classical and intuitionist logic in that it merely restricts the rules for the quantifiers. 

Proving a normalisation theorem for $\mathbf{IPF}$ and $\mathbf{CPF}$ was straightforward, but when it comes to definite descriptions, the systems had to be modified by adding the rule $(=I^{nG})$ and normalisation was proved for $\mathbf{IPF}{'}^{\invertediota}$ and $\mathbf{CPF}{'}^{\invertediota}$. This meant that $(\forall E)$ and $(\exists I)$ can be restricted to atomic instantiating terms and furthermore, the $\invertediota$ rules can be restricted to a single occurrence of an $\invertediota$ term where it is displayed in the rules, all other terms being atomic. This may be of philosophical relevance, if the rules governing $\invertediota$ are to define its meaning. 

Considerations from proof-theoretic semantics suggested a misbalance between the introduction and elimination rules for the $\invertediota$ operator that formalises Lambert's minimal theory of definite descriptions in positive free logic, i.e. $\mathbf{IPF}{'}^{\invertediota}$ and $\mathbf{CPF}{'}^{\invertediota}$. The problem is solved by dropping an existence premise from $(\invertediota I)$. 

The rule $(\invertediota I')$ is arguably in perfect harmony with $(\invertediota E_1)$ and $(\invertediota E_2)$. Normalisation theorems were proved for $\mathbf{IPF}^{\invertediota A}$ and $\mathbf{CPF}^{\invertediota A}$ in which $(=I^{nG})$ is replaced by $(\exists i)$, which incidentally drops an existence premise from the former, just as $(\invertediota I')$ drops an existence premise from $(\invertediota I)$. The result is a new theory of definite descriptions conforming to principles of proof-theoretic semantics. 

The matter is not so easily settled, however, and proof-theoretic considerations raise a number of questions. 

It was observed that normalisation theorems are provable for $\mathbf{IPF}{'}^{\invertediota 1}$ and $\mathbf{CPF}{'}^{\invertediota 1}$, arising from $\mathbf{IPF}{'}^{\invertediota}$ and $\mathbf{CPF}{'}^{\invertediota}$ by adding \rf{C} as an axiom. These systems, equivalent to Lambert's $\mathbf{FD1}$, therefore satisfy a necessary condition for proof-theoretic semantics. But the rules exhibit a misbalance, as \rf{C} affords grounds for deriving formulas with $\invertediota$ to which there are no corresponding elimination rules. The question arises whether alternative rules can be formulated that rectify this and rebalance the rules. \rf{C} is provable in $\mathbf{IPF}^{\invertediota A}$ and $\mathbf{CPF}^{\invertediota A}$, but these systems are stronger than $\mathbf{IPF}{'}^{\invertediota 1}$ and $\mathbf{CPF}{'}^{\invertediota 1}$. 

No proof-theoretically satisfactory rules have been found for Lambert's $\mathbf{FD2}$, which formalises a chosen object theory of improper definite descriptions and `non-denoting' names. It was suggested that this is fair enough, as the motivation for this theory is model-theoretic in spirit: there is, after all, a crucial claim concerning reference at its heart.  

Four further theories of definite descriptions were considered. $\mathbf{IPF}^{\invertediota S}$ and $\mathbf{CPF}^{\invertediota S}$ have as their source a serendipitous typo in a paper of Lambert's.  $\mathbf{IPF}^{\invertediota H}$ and $\mathbf{CPF}^{\invertediota H}$ have their origin in a charitable reading of an axiom of Hintikka's. 

In the latter, an existence assumption is dropped from $(\invertediota E_2)$ of $\mathbf{IPF}^{\invertediota A}$ and $\mathbf{CPF}^{\invertediota A}$. The puzzle now is that the collection of rules $(\invertediota I'), (\invertediota E_1), (\invertediota E_2)$ of $\mathbf{IPF}^{\invertediota A}$ and $\mathbf{CPF}^{\invertediota A}$ and the collection $(\invertediota I'), (\invertediota E_1), (\invertediota E_2')$ of $\mathbf{IPF}^{\invertediota H}$ and $\mathbf{CPF}^{\invertediota H}$ appear to be equally harmonious. No significant difference can be discerned, as the dropped existence assumption of $(\invertediota E_2')$ corresponds to a discharged hypothesis above the right hand premise of $(\invertediota I')$ that it is unnecessary to mention explicitly, because it is derivable from the other two assumptions discharged there. 

$(\invertediota I^S)$ of $\mathbf{IPF}^{\invertediota S}$ and $\mathbf{CPF}^{\invertediota S}$ permits a more complex pattern of discharge than $(\invertediota I')$. Yet this does not appear to upset harmony in any way. Adding a corresponding premises to $(\invertediota E_2)$ would amount to adding $t=t$, which is redundant. As before, omissions for redundancies can hardly upset harmony. 

These observations point, I think, to an issue that has been underinvestigated in proof-theoretic semantics: the discharge of assumptions. The question is how to accommodate different patterns of discharge, while premises and conclusions of rules stay the same, within the notion of harmony. As far as I can see there is no literature at all on this issue, although it is of some importance, seeing that classical logic requires rules that discharge assumptions with negation or implication as main operator.  

It is my contention that these considerations open up a fruitful field of enquiry for the proof-theoretic semantics for $\invertediota$ and term-forming operators, and, indeed, proof-theoretic semantics in general. 

\bigskip

\noindent\textbf{Acknowledgements.} As usual when writing on this topic, I must thank Andrzej Indrzejczak for his insights and comments. 

\bigskip

\noindent\textbf{Funding.} The research in this paper was funded by the European Union (ERC, ExtenDD, project number: 101054714). Views and opinions expressed are however those of the author(s) only and do not necessarily reflect those of the European Union or the European Research Council. Neither the European Union nor the granting authority can be held responsible for them.

\bigskip

\setlength{\bibsep}{0pt}
\bibliographystyle{chicago}
\bibliography{KurbisNormalPositiveFreeLogic}

\end{document}